\DeclareMathAlphabet{\altmathcal}{OMS}{cmsy}{m}{n}
\newcolumntype{C}{>{\centering\arraybackslash}X} 
\newtheorem{theorem}{Theorem}[section]
\newtheorem*{theorem*}{Theorem}
\newtheorem{theoremx}{Theorem}
\newtheorem{lemma}[theorem]{Lemma}
\newtheorem{corollary}[theorem]{Corollary}
\newtheorem{proposition}[theorem]{Proposition}
\theoremstyle{definition}
\newtheorem{definition}[theorem]{Definition}
\theoremstyle{remark}
\newtheorem{remark}[theorem]{Remark}
\newcommand{\defin}{\vcentcolon =}
\newcommand{\C}{\mathbb{C}}
\newcommand{\R}{\mathbb{R}}
\newcommand{\N}{\mathbb{N}}
\newcommand{\Z}{\mathbb{Z}}
\newcommand{\I}{I}
\newcommand{\II}{I  \! \! I}
\newcommand{\III}{I \! \! I \! \! I}
\newcommand{\Hyp}{\mathbb{H}}
\newcommand{\dS}{\mathrm{dS}}
\newcommand{\Teich}{\altmathcal{T}}
\newcommand{\length}{\ell}
\newcommand{\CC}{C}
\newcommand{\WP}{\textit{WP}}
\newcommand{\id}{\textit{id}}
\newcommand{\hyp}{\mathfrak{h}}
\newcommand{\conf}{\mathfrak{c}}
\newcommand{\1}{\mathds{1}}
\newcommand{\MesLam}{\altmathcal{ML}}
\newcommand{\QF}{\altmathcal{QF}}
\newcommand{\mappa}[3]{#1 \colon #2 \rightarrow #3}
\newcommand{\hsk}{\hskip0pt}
\newcommand{\RVol}{V_R}
\DeclarePairedDelimiterX{\scal}[2]{\langle}{\rangle}{#1, #2}
\DeclarePairedDelimiterX{\scall}[2]{(}{)}{#1, #2}
\DeclarePairedDelimiter{\set}{\{}{\}}
\DeclareMathOperator{\arcsinh}{arcsinh}
\DeclareMathOperator*{\supess}{ess~sup}
\DeclareMathOperator*{\injrad}{injrad}
\DeclareMathOperator{\Vol}{Vol}
\DeclareMathOperator{\Proj}{P}
\DeclareMathOperator{\SL}{SL}
\DeclareMathOperator{\ext}{ext}
\DeclareMathOperator{\grd}{grad}
\DeclareMathOperator{\sgr}{sgr}
\DeclareMathOperator{\dvol}{dvol}
\title[The dual volume of quasi-Fuchsian manifolds and the WP-distance]{The dual volume of quasi-Fuchsian manifolds and the Weil-Petersson distance}
\author{Filippo Mazzoli}
\thanks{Supported by the Luxembourg National Research Fund PRIDE15/10949314/GSM/Wiese.}
\date{\today}
\address{Department of Mathematics, 
	University of Virginia
	Charlottesville, VA, 
	United States}
\email{filippomazzoli@me.com}
\begin{document}

\begin{abstract}
	\noindent Making use of the dual Bonahon-\hsk Schl\"afli formula, we prove that the dual volume of the convex core of a quasi-\hsk Fuchsian manifold $M$ is bounded by an explicit constant, depending only on the topology of $M$, times the Weil-\hsk Petersson distance between the hyperbolic structures on the upper and lower boundary components of the convex core of $M$. 
\end{abstract}
	
\maketitle

\section*{Introduction}

Let $\Sigma$ be a closed oriented surface of genus $g \geq 2$. The Teichm\"uller space of $\Sigma$, denoted by $\Teich(\Sigma)$, can be interpreted as the space of isotopy classes of either conformal structures or hyperbolic metrics on $\Sigma$, thanks to the uniformization theorem. The Weil-\hsk Petersson K\"ahler structure, with its induced distance $d_\WP$, naturally arises from the interplay of these two interpretations. As described by \citet{brock2003the_weil}, the coarse geometry of the Weil-\hsk Petersson distance turns out to be related to the growth of the volume of the convex core of quasi-\hsk Fuchsian manifolds. More precisely, in \cite{brock2003the_weil} the author proved the existence of two constants $K_1 > 1$ and $K_2 > 0$, depending only on the topology of $\Sigma$, such that every quasi-\hsk Fuchsian manifold $M$ satisfies
\begin{equation} \label{eq:brock_bounds}
K_1^{-1} d_\WP(c^+(M),c^-(M)) - K_2 \leq \Vol(\CC M) \leq K_1 \ d_\WP(c^+(M),c^-(M)) + K_2 .
\end{equation}
Inspired by this phenomenon, the aim of this paper is to determine an explicit control from above of the \emph{dual volume} of the convex core of a quasi-Fuchsian manifold $M$ in terms of the Weil-\hsk Petersson distance between the \emph{hyperbolic metrics} on the boundary of its convex core, in analogy to what has been done by \citet{schlenker2013renormalized} for the \emph{renormalized volume} of $M$ and the Weil-Petersson distance between its \emph{conformal structures} at infinity.

In order to be more precise, we need to introduce some notation. If $M$ is a quasi-\hsk Fuchsian manifold homeomorphic to $\Sigma \times \R$, then $\CC M$ will denote its convex core. When $M$ is not Fuchsian, the subset $\CC M$ is homeomorphic to the product of $\Sigma$ with a compact interval of $\R$ with non-\hsk empty interior. Its boundary components $\partial^\pm \CC M$, are locally convex pleated surfaces with hyperbolic metrics $m^+(M)$, $m^-(M) \in \Teich(\Sigma)$ and bending measured laminations $\mu^+$, $\mu^-$. The manifold $M$ can be extended at infinity by adding two surfaces $\partial_\infty^\pm M$, so that $M \cup \partial_\infty^\pm M$ is homeomorphic to $\Sigma \times [- \infty, + \infty]$. The surfaces $\partial_\infty^\pm M$ are endowed with natural complex structures $c^\pm(M)$, coming from the conformal action of the fundamental group of $M$ on the boundary at infinity of the hyperbolic space $\Hyp^3$. By the works of Ahlfors and Bers (see \cite{bers1960simultaneous}, \cite{ahlfors1960riemanns}), the data of $c^\pm(M)$ uniquely determine the hyperbolic manifold $M$, and any couple of conformal structures can be realized in this way.

The notion of \emph{dual volume} arises from the polar correspondence between the \emph{hyperbolic $3$-\hsk space} $\Hyp^3$ and the \emph{de Sitter $3$-\hsk space} $\dS^3$ (see \cite{rivin1986phd}, \cite{schlenker2002hypersurfaces} for details), In general, if $N$ is a convex subset with regular boundary of a quasi-\hsk Fuchsian (or, in general, co-\hsk compact hyperbolic) manifold $M$, we set the \emph{dual volume} of $N$ to be
\[
\Vol^*(N) \defin \Vol(N) - \frac{1}{2} \int_{\partial N} H \dd{a} ,
\]
where $H$ is the trace of the shape operator of $\partial N$ with respect the interior normal vector field. Even if the convex core does not have regular boundary, a simple approximation argument shows that it makes sense to define the dual volume of the convex core of $M$ as $V_\CC^*(M) \defin \Vol(\CC M) - \frac{1}{2} L_\mu(m)$, where $\mu$ is the bending measured lamination of $\partial \CC M$ and $m \in \Teich(\partial \CC M)$ is its hyperbolic metric. As we deform the quasi-\hsk Fuchsian structure $(M_t)_t$, the variation of the dual volume of the convex core is described by the \emph{dual Bonahon-\hsk Schl{\"a}fli formula}, which asserts that
\[
\dd{V_\CC^*}(\dot{M}) = - \frac{1}{2} \dd{L_\mu}(\dot{m}) .
\]
Here $\dot{m}$ denotes the derivative of the hyperbolic metrics $m_t$ on the boundary of the convex cores $\CC M_t$, and $L_\mu$ stands for the hyperbolic length function of $\mu$ over the Teichm\"uller space of $\partial \CC M$. An elementary proof of this relation, originally showed in \cite{krasnov2009symplectic}, can be found in \cite{mazzoli2021the_dual}.

The dual Bonahon-\hsk Schl\"afli formula, together with the properties of the bending measured lamination, allows us to bound uniformly the variation of $V_\CC^*(M)$ with respect to $\dot{m}$. The aim of this paper is to prove the following statement:
\begin{theoremx}\label{thm:bound_dual_volume_WP}
	There exists a universal constant $C > 0$ such that, for every quasi-\hsk Fuchsian manifold $M$ homeomorphic to $\Sigma \times \R$,  we have
	\[
	\abs{V_\CC^*(M)} \leq C \ (g - 1)^{1/2} \ d_\WP(m^-(M),m^+(M)) ,
	\]
	with $C \approx 7.3459$.
\end{theoremx}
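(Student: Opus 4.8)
The plan is to integrate the dual Bonahon--Schl\"afli formula along a path of quasi-Fuchsian manifolds joining the Fuchsian locus to $M$, and then to control the integrand by the Weil--Petersson length of the path swept out by the induced metrics on the boundary of the convex core. The starting observation is that the dual volume vanishes on the Fuchsian locus: if $M_0$ is Fuchsian then $\CC M_0$ is a totally geodesic surface, so $\Vol(\CC M_0) = 0$ and the bending lamination is trivial, whence $\Vol^*(\CC M_0) = 0$; moreover the two induced metrics $m^+(M_0)$ and $m^-(M_0)$ coincide. Integrating the dual Bonahon--Schl\"afli formula from such an $M_0$ out to $M$ along a smooth path $(M_s)_{s \in [0,1]}$ then gives $\Vol^*(\CC M) = -\tfrac12 \int_0^1 \dd{(L_{\mu_s})}(\dot m_s)\,\dd{s}$, where $\mu_s$ is the bending lamination of $\partial \CC M_s$ and $\dot m_s$ the derivative of the induced metric.

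The path should be chosen so that its total Weil--Petersson length reproduces exactly $d_\WP(m^-(M),m^+(M))$. To this end, let $s \mapsto g_s$ be the Weil--Petersson geodesic in $\Teich(\Sigma)$ from $m^-(M)$ to $m^+(M)$, let $m_0 = g_{1/2}$ be its midpoint, and let $M_0$ be the Fuchsian manifold of hyperbolic metric $m_0$. Invoking the parametrization of quasi-Fuchsian space by the induced metrics on the boundary of the convex core (and, if needed, approximating $\mu^\pm$ by laminations supported on finitely many closed geodesics), I would take a path $(M_s)_{s \in [0,1]}$ from $M_0$ to $M$ whose upper induced metric $m^+_s$ traces the Weil--Petersson geodesic from $m_0$ to $m^+(M)$ and whose lower induced metric $m^-_s$ traces the Weil--Petersson geodesic from $m_0$ to $m^-(M)$. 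Since $m_0$ lies on the geodesic segment between $m^-(M)$ and $m^+(M)$, the additivity of length along geodesics yields $\int_0^1 (\norm{\dot m^+_s}_\WP + \norm{\dot m^-_s}_\WP)\,\dd{s} = d_\WP(m_0,m^+(M)) + d_\WP(m_0,m^-(M)) = d_\WP(m^-(M),m^+(M))$. Splitting $L_{\mu_s} = L_{\mu^+_s} + L_{\mu^-_s}$ over the two boundary components, the integral identity becomes
\[
\abs{\Vol^*(\CC M)} \;\le\; \frac12\int_0^1 \left( \abs{\dd{(L_{\mu^+_s})}(\dot m^+_s)} + \abs{\dd{(L_{\mu^-_s})}(\dot m^-_s)} \right) \dd{s} .
\]

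The heart of the argument, and the step I expect to be the main obstacle, is a pointwise estimate of the form $\abs{\dd{(L_\mu)}(\dot m)} \le 2C\,(g-1)^{1/2}\,\norm{\dot m}_\WP$, valid for the bending lamination $\mu$ of a single convex core boundary component and any Weil--Petersson tangent vector $\dot m$; feeding it into the displayed inequality and using the length computation above produces exactly the asserted bound. By Cauchy--Schwarz this reduces to estimating the Weil--Petersson dual norm of the covector $\dd{(L_\mu)}$ by $2C\,(g-1)^{1/2}$. I would obtain this by combining three ingredients: (i) a control on the bending lamination coming from the convexity of the core, bounding its total bending by a universal multiple of $(g-1)$ through a Gauss--Bonnet argument; (ii) the first-variation (Gardiner-type) formula for the length of a measured lamination, which represents $\dd{(L_\mu)}$ as pairing against an explicit quadratic differential attached to $\mu$; and (iii) a pointwise comparison of this quadratic differential with the transverse measure of $\mu$, integrated over $\Sigma$, whose total hyperbolic area is $4\pi(g-1)$ by Gauss--Bonnet. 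The area factor is precisely what yields the exponent $(g-1)^{1/2}$, while optimizing the pointwise comparison in (iii) is where the explicit value $C \approx 7.3459$ is pinned down; making this optimization sharp, and handling laminations with non-atomic transverse measure, is the delicate part of the proof.
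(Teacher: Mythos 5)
Your overall skeleton---vanishing of $\Vol^*$ on the Fuchsian locus, integration of the dual Bonahon--Schl\"afli formula along a path out to $M$, midpoint bookkeeping so that the total Weil--Petersson length traversed is exactly $d_\WP(m^-(M),m^+(M))$, and a per-component estimate $\frac12\abs{\dd(L_{\mu^\pm})(\dot m^\pm)} \leq C(g-1)^{1/2}\norm{\dot m^\pm}_\WP$---is indeed the skeleton of the paper's argument. But the step where you ``invoke the parametrization of quasi-Fuchsian space by the induced metrics on the boundary of the convex core'' is a genuine gap, not a technicality: that parametrization is precisely Thurston's conjecture, which is open. The map $\Psi \colon \QF(\Sigma) \to \Teich_h(\Sigma)^2$, $M \mapsto (m^+,m^-)$, is only known to be $\mathscr{C}^1$ (Bonahon); neither its injectivity nor the existence of a continuous (let alone $\mathscr{C}^1$) inverse is known, so there is no way to produce a smooth path $(M_s)_s$ of quasi-Fuchsian manifolds whose convex-core boundary metrics trace two prescribed Weil--Petersson geodesics. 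Your parenthetical fallback of approximating $\mu^\pm$ by rational laminations does not repair this: that concerns the bending data, not the metric map. The paper flags exactly this obstruction and devotes all of Section \ref{section:dual_volume_WP_distance} to circumventing it: one replaces $\partial \CC M$ by the constant-curvature surfaces $\Sigma_k$, $k \in (-1,0)$, for which the analogous map $\Psi_k$ \emph{is} a $\mathscr{C}^1$-diffeomorphism (Theorem \ref{thm:ksurfaces_homeo}, resting on Labourie's foliation and Schlenker's rigidity for convex boundaries), runs your integration argument at each fixed $k$ (where the Fuchsian endpoint and the geodesic bookkeeping work verbatim), and then lets $k \to -1$. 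The genuinely delicate point is then the uniform convergence of differentials $\dd{V_k^{*,+}} \circ \Psi_k \to -\frac{1}{2}\dd{(L_{\mu^+})}$ (Proposition \ref{prop:uniform_convergence_diff}), which the paper proves via the dual differential Schl\"afli formula, landslides, and the convergence of third fundamental forms of $k$-surfaces to the bending measure. None of this machinery is optional given the current state of knowledge, and your proposal contains no substitute for it.

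A secondary inaccuracy lies in your ingredient (i): the universal control of the bending data is not ``a Gauss--Bonnet argument.'' Gauss--Bonnet alone gives nothing---for an abstract pair $(m,\mu)$ with $\Sigma$ of fixed genus, $L_\mu(m)$ is unbounded. What is actually used is that the pleated planes bounding the lifted convex core are \emph{embedded} in $\Hyp^3$: this yields $L_\mu(m) \leq 6\pi\abs{\chi(\Sigma)}$ (Bridgeman--Brock--Bromberg, ultimately via Nehari's bound, Theorem \ref{thm:bound_length}) and the local transverse-measure bound $D(m,\mu,r) \leq 4r\arccos(-\sinh r)$ (Yarmola, Corollary \ref{cor:bound_total_length}). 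These two bounds, fed through H\"older with $p=q=2$ into the $L^2$ estimate on $\dd{(L_\mu)}$---which in the paper rests on the mean-value property of holomorphic quadratic differentials together with a covering and Fubini argument (Proposition \ref{prop:bound_differential_length})---are what produce both the exponent $(g-1)^{1/2}$ and the explicit constant $C = K(2)/\sqrt{2} \approx 7.3459$. Your item (iii) gestures in this direction but leaves unspecified exactly the mechanism that pins the constant down.
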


The dual volume and the hyperbolic volume of the convex core differ by the term $ \frac{1}{2} L_\mu(m)$, which is bounded by $6 \pi \abs{\chi(\Sigma)}$ (see \cite{bridgeman_brock_bromberg2017}). By the work of \citet{sullivan1981travaux} (see also Epstein-Marden \cite{epstein2006fundamentals} and Bridgeman-Canary-Yarmola \cite{yarmola2016improved}), the Teichm\"uller distance between the conformal structures at infinity $c^\pm(M)$ and the hyperbolic structures $m^\pm(M)$ is bounded from above by an explicit universal constant. Since the Weil-Petersson distance is controlled from above by $\sqrt{2 \pi \abs{\chi(\Sigma)}}$ times the Teichm\"uller distance (see \citet{linch1974a_comparison}), the combination of Theorem \ref{thm:bound_dual_volume_WP} with the above mentioned results provides an alternative proof of Brock's upper bound in \eqref{eq:brock_bounds} and it produces explicit additive and multiplicative constants, with a fairly simple argument.

A similar strategy has been developed by \citet{schlenker2013renormalized} to study the behavior of the \emph{renormalized volume} of quasi-\hsk Fuchsian manifolds. The notion of renormalized volume has been initially introduced by \citet{witten1998antidesitter} and independently by \citet{gubser1998gaugetheory} in the context of conformally compact Einstein manifolds, motivated by the AdS/CFT correspondence of string theory proposed by \citet{maldacena1998thelarge}. Later \citet{krasnov2008renormalized} defined such quantity for convex co-\hsk compact hyperbolic $3$-\hsk manifolds, emphasizing its relations with the geometry of the Teichm\"uller space (compare also with the work of \citet{zongraf1987ontheuniformization}). The key ingredients in the work of \citet{schlenker2013renormalized} are the variation formula of the renormalized volume $\RVol(M)$ and the Nehari's bound of the norm of the Schwarzian derivative of the complex projective structures at infinity of $\partial_\infty M$. In particular, the author showed that, for every quasi-Fuchsian manifold $M$
\begin{equation} \label{eq:bound_renorm_WP}
\RVol(M) \leq 3 \sqrt{\pi} (g - 1)^{1/2} \ d_\WP(c^+(M),c^-(M)) .
\end{equation}
We remark that the multiplicative constant $C$ appearing in our statement is larger than the one obtained using the renormalized volume, $3 \sqrt{\pi} \approx 5.3174 < 7.3459 \approx C$. Therefore, the inequality \eqref{eq:bound_renorm_WP} is more efficient in terms of coarse estimates.

Nevertheless, Theorem \ref{thm:bound_dual_volume_WP} carries more information than its implications concerning the coarse Weil-\hsk Petersson geometry, in particular when we consider quasi-\hsk Fuchsian structures that are close to the Fuchsian locus. In this case, Theorem \ref{thm:bound_dual_volume_WP} and the inequality \eqref{eq:bound_renorm_WP} furnish complementary insights, since they involve the Weil-\hsk Petersson distance between the hyperbolic structures on the boundary of the convex core, on one side, and the conformal structures at infinity on the other. Moreover, Proposition \ref{prop:bound_differential_length} and its application for the bound of the dual volume show that the multiplicative constant in Theorem \ref{thm:bound_dual_volume_WP} can be improved if we have a better control of $L_\mu(m)$ than $L_\mu(m) \leq 6 \pi \abs{\chi(\Sigma)}$ (from \cite{bridgeman_brock_bromberg2017}), exactly as the inequality \eqref{eq:bound_renorm_WP} can be improved if we have a better control of the $L^\infty$-\hsk norm of the Schwarzian at infinity than the Nehari's bound. 

We finally mention that, carrying on the analogy between the picture "at the convex core" and "at infinity" by \citet{schlenker2017notes}, our result fits well into the comparison of the two descriptions of the space of quasi-\hsk Fuchsian structures, as summarized in the following table:

\begin{table}[ht]
	\centering
	\def\arraystretch{1.1}
	\begin{tabular}{|c|c|}
		\hline
		On $\partial \CC M$ & On $\partial_\infty M$ \\
		\hline\hline
		Induced metrics $m^\pm$ & Conformal structures $c^\pm$ \\
		\hline
		Thurston's conjecture & Bers' Simultaneous \\
		on prescribing $m^\pm$ & Uniformization Theorem \\
		\hline 
		Bending measured lamination $\mu$ & Measured foliation $\altmathcal{F}$ \\
		\hline
		Hyperbolic length $L_\mu(m)$ & Extremal length $\ext_\altmathcal{F}(c)$ \\
		\hline
		Dual volume $V_\CC^*(M)$ & Renormalized volume $\RVol(M)$ \\
		\hline
		Dual Bonahon-Schl\"afli formula & \cite[Theorem~1.2]{schlenker2017notes}  \\
		$\dd V_\CC^* = -\frac{1}{2} \dd{L_\mu}(\dot m)$ & $\dd \RVol = -\frac{1}{2} \dd{\ext_\altmathcal{F}}(\dot c)$ \\
		\hline
		Bound on $L_\mu(m)$ \cite{bridgeman_brock_bromberg2017} & \cite[Theorem~1.4]{schlenker2017notes} \\
		$L_{\mu}(m)\leq 6\pi|\chi(S)|$ & $\ext_{\altmathcal{F}}(c) \leq 3\pi |\chi(S)|$ \\
		\hline
		Bound of $V_\CC^*$ with $d_\WP(m^+,m^-)$ & Bound of $\RVol$ with $d_\WP(c^+,c^-)$ \\
		Theorem \ref{thm:bound_dual_volume_WP} & Inequality \eqref{eq:bound_renorm_WP} \\
		\hline
	\end{tabular}
\end{table}

\subsection*{Outline of the paper} 
In Section \ref{section:preliminaries} we recall the definition of Teichm\"uller space $\Teich(\Sigma)$ as deformation space of Riemann surface structures, and of its tangent and cotangent bundles via Beltrami differentials and holomorphic quadratic differentials. Then, following \cite{tromba2012teichmuller}, we introduce the description of $\Teich(\Sigma)$ as the space of isotopy classes of hyperbolic metrics, and of its tangent bundle using traceless and divergence free (also called transverse traceless) symmetric tensors. The Section ends with a simple Lemma describing the relation between the two equivalent interpretations and between their norms.
	
Section \ref{section:bound_differential_length} is devoted to the proof of Proposition \ref{prop:bound_differential_length}, in which we produce a uniform bound of the differential of $\mappa{L_\mu}{\Teich(\Sigma)}{\R}$, the hyperbolic length function of a measured lamination over the Teichm\"uller space. This is the main "quantitative" ingredient for the proof of Theorem \ref{thm:bound_dual_volume_WP}. The proof uses Tromba's description of $T \Teich(\Sigma)$ via transverse traceless tensors: we represent a variation of hyperbolic metrics $\dot{m}$ as the real part of a holomorphic quadratic differential $\Phi$. Using standard properties of holomorphic functions, the pointwise norm of $\Phi$ at $x$ can be bounded by the $L^\emph{p}$-\hsk norm of $\Phi$ over some embedded geodesic ball in $\Sigma$ centered at $x$. The variation of $L_\mu$ can be expressed as an integral over the support of $\mu$ of the product of the variation of the length measure of $\dot{m}$ times the tranverse measure of $\mu$. Then the result will follow by using the pointwise estimation and a Fubini's exchange of integration over a suitable finite cover of $\Sigma$.

In Section \ref{section:dual_volume} we obtain a uniform control of the differential of $V_\CC^*$ in terms of the norm of the variation of the hyperbolic metrics on $\partial \CC M$. To do so, we will apply the works of \citet{yarmola2016improved} and \citet{bridgeman_brock_bromberg2017}, which give universal controls of the bending measure of the convex core. These results are to the dual volume as the Nehari's bound of the norm of the Schwarzian derivative is to the renormalized volume (the bounds obtained in \cite{bridgeman_brock_bromberg2017} are actually proved \emph{using} Nehari's bound). The dual Bonahon-\hsk Schl\"afli formula relates the variation of $V_\CC^*$ with the differential of the length of the bending measured lamination, and the mentioned universal bounds combined with Proposition \ref{prop:bound_differential_length} will produce the desired control of $\dd{V_\CC^*}$ (see Corollary \ref{cor:bound_diff_dual_vol}).

In Section \ref{section:dual_volume_WP_distance} we will finally give a proof of Theorem \ref{thm:bound_dual_volume_WP}. Contrary to what happens for the conformal structures at infinity, the hyperbolic structures on $\partial \CC M$ are only conjecturally thought to give a parametrization of the space of quasi-Fuchsian manifolds. Because of this, proving Theorem \ref{thm:bound_dual_volume_WP} from Corollary \ref{cor:bound_diff_dual_vol} is not as immediate as it is for the renormalized volume using its variation formula. Our procedure to overcome this difficulty passes through the foliation of hyperbolic ends by constant Gaussian curvature surfaces $\Sigma_k$, with $k \in (-1,0)$, and the notion of landslide, which is a "smoother" analogue of earthquakes between hyperbolic metrics on $\Sigma$ introduced by \citet{bonsante2013a_cyclic} (see also \cite{bonsante2015a_cyclic}). By the work of \citet{schlenker2006hyperbolic} and \citet{labourie1991probleme}, the data of the metrics on the surfaces $\Sigma_k$ parametrize the space of quasi-Fuchsian manifolds. Therefore, the strategy will roughly be to:
\begin{enumerate}
	\item approximate the dual volume of the convex core $\CC M$ by the dual volume $V_k^*$ of the region enclosed by the $k$-surfaces of $M$;
	\item prove that the differentials of the functions $V_k^*$ converge to the differential of $V_\CC^*$ as $k$ goes to $-1$, i. e. as the surfaces $\Sigma_k$ get closer to the convex core $\CC M$;
	\item use the parametrization result for the metrics of $\Sigma_k$ to deduce the statement of Theorem \ref{thm:bound_dual_volume_WP} via an approximation argument.
\end{enumerate}
For point $(2)$, which is the most delicate part of our argument, we will highlight a connection between the differential of the functions $V_k^*$ and the infinitesimal smooth grafting, introduced in \cite{bonsante2013a_cyclic}. As described by \citet{mcmullen1998complex}, the earthquake map can be complexified using the notion of grafting along a measured lamination. In the same way the landslide admits a complex extension via the \emph{smooth grafting map}. Moreover, the complex earthquake can be actually recovered by a suitable limit of complex landslides. Using this convergence procedure, we are able to show that $\dd{V_\CC^*}$ is the limit of the differentials $\dd{V_k^*}$, in the sense described by Proposition \ref{prop:uniform_convergence_diff}. The rest of the proof of Theorem \ref{thm:bound_dual_volume_WP} will be an elementary application of the results from the previous section, similarly to what done in \cite{schlenker2013renormalized} with the renormalized volume.

\subsection*{Acknowledgments}
I am very grateful to Jean-Marc Schlenker for introducing me to this problem and for his help throughout this work. Moreover, I would like to thank the referee for the careful reading and for several useful comments that helped clarify the exposition. 

\section{Preliminaries} \label{section:preliminaries}

Let $\Sigma$ be an oriented closed surface of genus $g \geq 2$. Two Riemannian metrics $g$, $g'$ on $\Sigma$ are said to be \emph{conformally equivalent} if there exists a smooth function $\mappa{u}{\Sigma}{\R}$ such that $g = e^{2 u} g'$. A \emph{Riemann surface structure} on $\Sigma$ is a couple $X = (\Sigma,c)$, where $c$ is a conformal class of Riemannian metrics of $\Sigma$. A \emph{hyperbolic structure} on $\Sigma$ is the datum of a Riemannian metric $h$ with constant Gaussian curvature equal to $-1$.

The \emph{Teichm\"uller space} of $\Sigma$, denoted by $\Teich(\Sigma)$, is the space of isotopy classes of conformal structures over the surface $\Sigma$. Thanks to the uniformization theorem, the Teichm\"uller space can be considered equivalently as the space of isotopy classes of hyperbolic metrics on $\Sigma$. We will write $\Teich^\conf(\Sigma)$ ($\conf$ for \emph{conformal}) when we want to emphasize the first interpretation, and $\Teich^\hyp(\Sigma)$ ($\hyp$ for \emph{hyperbolic}) in latter case.

In the following, we will recall the definition of the Weil-Petersson Riemannian metric on the Teichm\"uller space. Since the literature usually agrees on its definition only up to multiplicative constant, we will spend some time in describing the setting we will work with, mainly because we will be interested in producing explicit bounds of our geometric quantities.

Let $X$ be a Riemann structure on $\Sigma$. A \emph{Beltrami differential} on $X$ is a $(1,1)$-\hsk tensor $\nu$ that can be expressed in local coordinates as $\nu = n \ \partial_z \otimes \dd{\bar{z}}$, where $n$ is a measurable complex-valued function. If $h = \rho \abs{\dd{z}}^2$ is the unique hyperbolic metric in the conformal class $c$, then for any $q \in [1,\infty)$ we define the $L^q$-\hsk norm of the Beltrami differential $\nu = n \ \partial_z \otimes \dd{\bar{z}}$ to be
\[
\norm{\nu}_{B, q} \defin \left( \int_X \abs{n}^q \rho \dd{x} \dd{y} \right)^{1/q} .
\]
When $q = \infty$, we set $\norm{\nu}_{B,\infty} \defin \supess_\Sigma \abs{n}$. We will denote by $B(X)$ the space of Beltrami differentials of $X$ with finite $L^\infty$-\hsk norm. Observe that the norm $\norm{\cdot}_{B, 2}$ on $B(X)$ is induced by the hermitian scalar product
\[
\scal{\nu}{\mu}_{B, 2} = \int_X \bar{n} m \ \rho \dd{x} \dd{y} ,
\]
where $\nu = n \ \partial_z \otimes \dd{\bar{z}}$ and $\mu = m \ \partial_z \otimes \dd{\bar{z}}$.

A holomorphic quadratic differential on $X$ is a symmetric $2$-\hsk covariant tensor that can be locally written as $\Phi = \phi \dd{z}^2$, where $\phi$ is holomorphic. In analogy to what was done above, for every $p \in [1,\infty)$ we define the $L^p$-\hsk norm of $\Phi$ to be
\[
\norm{\Phi}_{Q,p} \defin \left( \int_\Sigma \frac{\abs{\phi}^p}{\rho^{p -1}} \dd{x} \dd{y} \right)^{1/p} .
\]
When $p = \infty$, we set $\norm{\Phi}_{Q,\infty} \defin \supess_\Sigma \abs{\phi}/\rho$. When $p = 2$, the norm $\norm{\cdot}_{Q,2}$ is induced by a scalar product, defined as follows:
\[
\scal{\Phi}{\Psi}_{Q,2}  \defin \int_\Sigma \frac{\phi \overline{\psi}}{\rho} \dd{x} \dd{y} .
\]
There is a natural pairing between the space of bounded Beltrami differentials $B(X)$ and the space of holomorphic quadratic differentials $Q(X)$: given a Beltrami differential $\nu = n \ \partial_z \otimes \dd{\bar{z}}$ and a holomorphic quadratic differential $\Phi = \phi \dd{z}^2$, we define
\[
\scall{\Phi}{\nu} \defin \int_X \phi n \dd{x}\dd{y} .
\]
A Beltrami differential $\nu \in B(X)$ is \emph{harmonic} if there exists a holomorphic quadratic differential $\Phi = \phi \dd{z}^2$ such that $\nu = \bar{\phi} / \rho \ \partial_z \otimes \dd{\bar{z}}$. We denote by $B_h(X)$ the space of harmonic Beltrami differentials on $X$.

Let $N(X)$ be the subspace of $B(X)$ of those Beltrami differentials $\nu$ verifying $\scall{\Phi}{\nu} = 0$ for every $\Phi \in Q(X)$. As described in \cite{gardiner2000quasiconformal}, the space $B_h(\Sigma)$ and $N(X)$ are in direct sum, and the quotient of $B(X)$ by the subspace $N(X)$ identifies with the tangent space to the Teichm\"uller space $T_X \Teich^\conf(\Sigma)$ (here we denote by $X$ the isotopy class of the conformal structure, with abuse). Moreover, the pairing $\scall{\cdot}{\cdot}$ determines a natural isomorphism between the dual of $T_X \Teich^\conf(\Sigma)$ and the space of holomorphic quadratic differentials $Q(X)$, which is consequently identified with the cotangent space $T_X^* \Teich^\conf(\Sigma)$. The scalar product on $T_X \Teich^\conf(\Sigma)$ induced by $\Re \scal{\cdot}{\cdot}_{B,2}$ defines the \emph{Weil-\hsk Petersson metric} of the Teichm\"uller space $\Teich^\conf(\Sigma)$, and $\Re \scal{\cdot}{\cdot}_{Q,2}$ determines the corresponding metric on the cotangent bundle to Teichm\"uller space.

\begin{lemma} \label{lem:L2_dual_norm}
For every $\Phi \in Q(X)$ we have:
\[
\norm{\Phi}_{Q,2} = \sup_{\nu \in B(X) \setminus \set{0}} \frac{\abs{\scall{\Phi}{\nu}}}{\, \, \, \, \, \,\norm{\nu}_{B,2}} = \sup_{\nu \in B_h(X) \setminus \set{0}} \frac{\abs{\scall{\Phi}{\nu}}}{\, \, \, \, \, \, \norm{\nu}_{B,2}} .
\]
\begin{proof}
	By the Cauchy-\hsk Schwarz inequality we have $\abs{\scall{\Phi}{\nu}} \leq \norm{\Phi}_{Q,2} \norm{\nu}_{B,2}$, with equality realized by the harmonic Beltrami differential $\nu_\Phi$, which satisfies $(\Phi,\nu_\Phi) = \norm{\Phi}_{Q,2}^2 = \norm{\nu_\Phi}_{B,2}^2$. Therefore we get:
	\[
	\norm{\Phi}_{Q,2} \geq \sup_{\nu \in B(X) \setminus \set{0}} \frac{\abs{\scall{\Phi}{\nu}}}{\, \, \, \, \, \,\norm{\nu}_{B,2}} \geq \sup_{\nu \in B_h(X) \setminus \set{0}} \frac{\abs{\scall{\Phi}{\nu}}}{\, \, \, \, \, \, \norm{\nu}_{B,2}} \geq \norm{\Phi}_{Q,2} .
	\]
	The first inequality holds because of Cauchy-\hsk Schwarz, the second one because $B_h(X) \subset B(X)$, and the last one by taking $\nu = \nu_\Phi$.
\end{proof}
\end{lemma}

We recall now the Riemannian description of the Teichm\"uller space as developed in \cite{tromba2012teichmuller}. Let $S^{2,0} \Sigma$ be the bundle of $2$-\hsk covariant symmetric tensors on $\Sigma$, and let $\Gamma(S^{2,0} \Sigma)$ denote the space of its smooth sections, which is an infinite dimensional vector space. The space $\altmathcal{M}$ of smooth Riemannian metrics on $\Sigma$ identifies with an open cone inside $\Gamma(S^{2,0} \Sigma)$. Therefore, given any Riemannian metric $g$ on $\Sigma$, the tangent space $T_g \altmathcal{M}$ is canonically isomorphic to $\Gamma(S^{2,0} \Sigma)$. The metric $g$ determines a scalar product on $T_g \altmathcal{M}$, which can be expressed as $\scall{h}{k}_g \defin g^{i p} g^{j q} h_{i j} k_{p q}$, for $h$, $k$ in $\Gamma(T^{2,0} \Sigma)$. The norm induced by this scalar product will be denoted by $\norm{h}_g^2 \defin \scal{h}{h}_g$. Given $h \in \Gamma(T^{2,0} \Sigma)$, we define the $g$-\hsk divergence of $h$ to be the $1$-\hsk form $\delta_g h(V) \defin \trace_g(\nabla_* h)(*,V)$, for any $V$ tangent vector field to $\Sigma$. Now we set
\[
S_{tt}(\Sigma,g) \defin \set{h \in \Gamma(T^{2,0} \Sigma) \mid \text{$h$ is symmetric, $g$-traceless and $\delta_g h = 0$}} .
\]
An element of $S_{tt}(\Sigma,g)$ is usually called a \emph{tranverse traceless} tensor (with respect to the metric $g$). As shown in \cite{tromba2012teichmuller}, every element of $S_{tt}(\Sigma,g)$ can be written (uniquely) as the real part of a holomorphic quadratic differential $\Phi \in Q(\Sigma,[g])$, and vice versa for every $\Phi$, the tensor $\Re \Phi$ belongs to $S_{tt}(\Sigma,g)$. In particular, the space $S_{tt}(\Sigma,g)$ depends only on the conformal class of the metric $g$. If $g$ is a hyperbolic metric, then $S_{tt}(\Sigma,g)$ is tangent to the space $\altmathcal{M}^{-1}$ of hyperbolic metrics on $\Sigma$, and it is transverse to the orbit of $g$ by the action of the group of diffeomorphisms isotopic to the identity. Therefore, the tangent space of the Teichm\"uller space at the isotopy class of $g$ can be identified with $S_{tt}(\Sigma,g)$.

For any open set $\Omega \subseteq \Sigma$ and any $p \in [1,\infty)$, the Fischer-Tromba $p$-\hsk norm of $h \in S_{tt}(\Sigma,g)$ is defined as
\[
\norm{h}_{FT, L^p(\Omega)} \defin \left( \int_\Omega \norm{h}^p_g \dvol_g \right)^{1/p} ,
\]
where $\dvol_g$ is the area form induced by $g$. When $p = \infty$, we set $\norm{h}_{FT, L^\infty(\Omega)} \defin \sup_\Omega \norm{h}_g$. If $\Omega = \Sigma$, we simply write $\norm{\cdot}_{FT,p}$.

Let now $m$ be a point of the Teichm\"uller space, and let $g$ be a hyperbolic metric in the equivalence class $m$, with associated Riemann surface structure $X$.

\begin{lemma} \label{lemma:relations_between_norms}
The vector spaces $B_h(X)$ and $S_{tt}(\Sigma,g)$ are identified to $T_m \Teich(\Sigma)$ through the linear isomorphism
\[
\begin{matrix}
B_h(X) & \longrightarrow & S_{tt}(\Sigma,g) \\
\nu_\Phi & \longmapsto & 2 \Re \Phi .
\end{matrix}
\]
Moreover, for every $\Phi \in Q(X)$ we have
\[
\norm{\nu_\Phi}_{B,q} = \frac{1}{2 \sqrt{2}} \norm{2 \Re \Phi}_{FT,q} .
\]

\begin{proof}
Let $g_t = \rho_t \abs{\dd{z_t}}^2$ be a smooth $1$-\hsk parameter family of Riemannian metrics on $\Sigma$, with $g_0 = g$, and let $\Phi = \phi \dd{z_0}^2$ be a holomorphic quadratic differential on the Riemann surface $X_0 = (\Sigma,[g_0])$. If we require the identity map $(\Sigma,g_0) \rightarrow (\Sigma,g_t)$ to be quasi-conformal with harmonic Beltrami differential 
\[
\nu_{t \Phi}^0 \defin \frac{t \bar{\phi}}{\rho_0} \ \partial_{z_0} \otimes \dd{\bar{z}_0} ,
\]
then the Riemannian metric $g_t$ can be expressed as
\[
g_t = \rho_t \abs{\pdv{z_t}{z_0}}^2 \abs{\dd{z_0}}^2 + 2 t \rho_t \abs{\pdv{z_t}{z_0}}^2 \Re\left( \frac{\phi}{\rho_0} \dd{z_0}^2 \right) + O(t^2) .
\]
Therefore the first order variation of $g_t$ at $t = 0$ coincides with
\[
\dot{g}_0 = \left( \dv{t}\left. \rho_t \abs{\pdv{z_t}{z_0}}^2 \right|_{t = 0} \abs{\dd{z_0}}^2 \right) + 2 \Re \Phi  .
\]
The quantity $\dot{g}_0$ identifies with a tangent vector to the space $\altmathcal{M}$ of Riemannian metrics over $\Sigma$ at the point $g_0$. The first term in the expression above is conformal to the Riemannian metric $g_0$, hence it is tangent to the conformal class $[g_0] \subset \altmathcal{M}$. The remaining term $2 \Re \Phi$ is a symmetric, $g_0$-\hsk traceless and divergence-free tensor, so it lies in the subspace $S_{tt}(\Sigma, g_0)$ of $T_{g_0} \altmathcal{M}$. 

The computation above proves that the harmonic Beltrami differential $\nu_\Phi$, seen as an element of $T_m \Teich^\conf(\Sigma)$, corresponds to $2 \Re \Phi \in S_{tt}(\Sigma,g_0) \cong T_m \Teich^\hyp(\Sigma)$. Finally, an explicit computation shows the relation between the norms $\norm{\cdot}_{B,q}$ and $\norm{\cdot}_{FT,q}$.
\end{proof}
\end{lemma}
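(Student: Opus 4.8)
The plan is to prove the two assertions separately: the identification of the two tangent-space models, and then the norm comparison, which reduces to a pointwise computation. For the isomorphism I would recall that $B_h(X)$ is the Ahlfors--Bers model of $T_m\Teich_c(\Sigma)$ --- a canonical complement to $N(X)$ inside $B(X)$ --- while $S_{tt}(\Sigma,g)$ is Tromba's model of $T_m\Teich_h(\Sigma)$, both built from the common space $Q(X)$ through the real-linear isomorphisms $\Phi\mapsto\nu_\Phi$ and $\Phi\mapsto 2\Re\Phi$. To match them I would realize the tangent vector $\nu_\Phi$ by an explicit deformation: solve the Beltrami equation with coefficient $t\,\nu_\Phi$, transport the hyperbolic metric to the deformed structure to obtain a family $g_t=\rho_t\abs{\dd{z_t}}^2$ with $g_0=g$, and differentiate at $t=0$. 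Expanding in the fixed coordinate $z_0$ and retaining first-order terms, $\dot g_0$ splits as a multiple of $g_0$ (tangent to the conformal class, hence lying in the directions quotiented out when passing to $\Teich(\Sigma)$) plus the tensor $2\Re\Phi$. Since $2\Re\Phi$ is symmetric, $g_0$-traceless and divergence free, it is precisely the $S_{tt}(\Sigma,g)$-component of $\dot g_0$, so $\nu_\Phi$ and $2\Re\Phi$ represent the same element of $T_m\Teich(\Sigma)$; linearity and bijectivity are inherited from those of $\Phi\mapsto\nu_\Phi$ and $\Phi\mapsto 2\Re\Phi$.

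For the norm identity I would compute pointwise in a conformal coordinate with $g=\rho\abs{\dd{z}}^2$. Since $\nu_\Phi=(\bar\phi/\rho)\,\partial_z\otimes\dd{\bar{z}}$ has $\abs{n}=\abs{\phi}/\rho$, the definition of $\norm{\cdot}_{B,q}$ gives at once $\norm{\nu_\Phi}_{B,q}=\norm{\Phi}_{Q,q}$. On the Fischer--Tromba side I would write $\Re\Phi$ in the real basis $\dd{x},\dd{y}$ as the traceless symmetric tensor $\left(\begin{smallmatrix}\Re\phi & -\Im\phi\\ -\Im\phi & -\Re\phi\end{smallmatrix}\right)$; using $g^{ij}=\rho^{-1}\delta^{ij}$ this yields $\norm{\Re\Phi}_g^2=2\abs{\phi}^2/\rho^2$, so $\norm{2\Re\Phi}_g=2\sqrt{2}\,\abs{\phi}/\rho$. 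Integrating the $q$-th power against $\dvol_g=\rho\,\dd{x}\,\dd{y}$ then gives $\norm{2\Re\Phi}_{FT,q}=2\sqrt{2}\,\norm{\Phi}_{Q,q}$, and comparing with the Beltrami computation produces the claimed $\norm{\nu_\Phi}_{B,q}=\tfrac{1}{2\sqrt{2}}\norm{2\Re\Phi}_{FT,q}$.

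The second paragraph is routine; the only care needed is in tracking the factors of $2$ and $\sqrt{2}$, which arise because $\abs{\dd{z}}^2$ contributes $\rho$ to both $\dvol_g$ and the inverse metric, and because $\Re\Phi$ has two diagonal and two off-diagonal entries of equal modulus. The genuine obstacle is the first paragraph: carrying out the quasiconformal variation carefully enough that the traceless part of $\dot g_0$ is unambiguously $2\Re\Phi$, and not some other multiple, and confirming that the purely conformal remainder truly drops out in the Teichm\"uller quotient. This is the step that reconciles the normalization $\nu_\Phi=\bar\phi/\rho\,\partial_z\otimes\dd{\bar{z}}$ with the factor $2$ in $2\Re\Phi$, and hence pins down the constant $\tfrac{1}{2\sqrt{2}}$ conceptually rather than by brute force.
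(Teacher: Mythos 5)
Your proposal is correct and follows essentially the same route as the paper: the identification is obtained by realizing $\nu_\Phi$ through a quasi-conformal deformation of the hyperbolic metric and splitting $\dot g_0$ into a conformal part plus the transverse traceless tensor $2\Re\Phi$. Your second paragraph in fact supplies the pointwise computation (matrix form of $\Re\Phi$, $\norm{2\Re\Phi}_g = 2\sqrt{2}\,\abs{\phi}/\rho$, integration against $\dvol_g = \rho\,\dd{x}\dd{y}$) that the paper compresses into the phrase ``an explicit computation shows the relation between the norms,'' and your constants check out.
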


\section{A bound of the differential of the length} \label{section:bound_differential_length}

Let $\MesLam(\Sigma)$ denote the space of measured laminations of $\Sigma$. The aim of this section is to produce, given $\mu \in \MesLam(\Sigma)$, a quantitative upper bound of the differential of the length function $\mappa{L_\mu}{\Teich^\hyp(\Sigma)}{\R}$, which associates to every class of hyperbolic metrics $m \in \Teich^\hyp(\Sigma)$ the length of the $m$-\hsk geodesic realization of $\mu$. This estimate is the content of Proposition \ref{prop:bound_differential_length}, which will be our main technical ingredient to produce the upper bound of the dual volume in terms of the Weil-Petersson distance between the hyperbolic metrics on the convex core of a quasi-\hsk Fuchsian manifold.

We briefly sketch the structure of this section: Lemma \ref{lem:derivative_length_integral} describes a natural way to express the differential of $L_\mu$ applied to a first order variation of hyperbolic metrics $\dot{g}$. Lemma \ref{lem:ptwise_integral_bound} uses the properties of holomorphic functions to bound the pointwise value of a holomorphic quadratic differential at $x \in \Sigma$ with its $L^q$-\hsk norm on the ball centered at $x$. Then Proposition \ref{prop:bound_differential_length} will follow by selecting a first order variation $\dot{g}$ in $S_{tt}(\Sigma,g)$ and then carefully applying the bound of Lemma \ref{lem:ptwise_integral_bound} in the expression found in Lemma \ref{lem:derivative_length_integral}.

\vspace{0.5cm}

Let $m \in \Teich^\hyp(\Sigma)$ and $\mu \in \MesLam(\Sigma)$. Given a hyperbolic metric $g$ in the equivalence class $m$, we identify the measured lamination $\mu$ with its $g$-\hsk geodesic realization inside $(\Sigma,g)$. If $\lambda$ is a $g$-\hsk geodesic lamination of $\Sigma$ containing the support of $\mu$, we can cover $\lambda$ by finitely many flow boxes $\mappa{\sigma_j}{I \times I}{B_j}$, where $I = [0,1]$ and $\sigma_j$ is a homeomorphism verifying $\sigma_j^{-1}(\lambda) = D_j \times I$, for some closed subset $D_j$ of $I$. We select also a collection $\set{\xi_j}_j$ of smooth functions with supports contained in the interior of $B_j$ for every $j$, and such that $\sum_j \xi_j = 1$ over $\lambda$. Since the arcs $\sigma_j(I \times \set{s})$ are transverse to $\lambda$, it makes sense to integrate the first component of $\sigma_j$ with respect to the measure $\mu$. We set the \emph{length of $\mu$ with respect to $m$} to be the quantity
\[
L_\mu(m) \defin \sum_j \int_{D_j} \int_0^1 \xi_j(\sigma_j(p,\cdot)) \dd{\length(\cdot)} \dd{\mu}(p) ,
\]
where $\dd{\length(s)} = \norm{\partial_s \sigma_j(p,s)}_g \dd{s}$. More generally, given a measurable function $f$ defined on a neighborhood of $\lambda$, we define
\[
\iint_\lambda f \dd{\length} \dd{\mu} \defin \sum_j \int_{D_j} \int_0^1 \xi_j(\sigma_j(p,\cdot)) f(\sigma_j(p,\cdot)) \dd{\length(\cdot)} \dd{\mu}(p) .
\]
The quantity $L_\mu(m)$ does not depend on the choices we made of $\sigma_j$, $\xi_j$ and the hyperbolic metric $g$ in the equivalence class $m \in \Teich(\Sigma)$ (see e. g. \cite{bonahon1996shearing}). Therefore, any measured lamination $\mu$ of $\Sigma$ determines a positive function $L_\mu$ on the Teichm\"uller space $\Teich(\Sigma)$, which associates to any $m \in \Teich^\hyp(\Sigma)$ the length of the geodesic realization of $\mu$ in $m$.

Similarly, if $(g_t)_t$ is a smooth $1$-\hsk parameter family of hyperbolic metrics on $\Sigma$, with $g_0 = g$ and $\dot{g}_0 = \dot{g}$, we set
\[
\iint_\lambda \dd{\dot{\length}} \dd{\mu} \defin \frac{1}{2} \sum_j \int_{D_j} \int_0^1 \xi_j(\sigma_j(p,\cdot)) \, \frac{\dot{g} \left( \partial_s \sigma_j(p,\cdot), \partial_s \sigma_j(p,\cdot) \right)}{g \left( \partial_s \sigma_j(p,\cdot), \partial_s \sigma_j(p,\cdot) \right)} \dd{\length(\cdot)} \dd{\mu}(p) .
\]

\begin{lemma} \label{lem:derivative_length_integral}
Let $\mu$ be a measured lamination of $\Sigma$, and let $(m_t)_t$ be a smooth path in $\Teich^\hyp(\Sigma)$ verifying $m_0 = m$ and $\dot{m}_0 = \dot{m} \in T_m \Teich^\hyp(\Sigma)$. Then we have
\[
\dd(L_\mu)_m(\dot{m}) = \iint_\lambda \dd{\dot{\length}} \dd{\mu} ,
\]
where $\iint_\lambda \dd{\dot{\length}} \dd{\mu}$ is defined as above by selecting a smooth path $t \mapsto g_t$ of hyperbolic metrics representing $t \mapsto m_t$.
\begin{proof}
First we prove the statement when $\mu$ is a weight $1$ simple closed curve $\gamma$ in $\Sigma$. Let $\mappa{\gamma_t}{[0,1]}{\Sigma}$ denote a parametrization of the geodesic representative of $\gamma$ with respect to the hyperbolic metric $g_t$, which can be chosen to depend differentiably in $t$. Then the length of $\gamma_t$ with respect to the metric $g_t$ can be expressed as
\[
L_\gamma(m_t) = \int_0^1 \sqrt{g_t(\gamma_t'(s),\gamma_t'(s))} \dd{s} .
\]
Now, by taking the derivative of this expression in $t$ and using the fact that $\nabla \dot{\gamma}_0 \equiv 0$ (with $\nabla$ being the Levi-Civita connection of $g_0$), we obtain that
\[
\left. \dv{t} L_\gamma(m_t) \right|_{t = 0} = \frac{1}{2} \int_0^1 \frac{\dot{g}_0(\gamma_0'(s),\gamma_0'(s))}{\sqrt{g_0(\gamma_0'(s),\gamma_0'(s))}} \dd{s} ,
\]
which coincides with the quantity $\iint_\gamma \dd{\dot{\length}} \dd{\mu}$. By linearity we deduce the statement for any rational lamination $\mu = \sum_i a_i \gamma_i$.

Now, if $\mu$ is a general measured lamination, we select a sequence of rational laminations $(\mu_n)_n$ converging to $\mu$. As shown in \cite{kerckhoff1985earthquakes}, the functions $L_{\mu_n}$ are real analytic over $\Teich^\hyp(\Sigma)$ and they converge in the $\mathscr{C}^\infty$-\hsk topology on compact sets to $L_\mu$. In particular the terms $\dd(L_{\mu_n})_m(\dot{m})$ converge to $\dd(L_\mu)_m(\dot{m})$. On the other hand, it is simple to check that the expression $\iint_\lambda \dd{\dot{\length}} \dd{\mu}$ varies continuously in the measured lamination $\mu \in \MesLam(\Sigma)$ (see for instance the proof of \cite[Proposition~3.3]{mazzoli2021the_dual}, where an analogous result was proved for the realization of measured laminations inside a $1$-\hsk parameter family of convex co-compact hyperbolic $3$-\hsk manifolds). Hence the statement follows by an approximation argument.
\end{proof}
\end{lemma}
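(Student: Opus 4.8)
The plan is to reduce the identity to the case of a single weighted simple closed curve, establish that case by a direct first-variation computation, extend by linearity to rational laminations, and finally pass to general measured laminations by an approximation argument.

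First I would take $\mu = \gamma$ a weight-one simple closed curve and let $\gamma_t$ denote the $g_t$-geodesic representative in the free homotopy class of $\gamma$, chosen to depend smoothly on $t$ (possible since simple closed geodesics vary smoothly with the hyperbolic metric). Then $L_\gamma(m_t) = \int_0^1 \sqrt{g_t(\gamma_t'(s),\gamma_t'(s))} \dd{s}$, and differentiating at $t = 0$ a priori produces two contributions: the explicit variation of the metric, and the variation coming from the moving curve $\dot{\gamma}_0$. The key point is that the second contribution vanishes: since $\gamma_0$ is a $g_0$-geodesic, it is a critical point of $g_0$-length, so the first variation along $\dot{\gamma}_0$ is zero (equivalently $\nabla_{\gamma_0'} \gamma_0' \equiv 0$ and the boundary terms cancel by periodicity). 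Only the metric term survives, giving $\tfrac{1}{2}\int_0^1 \dot{g}_0(\gamma_0',\gamma_0')/\sqrt{g_0(\gamma_0',\gamma_0')} \dd{s}$, which is exactly $\iint_\gamma \dd{\dot{\length}} \dd{\mu}$ read off from the flow-box definition. By linearity this settles every rational lamination $\mu = \sum_i a_i \gamma_i$.

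For a general $\mu$ I would choose rational laminations $\mu_n \to \mu$ in $\MesLam(\Sigma)$ and pass to the limit on both sides. On the left I would invoke Kerckhoff's result that each $L_{\mu_n}$ is real-analytic on $\Teich_h(\Sigma)$ and that $L_{\mu_n} \to L_\mu$ in the $\mathscr{C}^\infty$ topology on compact sets, so that $\dd(L_{\mu_n})_m(\dot{m}) \to \dd(L_\mu)_m(\dot{m})$. On the right I would verify that the functional $\mu \mapsto \iint_\lambda \dd{\dot{\length}} \dd{\mu}$ is continuous for the weak topology on transverse measures; here the flow-box description is convenient, since on each box the integrand depends only on $g_0$, $\dot{g}_0$ and the fixed geometry of $\sigma_j$, so its dependence on $\mu$ is just integration of a fixed continuous function against the transverse measure. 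Matching the two limits yields the claimed identity.

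I expect the genuinely delicate step to be the approximation rather than the single-curve computation. Two points require care: first, that the flow-box functional is well defined independently of the auxiliary data ($\sigma_j$, the partition $\xi_j$, and the representative $g$ of $m$) and depends continuously on $\mu$, which rests on the invariance properties of $L_\mu$ recorded above; second, that the convergence $\mu_n \to \mu$ is compatible with fixing a single geodesic lamination $\lambda$ carrying all the supports, so that the interchange of the limit in $n$ with the transverse integration is justified. Once continuity in $\mu$ of the right-hand side is in place, the identity for general $\mu$ follows formally from the rational case.
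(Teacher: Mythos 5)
Your proposal is correct and follows essentially the same route as the paper's own proof: the direct first-variation computation for a weight-one simple closed curve (with the geodesic equation killing the contribution of the moving curve), extension by linearity to rational laminations, and the limiting argument via Kerckhoff's $\mathscr{C}^\infty$-convergence of $L_{\mu_n}$ together with continuity of $\iint_\lambda \dd{\dot{\length}} \dd{\mu}$ in $\mu$. Your added remarks on the well-definedness of the flow-box functional and on fixing a carrier lamination $\lambda$ are sensible precisions of the same argument, not a different approach.
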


Before stating Lemma \ref{lem:ptwise_integral_bound}, we define for convenience the following quantities: for every $q \in [1,\infty)$ and $r > 0$, we set
\begin{equation} \label{eq:mult_const_pointwise}
C(r,q) \defin \left( \frac{2 q - 1}{4 \pi} \frac{(\cosh(r/2))^{4 q - 2}}{(\cosh(r/2))^{4 q - 2} - 1} \right)^{1/q} .
\end{equation}
When $q = \infty$, we define $C(r,\infty) \defin 1$ for every $r > 0$.

\begin{lemma} \label{lem:ptwise_integral_bound}
Let $(\Sigma,g)$ be a hyperbolic surface. Given $x \in \Sigma$ and $r < \injrad_g(x)$, we denote by $B_r(x)$ the metric ball of radius $r$ centered at $x \in \Sigma$. Then, for every $q \in [1,\infty]$ and for every holomorphic quadratic differential on $(\Sigma,[g])$, we have
\[
\norm{\Re \Phi_x} \leq C(r,q) \, \norm{\Re \Phi}_{FT,L^q(B_r(x))} .
\]
where $\norm{\Re \Phi_x}$ is the pointwise norm of the tensor $\Re \Phi$ at $x$.
\begin{proof} If $q = \infty$, the statement is clear. Consider $q < \infty$. By passing to the universal cover, we can assume the surface to be $\Delta = \set{z \in \C \mid \abs{z} < 1}$ and $x$ to be $0 \in \Delta$. The hyperbolic metric of $\Delta$ is of the form
\[
g_\Delta = \left( \frac{2}{1 - \abs{z}^2} \right)^2 \abs{\dd{z}}^2 ,
\]
where $z \in \Delta$ is the natural coordinate of $\Delta \subset \C$. In what follows, we will denote by $\norm{\cdot}$ the norm induced by the hyperbolic metric, and by $\norm{\cdot}_0$ the one induced by the standard Euclidean metric $\abs{\dd{z}}^2$. 

If $\Phi = \phi \dd{z}^2$ is a holomorphic quadratic representative, then for any $\rho \in (0,1)$ the residue theorem tells us that
\[
\phi(0) = \frac{1}{2 \pi i} \int_{\partial B^E_\rho} \frac{\phi(z)}{z} \dd{z} ,
\]
where $B^E_\rho = B^E_\rho(0) = \set{z \in \Delta \mid \abs{z} < \rho}$ (here $E$ stands for "Euclidean"). In particular we have
\begin{equation} \label{eq:jensen}
\abs{\phi(0)}^q \leq \left( \frac{1}{2 \pi} \int_0^{2 \pi} \abs*{\phi(\rho e^{i \theta})} \dd{\theta} \right)^q \leq \frac{1}{2 \pi} \int_0^{2 \pi} \abs*{\phi(\rho e^{i \theta})}^q \dd{\theta} ,
\end{equation}
where in the last step we used the H\"older inequality. At $z = \rho e^{i \theta}$, the hyperbolic norm of $\Re \Phi(z)$ can be expressed as follows:
\[
\norm{\Re \Phi(z)} = \frac{1}{\sqrt{2}} \abs*{\phi(\rho e^{i \theta})} \left( \frac{2}{1 - \rho^2} \right)^{- 2} \norm{\dd{z}^2}_0 .
\]
It is easy to check that the metric ball $B_r$ centered at $0$ with respect to the hyperbolic distance coincides with $B^E_{\tanh(r/2)}$, and that the hyperbolic volume form $\dvol$ is given by $\rho (2/(1 - \rho^2))^2 \dd{\rho} \dd{\theta}$. Combining all these facts, if we multiply the inequality \eqref{eq:jensen} by $\rho (2/(1 - \rho^2))^{2 - 2q}$ and we integrate in $\int_0^{\tanh r/2} \dd{\rho}$, we deduce that
\begin{align*}
\int_{B_r} \norm{\Re \Phi}^q \dvol & = 2^{- q/2} \norm{\dd{z}^2}^q_0 \ \int_0^{\tanh r/2} \rho \left( \frac{2}{1 - \rho^2} \right)^{2 - 2q} \int_0^{2 \pi} \abs*{\phi(\rho e^{i \theta})}^q \dd{\theta} \dd{\rho} \\
& \geq 2 \pi \abs{\phi(0)}^q \ 2^{- q/2 - 2(q - 1)} \norm{\dd{z}^2}^q_0 \ \int_0^{\tanh r/2} \rho (1 - \rho^2)^{2(q - 1)} \dd{\rho} \\
& = 4 \pi \norm{\Re \Phi(0)}^q \ \frac{1}{2 q - 1} \left(1 - \frac{1}{(\cosh(r/2))^{4 q - 2}} \right) \\
& = C(r,q)^{- q} \norm{\Re \Phi(0)}^q ,
\end{align*}
which proves the assertion.
\end{proof}
\end{lemma}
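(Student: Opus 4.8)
The plan is to use the fact that $\Phi = \phi\,\dd{z}^2$ is \emph{holomorphic}: its coefficient $\phi$ then satisfies the mean value property, which I will turn into a sub-mean-value $L^q$ estimate and finally convert into a hyperbolic estimate by carefully tracking the conformal factor. The case $q = \infty$ is immediate, since $C(r,\infty) = 1$ and the value $\norm{\Re\Phi_x}$ is trivially dominated by $\sup_{B_r(x)}\norm{\Re\Phi}_g$; so I focus on $q \in [1,\infty)$.

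First I would reduce to a model computation in the disk. Since $r < \injrad_g(x)$, the ball $B_r(x)$ is embedded, so I may lift to the universal cover, place $x$ at the origin of the Poincar\'e disk $\Delta$, and write the hyperbolic metric as $g_\Delta = \lambda\,\abs{\dd{z}}^2$ with $\lambda = (2/(1 - \abs{z}^2))^2$. Three elementary identities are then needed: the pointwise hyperbolic norm satisfies $\norm{\Re\Phi}_g = c\,\abs{\phi}\,\lambda^{-1}$ for a universal constant $c$ (coming from $\norm{\dd{z}^2}_0$, and which will eventually cancel); the hyperbolic ball $B_r$ is exactly the Euclidean disk of radius $\tanh(r/2)$; and in polar coordinates $\dvol_g = \lambda\,\rho\,\dd{\rho}\,\dd{\theta}$, where $\rho \in (0,1)$ denotes the Euclidean radius.

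The analytic core is the following sub-mean-value inequality. By Cauchy's integral formula applied to the circle of Euclidean radius $\rho$, one has $\phi(0) = \frac{1}{2\pi}\int_0^{2\pi}\phi(\rho e^{i\theta})\,\dd{\theta}$, whence by the triangle inequality followed by H\"older (equivalently, convexity of $t \mapsto t^q$) one obtains $\abs{\phi(0)}^q \le \frac{1}{2\pi}\int_0^{2\pi}\abs{\phi(\rho e^{i\theta})}^q\,\dd{\theta}$ for every $\rho \in (0,1)$. I would then multiply this inequality by the radial weight $\rho\,\lambda^{1-q}$ and integrate over $\rho \in (0,\tanh(r/2))$. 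The left-hand side becomes, up to the constant $c^q$, precisely $\int_{B_r}\norm{\Re\Phi}_g^q\,\dvol_g$, while the right-hand side leaves the radial integral $\int_0^{\tanh(r/2)}\rho(1 - \rho^2)^{2(q-1)}\,\dd{\rho}$. This integral is evaluated by the substitution $u = 1 - \rho^2$, using $1 - \tanh^2(r/2) = \cosh^{-2}(r/2)$, and yields the factor $\frac{1}{2(2q-1)}\big(1 - \cosh(r/2)^{-(4q-2)}\big)$. Collecting the various powers of $2$ together with $\lambda(0) = 4$ then produces exactly $C(r,q)^{-q}\norm{\Re\Phi(0)}_g^q$ with the constant $C(r,q)$ of \eqref{eq:mult_const_pointwise}, and the lemma follows upon taking $q$-th roots.

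I expect the main difficulty to be purely in the bookkeeping rather than in any conceptual step: one must shepherd the several conformal factors — the proportionality constant $c$ (which cancels), the base value $\lambda(0) = 4$, and the powers $2^{2-2q}$ arising from expanding $\lambda^{1-q}$ — so that the resulting constant lands on $C(r,q)$ rather than on some harmlessly larger quantity. The one structural hypothesis that must be invoked is $r < \injrad_g(x)$: it is exactly what makes $B_r(x)$ embedded, legitimizing both the passage to $\Delta$ and the identification of $B_r$ with the Euclidean disk of radius $\tanh(r/2)$.
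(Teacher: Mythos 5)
Your proposal is correct and follows essentially the same route as the paper's own proof: reduction to the Poincar\'e disk via $r < \injrad_g(x)$, the mean value property of $\phi$ upgraded to a sub-mean-value $L^q$ inequality by H\"older, multiplication by the radial weight $\rho\,\lambda^{1-q}$, and the same evaluation of $\int_0^{\tanh(r/2)}\rho(1-\rho^2)^{2(q-1)}\dd{\rho}$ leading to $C(r,q)^{-q}$. Your bookkeeping (the cancellation of the universal constant $c$ and the role of $\lambda(0)=4$) matches the paper's computation exactly, so there is nothing to add.
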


\noindent We state here another useful fact we will use in the proof of Proposition \ref{prop:bound_differential_length}:

\begin{lemma} \label{lemma:Fubini}
Let $(\Sigma,g)$ be a hyperbolic surface and let $\mu$ be a measured lamination on $\Sigma$. Then, for every $L^1$-\hsk function $\mappa{f}{N_r(\mu)}{\R}$ defined on the $r$-\hsk neighborhood of $\mu$ in $\Sigma$, we have
\[
\iint_{\lambda} \left( \int_{B_r(\cdot)} f \dvol_g \right) \dd{\length} \dd{\mu} = \int_\Sigma \left( \iint_{\lambda \cap B_r(\cdot)} \dd{\length} \dd{\mu} \right) f \dvol_g .
\]
\begin{proof}
Assume that $\mu$ is a $1$-\hsk weighted simple closed curve $\mappa{\gamma}{[0,1]}{\Sigma}$, and let $\tilde{f}$ denote the extension of the function $f$ to $\Sigma$ verifying $\tilde{f}(x) = 0$ for all $x \in \Sigma \setminus N_r(\gamma)$. We set $\mappa{\xi}{\Sigma \times \Sigma}{\R}$ to be the function taking value $\xi(x,y) = 1$ if the distance between $x$ and $y$ is less than $r$, and $\xi(x,y) = 0$ otherwise. Then the integral on the left can be expressed as
\[
\int_0^1 \int_\Sigma \tilde{f}(x) \ \xi(x,\gamma(t)) \dvol_g(x) \dd{\length(t)} .
\]
Applying Fubini's theorem we obtain
\begin{align*}
\int_0^1 \int_\Sigma \tilde{f}(x) \xi(x,\gamma(t)) \dvol_g(x) \dd{\length(t)} & = \int_\Sigma \int_0^1  \xi(x,\gamma(t)) \dd{\length(t)} \tilde{f}(x) \dvol_g(x) \\
& = \int_\Sigma \left( \int_{\gamma^{-1}(B_r(x))} \dd{\length(t)} \right) \tilde{f}(x) \dvol_g(x) .
\end{align*}
The last term coincides with the right term of the equality in the statement in the case $\mu = \gamma$. By linearity we deduce the statement when $\mu$ a rational lamination, and by continuity of the two integrals in the statement with respect with $\mu$ we obtain the result for any general measured lamination.
\end{proof}
\end{lemma}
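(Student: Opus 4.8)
The plan is to establish the identity first in the model case where $\mu$ is a single simple closed curve of weight one, then to propagate it to rational laminations by linearity and to arbitrary measured laminations by a continuity argument. At the level of a single curve the whole content collapses to one application of Fubini's theorem, the key structural input being the symmetry of the distance function: the condition $d_g(x,y) < r$ that encodes $x \in B_r(y)$ is the same as the one encoding $y \in B_r(x)$. This symmetry is exactly what lets me transfer the ball from the center on the lamination to the center on the surface.

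Concretely, I would write $\mu = \gamma$ with $\gamma\colon [0,1] \to \Sigma$ parametrized so that $\iint_\gamma h \dd{\length} \dd{\mu}$ is the arc-length integral of $h$ along $\gamma$. Introducing the kernel $\xi(x,y) \defin \1$ when $d_g(x,y) < r$ and $\xi(x,y) \defin 0$ otherwise, and extending $f$ by zero outside $N_r(\gamma)$ to a function $\tilde f$ on all of $\Sigma$, I would rewrite the left-hand side as
\[
\int_0^1 \left( \int_\Sigma \tilde f(x) \, \xi(x,\gamma(t)) \dvol_g(x) \right) \dd{\length(t)} ,
\]
using that $B_r(\gamma(t)) \subseteq N_r(\gamma)$, so that replacing $f$ by $\tilde f$ and the domain $B_r(\gamma(t))$ by all of $\Sigma$ costs nothing. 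Applying Fubini to interchange the $\Sigma$-integration with the arc-length integration, the inner integral $\int_0^1 \xi(x,\gamma(t)) \dd{\length(t)}$ becomes precisely the arc-length of the portion of $\gamma$ lying in $B_r(x)$, i.e. $\iint_{\gamma \cap B_r(x)} \dd{\length} \dd{\mu}$, and what remains is exactly the right-hand side for $\mu = \gamma$. Linearity of both members in $\mu$ then gives the identity for every rational lamination $\mu = \sum_i a_i \gamma_i$.

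I expect the main obstacle to lie not in this formal computation but in rigorously justifying the two limiting procedures. For Fubini I would check that $\xi$, being the indicator of the open set $\{(x,y)\in\Sigma^2 : d_g(x,y)<r\}$, is Borel, and that the integrand $\tilde f(x)\,\xi(x,\gamma(t))$ is dominated by $|\tilde f(x)|\in L^1(\Sigma)$ against the finite product measure $\dvol_g \otimes \dd{\length}$ on $\Sigma \times [0,1]$, which places us within the Fubini--Tonelli hypotheses. For the passage to a general measured lamination I would select rational $\mu_n \to \mu$ and invoke continuity of $\iint_\lambda\,\cdot\,\dd{\length}\dd{\mu}$ in $\mu$: on the left the integrand $x \mapsto \int_{B_r(x)} f \dvol_g$ is a fixed bounded function (continuous by dominated convergence, since the spheres $\partial B_r(x)$ are null), so the left-hand side converges; the delicate point is the right-hand side, where the weight $\iint_{\lambda \cap B_r(x)} \dd{\length}\dd{\mu_n}$ depends on $\mu_n$ through the discontinuous indicator $\1_{B_r(x)}$. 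Here I would argue that for almost every center $x$ the ball is a continuity set for the limiting transverse measure, so that weak convergence gives pointwise convergence of the weights, and then conclude by dominated convergence against $f \dvol_g$, the weights being uniformly bounded along the convergent sequence $(\mu_n)_n$.
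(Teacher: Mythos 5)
Your proposal is correct and follows essentially the same route as the paper: reduction to a weight-one simple closed curve, the indicator kernel $\xi(x,y)$ with the zero extension $\tilde f$, one application of Fubini, then linearity for rational laminations and continuity in $\mu$ for the general case. The only difference is that you supply more justification for the final continuity step (measurability/domination for Fubini, and the continuity-set argument handling the discontinuous indicator $\1_{B_r(x)}$ on the right-hand side), which the paper leaves implicit; this is a sound elaboration rather than a different approach.
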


Let $m \in \Teich^\hyp(\Sigma)$ and $\mu \in \MesLam(\Sigma)$, and select a hyperbolic metric $g$ in the equivalence class $m$. If $(\widetilde{\Sigma},\tilde{g})$ denotes the universal cover of $(\Sigma, g)$, we define
\[
D(m,\mu,r) \defin \sup_{\tilde{x} \in \widetilde{\Sigma}} \iint_{\tilde{\lambda} \cap B_r(\tilde{x})} \dd{\tilde{\length}} \dd{\tilde{\mu}} < \infty .
\]
where $\tilde{\lambda}$ denotes the support of the measured lamination $\tilde{\mu}$. In other words, $D(m,\mu,r)$ is the supremum, over the points $\tilde{x}$ in the universal cover $\widetilde{\Sigma}$, of the length of the portion of $\tilde{\mu}$ contained in the ball centered at $\tilde{x}$ of radius $r$.

\begin{proposition} \label{prop:bound_differential_length}
For any $r > 0$ and for any $p \in [1,\infty]$ we have
\[
\abs{\dd(L_\mu)_m(\dot{m})} \leq L_\mu(m)^{1/p} \ C(r,q) \ D(m,\mu,r)^{1/q} \norm{\nu}_{B,q} ,
\]
where $p$ and $q$ are conjugate exponents, i. e. $\frac{1}{p} + \frac{1}{q} = 1$, and $\nu$ denotes the harmonic Beltrami differential representing the tangent direction $\dot{m} \in T_m \Teich^\hyp(\Sigma)$. In particular, for $p = 2$, we have
\[
\norm{\dd(L_\mu)_m}_{Q,2} \leq C(r,2) \sqrt{L_\mu(m) \ D(m,\mu,r)} .
\]
\begin{proof}
As described in \cite{tromba2012teichmuller}, there exists a unique symmetric transverse-\hsk traceless tensor $\varphi \in S_{tt}(\Sigma,g)$ representing the tangent vector $\dot{m} \in T_m \Teich^\hyp(\Sigma)$, which is of the form $\Re \Phi = \varphi$ for some holomorphic quadratic differential $\Phi$ on $(\Sigma,[g])$. Observe that, since $\varphi$ lies in $S_{tt}(\Sigma,g)$, we have $\abs{\varphi(v,v)} \leq \frac{1}{\sqrt{2}} \norm{\varphi}_g \norm{v}_g^2$ for every tangent vector $v$. Making use of Lemma \ref{lem:derivative_length_integral} and recalling the definition of the term $\iint_\lambda \dd{\dot{\length}} \dd{\mu}$ we see that
\[
\abs{\dd(L_\mu)_m(\dot{m})} = \abs{\iint_\lambda \dd{\dot{\length}} \dd{\mu}} \leq \frac{1}{2 \sqrt{2}} \iint_\lambda \norm{\varphi}_g \dd{\length} \dd{\mu} .
\]
By applying the H\"older inequality on the right-side integral, we get
\begin{equation} \label{eq:holder_ineq}
\abs{\dd(L_\mu)_m(\dot{m})} \leq \frac{1}{2 \sqrt{2}} \iint_\lambda \norm{\varphi}_g \dd{\length} \dd{\mu} \leq \frac{L_\mu(m)^{1/p}}{2 \sqrt{2}} \left( \iint_\lambda \norm{\varphi}^q_g \dd{\length} \dd{\mu} \right)^{1/q} .
\end{equation}
Now we estimate the integral $\iint_\lambda \norm{\varphi}^q_g \dd{\length} \dd{\mu}$ by lifting it to a suitable covering of $\Sigma$, and then applying Lemma \ref{lem:ptwise_integral_bound}. More precisely, let $(\widehat{\Sigma},\hat{g}) \rightarrow (\Sigma,g)$ be a $N$-\hsk index covering so that $\injrad(\widehat{\Sigma},\hat{g}) > r$, for some $N \in \N$. We denote by $\hat{\bullet}$ the lift of the object $\bullet$ on $\widehat{\Sigma}$. It is immediate to check that the following relation holds
\[
\iint_\lambda \norm{\varphi}^q_g \dd{\length} \dd{\mu} = \frac{1}{N} \iint_{\hat{\lambda}} \norm{\hat{\varphi}}_{\hat{g}}^q \dd{\hat{\length}} \dd{\hat{\mu}} .
\]
Then, by applying Lemma \ref{lem:ptwise_integral_bound} on the surface $(\widehat{\Sigma},\hat{g})$ and at each point $\hat{x} \in \hat{\lambda}$, we get
\begin{align*}
\iint_\lambda \norm{\varphi}^q_g \dd{\length} \dd{\mu} & = \frac{1}{N} \iint_{\hat{\lambda}} \norm{\hat{\varphi}}_{\hat{g}}^q \dd{\hat{\length}} \dd{\hat{\mu}} \\
& \leq \frac{C(r,q)^q}{N} \iint_{\hat{\lambda}} \norm{\hat{\varphi}}_{FT, L^q(B_r(\cdot))}^q \dd{\hat{\length}} \dd{\hat{\mu}} \\
& = \frac{C(r,q)^q}{N} \iint_{\hat{\lambda}} \left( \int_{B_r(\cdot)} \norm{\hat{\varphi}}^q_{\hat{g}} \dvol_{\hat{g}} \right) \dd{\hat{\length}} \dd{\hat{\mu}} .
\end{align*}
Using Lemma \ref{lemma:Fubini} and the definition of $D(m,\mu,r)$, we obtain
\begin{align*}
\iint_{\hat{\lambda}} \left( \int_{B_r(\cdot)} \norm{\hat{\varphi}}^q_{\hat{g}} \dvol_{\hat{g}} \right) \dd{\hat{\length}} \dd{\hat{\mu}} & = \int_{\widehat{\Sigma}} \left( \iint_{\hat{\lambda} \cap B_r(\cdot)} \dd{\hat{\length}} \dd{\hat{\mu}} \right) \norm{\hat{\varphi}}^q_{\hat{g}} \dvol_{\hat{g}} \\
& \leq D(m,\mu,r) \int_{\widehat{\Sigma}} \norm{\hat{\varphi}}^q_{\hat{g}} \dvol_{\hat{g}} \\
& = N \ D(m,\mu,r) \ \norm{\varphi}^q_{FT,q} ,
\end{align*}
where, in the last step, we are using again the fact that $(\Sigma,g) \rightarrow (\widehat{\Sigma},\hat{g})$ is a $N$-\hsk index covering. Combining the last two estimates, we obtain
\begin{equation} \label{eq:bound_with_Lp_norm}
\iint_\lambda \norm{\varphi}^q_g \dd{\length} \dd{\mu} \leq C(r,q)^q \ D(m,\mu,r) \ \norm{\varphi}^q_{FT,q} .
\end{equation}
Using the inequalities \eqref{eq:holder_ineq} and \eqref{eq:bound_with_Lp_norm}, we have shown that
\[
\abs{\dd(L_\mu)_m(\dot{m})} \leq \frac{L_\mu(m)^{1/p} \ C(r,q) \ D(m,\mu,r)^{1/q}}{2 \sqrt{2}} \ \norm{\varphi}_{FT,q} .
\]
Finally, by applying Lemma \ref{lemma:relations_between_norms}, we obtain
\[
\abs{\dd(L_\mu)_m(\dot{m})} \leq L_\mu(m)^{1/p} \ C(r,q) \ D(m,\mu,r)^{1/q} \norm{\nu}_{B,q} .
\]
The last assertion follows from the estimate we just proved for $p = 2$ and from Lemma \ref{lem:L2_dual_norm}.
\end{proof}
\end{proposition}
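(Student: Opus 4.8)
The plan is to control the operator norm $\norm{\dd(L_\mu)_m}_{Q,p}$ by estimating its value $\abs{\dd(L_\mu)_m(\dot m)}$ on an arbitrary tangent vector and comparing with a norm of $\dot m$. Following Tromba's description, I would first represent $\dot m \in T_m\Teich_h(\Sigma)$ as a transverse-traceless tensor $\varphi \in S_{tt}(\Sigma,g)$, written as $\varphi = \Re\Phi$ for a holomorphic quadratic differential $\Phi$ on $(\Sigma,[g])$. By Lemma \ref{lemma:dual_norms}, the stated inequality is equivalent to the estimate $\abs{\dd(L_\mu)_m(\dot m)} \leq L_\mu(m)^{1/p}\,C(r,q)\,D(m,\mu,r)^{1/q}\,\norm{\dot m}_{B,q}$, valid for all $\dot m$, so it suffices to produce a bound of this shape.

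For the first reduction I would invoke Lemma \ref{lem:derivative_length_integral} to write $\dd(L_\mu)_m(\dot m) = \iint_\lambda \dd{\dot\length}\dd{\mu}$, and then apply the tensorial inequality $\abs{\varphi(v,v)} \leq \tfrac{1}{\sqrt 2}\norm{\varphi}_g\norm{v}_g^2$ to pass from the signed variation of the length measure to its absolute value, yielding $\abs{\dd(L_\mu)_m(\dot m)} \leq \tfrac{1}{2\sqrt 2}\iint_\lambda \norm{\varphi}_g\dd{\length}\dd{\mu}$. Applying Hölder's inequality with respect to the measure $\dd{\length}\dd{\mu}$, whose total mass over $\lambda$ is exactly $L_\mu(m)$, then separates out the factor $L_\mu(m)^{1/p}$ and reduces the problem to controlling $\iint_\lambda \norm{\varphi}^q_g\dd{\length}\dd{\mu}$.

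The heart of the argument, and the step I expect to be the main obstacle, is the pointwise-to-$L^q$ conversion via Lemma \ref{lem:ptwise_integral_bound}. That lemma requires $r < \injrad_g(x)$ at the point where it is applied, but the injectivity radius of $(\Sigma,g)$ may be arbitrarily small in the thin part, so the bound cannot be used directly on $\Sigma$. I would circumvent this by passing to a finite, $N$-index covering $(\widehat\Sigma,\hat g)\to(\Sigma,g)$ with $\injrad(\widehat\Sigma,\hat g)>r$, which exists by standard residual finiteness of surface groups. Since the cover is $N$-index, both the length integral and the Fischer-Tromba integral scale by $N$, so these factors will ultimately cancel. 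On $\widehat\Sigma$ I can apply Lemma \ref{lem:ptwise_integral_bound} at every point of $\hat\lambda$, replacing $\norm{\hat\varphi}_{\hat g}^q$ pointwise by $C(r,q)^q$ times the integral of $\norm{\hat\varphi}_{\hat g}^q$ over the now-embedded ball $B_r(\cdot)$. A delicate point to verify here is that the local length quantity $\iint_{\hat\lambda\cap B_r(\cdot)}\dd{\hat\length}\dd{\hat\mu}$ is bounded by $D(m,\mu,r)$: this holds because each ball of radius $r$ in $\widehat\Sigma$ lifts isometrically to a ball in the universal cover $\widetilde\Sigma$, so the length of $\hat\mu$ inside it never exceeds the supremum defining $D(m,\mu,r)$.

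It then remains to exchange the order of integration. I would apply Lemma \ref{lemma:Fubini} to rewrite $\iint_{\hat\lambda}\bigl(\int_{B_r(\cdot)}\norm{\hat\varphi}^q_{\hat g}\dvol_{\hat g}\bigr)\dd{\hat\length}\dd{\hat\mu}$ as $\int_{\widehat\Sigma}\bigl(\iint_{\hat\lambda\cap B_r(\cdot)}\dd{\hat\length}\dd{\hat\mu}\bigr)\norm{\hat\varphi}^q_{\hat g}\dvol_{\hat g}$, bound the inner length factor by $D(m,\mu,r)$ as above, and recognize the remaining integral as $N\,\norm{\varphi}^q_{FT,q}$. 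Collecting constants gives $\iint_\lambda\norm{\varphi}^q_g\dd{\length}\dd{\mu} \leq C(r,q)^q\,D(m,\mu,r)\,\norm{\varphi}^q_{FT,q}$. Substituting this into the Hölder estimate and converting $\tfrac{1}{2\sqrt 2}\norm{\varphi}_{FT,q}$ into $\norm{\dot m}_{B,q}$ through Lemma \ref{lemma:relations_between_norms} absorbs the prefactor and yields exactly the desired bound on $\abs{\dd(L_\mu)_m(\dot m)}$; Lemma \ref{lemma:dual_norms} finally turns this into the stated inequality for $\norm{\dd(L_\mu)_m}_{Q,p}$.
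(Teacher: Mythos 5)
Your proposal is correct and follows essentially the same route as the paper's proof: the same reduction via Lemma \ref{lem:derivative_length_integral} and the tensorial inequality, the same H\"older step isolating $L_\mu(m)^{1/p}$, the same passage to an $N$-index cover with $\injrad > r$, and the same use of Lemma \ref{lem:ptwise_integral_bound}, Lemma \ref{lemma:Fubini}, Lemma \ref{lemma:relations_between_norms} and Lemma \ref{lemma:dual_norms}. The only additions are explicit justifications (residual finiteness for the cover, isometric lifting of balls to $\widetilde\Sigma$ when comparing with $D(m,\mu,r)$) that the paper leaves implicit.
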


\section{The differential of the dual volume} \label{section:dual_volume}

In this section we use Proposition \ref{prop:bound_differential_length} to bound the differential of the function $V_\CC^*$, which associates to each quasi-\hsk Fuchsian manifold $M$ the dual volume of its convex core. The link between $V_\CC^*$ and the differential of the length of the bending measured lamination is given by the dual Bonahon-\hsk Schl\"afli formula (see \cite{krasnov2009symplectic}, \cite{mazzoli2021the_dual}). We will recall and make use of the results by \citet{bridgeman_brock_bromberg2017}, and \citet{yarmola2016improved}, which will allow us to estimate uniformly the quantities $L_\mu(m)$ and $D(m,\mu,r)$ appearing in Proposition \ref{prop:bound_differential_length}.

\vspace{0.5cm}

Let $M$ be a complete $3$-\hsk dimensional hyperbolic manifold, We say that a subset $C \subset M$ is convex if for any geodesic arc $\gamma$ of $M$ connecting two points $x$ and $y$ of $C$ (possibly equal), the arc $\gamma$ is fully contained in $C$. The manifold $M$ is called \emph{quasi-\hsk Fuchsian} if it is homeomorphic to $\Sigma \times \R$ and it contains a compact non-empty convex subset. In this case, the intersection of all non-\hsk empty compact convex subsets of $M$ is a non-empty compact convex subset, called the \emph{convex core} of $M$ and denoted by $\CC M$, which is obviously minimal with respect to the inclusion.

The boundary of the convex core $\CC M$ is almost everywhere totally geodesic and it is homeomorphic to two copies of $\Sigma$. If we identify the universal cover of $M$ with $\Hyp^3$, then the preimage of $\partial \CC M$ in $\Hyp^3$ is the union of two locally convex pleated planes $H^\pm$ bent along a measured lamination $\tilde{\mu}$. Since these pleated planes are invariant under the action of the fundamental group of $M$, they determine two hyperbolic metrics $m^+$, $m^- \in \Teich^\hyp(\Sigma)$ and two measured laminations $\mu^+$, $\mu^- \in \MesLam(\Sigma)$. We will denote the couple of metrics $(m^+,m^-)$ by $m \in \Teich(\partial \CC M)$ and the pairs of measured laminations $(\mu^+,\mu^-)$ by $\mu \in \MesLam(\partial \CC M)$.

The action of the fundamental group $\Gamma$ of $M$ naturally extends to $\partial \Hyp^3 \cong \C \Proj^1$ by M\"obius tranformations. Given any $x_0 \in \Hyp^3$, the subset $\Lambda$ of accumulation points of $\Gamma x_0$ in $\partial \Hyp^3$ is called the \emph{limit set} of $\Gamma$.  The action of $\Gamma$ is free and properly discontinuous on $\partial \Hyp^3 \setminus \Lambda$, and it determines a pair of Riemann surface structures $c^+$, $c^-$ on $\Sigma$ and $\overline{\Sigma}$ (the surface $\Sigma$ endowed with the opposite orientation), called the \emph{conformal structures at infinity} of $M$. A well-known result of \cite{bers1960simultaneous} states that the space of quasi-\hsk Fuchsian structures on $\Sigma \times \R$ is parametrized by the couple of conformal structures at infinity. In other words, the map
\[
\begin{matrix}
B \vcentcolon & \QF(\Sigma) & \longrightarrow & \Teich^\conf(\Sigma) \times \Teich^\conf(\overline{\Sigma}) \\
& M & \longmapsto & (c^+,c^-)
\end{matrix}
\]
is a homeomorphism. In fact $B$ is a biholomorphism if we endow $\QF(\Sigma)$ with the complex structure of subset of the character variety $\chi(\pi_1 \Sigma,\Proj \SL_2 \C)$, and the natural complex structure of $\Teich^\conf(\Sigma)$. Another natural map on $\QF(\Sigma)$ is
\[
\begin{matrix}
\Psi \vcentcolon & \QF(\Sigma) & \longrightarrow & \Teich^\hyp(\Sigma) \times \Teich^\hyp(\Sigma) \\
& M & \longmapsto & (m^+,m^-) ,
\end{matrix}
\]
which Thurston conjectured to be another parametrization of the space of quasi-\hsk Fuchsian manifolds. \citet{bonahon1998variations} proved that the map $\Psi$ is $\mathscr{C}^1$ (and actually not $\mathscr{C}^2$), therefore a first order variation of quasi-\hsk Fuchsian structures $\dot{M}$ determines a first order variation of the induced hyperbolic structures $\dot{m}$ on the convex core.

\begin{definition}
Let $N \subset M$ be a compact convex subset of $M$ with regular boundary. The \emph{dual volume} of $N$ is defined as
\[
\Vol^*(N) \defin \Vol(N) - \frac{1}{2} \int_{\partial N} H \dd{a} ,
\]
where $H$ is trace of the shape operator of $\partial N$ with respect to the normal vector pointing towards the interior of $N$. The dual volume of the convex core of $M$ is set to be:
\[
V_\CC^*(M) \defin \Vol(\CC M) - \frac{1}{2} L_\mu(m) .
\]
\end{definition}

Contrary to the usual hyperbolic volume of the convex core (see \citep{bonahon1998schlafli} for details), the dual volume $V_\CC^*(M)$ turns out to be a $\mathscr{C}^1$-\hsk function on the space of quasi-\hsk Fuchsian manifolds, and its variation is described by the following result: 

\begin{theorem}[\cite{krasnov2009symplectic}, \cite{mazzoli2021the_dual}] \label{thm:dual_bonahon_schlafli}
Let $(M_t)_{t \in (- \varepsilon,\varepsilon)}$ be a smooth $1$-\hsk parameter family of quasi-\hsk Fuchsian structures. We denote by $\mu = \mu_0 \in \MesLam(\partial \CC M)$ the bending measure of the convex core of $M = M_0$ and by $(m_t)_t$ the family of hyperbolic metrics on the boundary of the convex core $\CC M_t$. Then the derivative of the dual volume of $\CC M_t$ exists and it verifies
\[
\dd{V_\CC^*}(\dot{M}) = - \frac{1}{2} \dd(L_{\mu})_{m}(\dot{m}) ,
\]
where $m = m_0$ and $\dot{m} = \dot{m}_0 \in T_m \Teich^\hyp(\Sigma)$.
\end{theorem}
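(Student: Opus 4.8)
The plan is to deduce the formula from the classical smooth Schläfli formula, combined with a controlled approximation of the convex core by convex bodies with smooth boundary. First I would prove the statement for a compact convex $N \subset M$ with smooth boundary, varying smoothly with $t$, and only afterwards let $N$ degenerate onto $\CC M$. The whole point of passing to the \emph{dual} volume is that, for smooth boundary, its first variation depends solely on the variation of the induced metric, which makes the limit onto the pleated boundary transparent.

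For the smooth case I would start from the smooth Schläfli formula for a one-parameter family of hyperbolic metrics on a manifold with smooth boundary,
\[
\dv{t} \Vol(N_t) = \frac{1}{2} \int_{\partial N_t} \left( \dot{H} + \frac{1}{2} \scal{\II}{\dot{\I}}_{\I} \right) \dd{a},
\]
where $\I$, $\II$ are the first and second fundamental forms of $\partial N_t$, $H = \trace_\I \II$, and the pairing is taken with respect to $\I$. Differentiating the defining expression $\Vol^*(N_t) = \Vol(N_t) - \tfrac{1}{2}\int_{\partial N_t} H \dd{a}$ and using that the area element varies as $\dv{t}\dd{a} = \tfrac{1}{2}\trace_\I(\dot{\I})\,\dd{a}$, the two $\int_{\partial N_t}\dot{H}\,\dd{a}$ terms cancel and I obtain the \emph{dual Schläfli formula}
\[
\dv{t}\Vol^*(N_t) = \frac{1}{4}\int_{\partial N_t}\scal{\II - H\,\I}{\dot{\I}}_{\I}\,\dd{a}.
\]
The right-hand side involves only the variation $\dot{\I}$ of the induced metric, not the variation of $\II$: this is the analytic incarnation of the Legendre-type duality exchanging lengths and dihedral angles between $\Vol$ and $\Vol^*$, and it is exactly what forces the outcome to be the \emph{partial} derivative $\dd(L_\mu)_m(\dot m)$, computed at fixed bending lamination.

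Next I would identify the limit of this expression as $\partial N_t$ degenerates to the boundary of the convex core, taking $N = N_\varepsilon$ to be the $\varepsilon$-neighborhood of $\CC M$ (whose boundary is convex and $C^{1,1}$) and letting $\varepsilon \to 0$. Writing the shape operator in an orthonormal frame $\set{e_1, e_2}$ adapted to the bending, with $e_1$ tangent to the leaves, one checks that $\II - H\,\I$ takes the form $\mathrm{diag}(-\kappa_2, 0)$, so that in the limit it concentrates on the bending lamination $\lambda$, in the \emph{leaf} direction, with transverse mass given by the bending measure $\mu$. Pairing against $\dot{\I}$ and using that $\int_\gamma \dot{\I}(e_1,e_1)\,\dd{\length} = 2\,\dd(L_\gamma)_m(\dot m)$ for each leaf $\gamma$ (as in the computation of Lemma \ref{lem:derivative_length_integral}), the integral $\tfrac14\int\scal{\II - H\,\I}{\dot{\I}}_{\I}\,\dd{a}$ converges, leaf by leaf, to $\tfrac12\,\dd(L_\mu)_m(\dot m)$, the sign being fixed by the choice of the convex-side normal in the definition of $\Vol^*$. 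Approximating $\mu$ by weighted simple closed curves and invoking the continuity of $L_\bullet$ then upgrades the identity from the rational case to an arbitrary measured lamination.

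The main obstacle is the justification of this last limit. The second fundamental forms of $\partial N_\varepsilon$ blow up near $\lambda$ as $\varepsilon \to 0$, so one must show that the contribution away from the lamination is negligible, that the mass concentrating on each leaf is exactly the transverse measure $\mu$, and that the induced metrics and their variations $\dot{\I}|_{\partial N_\varepsilon}$ converge to $m$ and $\dot m$. This requires uniform geometric control of the equidistant surfaces near a pleated surface together with a dominated-convergence argument to exchange the limit with the integral; it is also the step where the $C^1$ (but not $C^2$) regularity of $M \mapsto m$ enters, guaranteeing that $\dot m$ is well-defined. Once these convergences are in place, the dual Schläfli formula passes to the limit and yields the stated identity.
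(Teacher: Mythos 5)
First, a structural remark: the paper contains \emph{no proof} of this theorem. It is quoted from \cite{krasnov2009symplectic}, where it is deduced from Bonahon's Schl\"afli formula for the convex core \cite{bonahon1998schlafli} (which expresses $\delta \Vol(\CC M)$ through the derivative of the bending lamination, a transverse H\"older distribution) via a Leibniz-type cancellation; for a self-contained argument the paper points to \cite{mazzoli2018the_dual}. Your route --- deduce a dual differential Schl\"afli formula from the smooth Rivin--Schlenker formula and then degenerate smooth convex approximations onto the pleated boundary --- is genuinely different from the Krasnov--Schlenker argument and belongs to the same circle of ideas as the cited self-contained proof. Your algebraic derivation of $\dv{t}\Vol^*(N_t) = \frac{1}{4}\int_{\partial N_t}\scal{\II - H\I}{\dot{\I}}\dd{a}$ is correct and is exactly the paper's Theorem \ref{thm:dual_diff_schlafli}, up to the convention on the normal.

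There is, however, a concrete sign problem. Under the conventions that make both your quoted Schl\"afli formula and the convergence $\Vol^*(N_\varepsilon) \to \Vol(\CC M) - \frac{1}{2}L_\mu(m)$ true (outward normal, $\II \geq 0$ on convex boundaries), your own leaf computation gives $-\frac{1}{2}\dd(L_\mu)_m(\dot m)$: indeed $\II - H\I \approx \operatorname{diag}(-\kappa_2,0)$, the mass $\kappa_2 \dd{a}$ concentrates onto $\dd{\length}\otimes\dd{\mu}$, and $\frac{1}{4}\cdot(-2)\,\dd(L_\mu)_m(\dot m) = -\frac{1}{2}\dd(L_\mu)_m(\dot m)$. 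You cannot restore a plus sign by invoking the convex-side normal: flipping the normal also flips the correction term $-\frac{1}{2}\int H \dd{a}$ inside $\Vol^*$, after which $\Vol^*(N_\varepsilon)$ no longer converges to $\Vol(\CC M) - \frac{1}{2}L_\mu(m)$. In fact $-\frac{1}{2}$ is the correct sign: it is what the paper's introduction, its table, and the limit computed in Proposition \ref{prop:uniform_convergence_diff} all assert; the displayed formula in Theorem \ref{thm:dual_bonahon_schlafli} carries a sign slip, and your ``choice of normal'' remark papers over a typo rather than proving anything.

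The genuine gaps are analytic. (a) $\partial N_\varepsilon$ is only $C^{1,1}$, so the smooth Schl\"afli formula does not apply verbatim; you need a further smoothing step or a version valid for convex $C^{1,1}$ boundaries. (b) Decisively: your scheme yields, for each fixed $t$, convergence $\Vol^*(N_\varepsilon(t)) \to \Vol^*(\CC M_t)$, and convergence of the derivatives at $t=0$. Pointwise convergence of functions together with convergence of derivatives at a single point does \emph{not} imply that the limit function is differentiable there with the limiting derivative; one needs convergence of the derivatives uniform in $t$ near $0$. Producing that uniformity --- controlling the equidistant surfaces as $\varepsilon \to 0$ while the pleating data vary only in a weak ($C^1$, H\"older-distribution) sense --- is the actual content of the cited proof, and a dominated-convergence argument at fixed $t$ does not supply it. (c) Your reduction ``leaf by leaf, then approximate $\mu$ by weighted simple closed curves and invoke continuity of $L_\bullet$'' is unavailable: $\mu$ is not a free parameter but the bending lamination of the given $M$, so approximating it by rational laminations means changing the manifold itself (e.g.\ via \cite{bonahon_otal2004laminations}), which reintroduces exactly the missing uniform control. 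For a general lamination the concentration of $\II - H\I$ onto the leaf direction with transverse mass $\mu$ must be proved directly, by Epstein--Marden-type estimates \cite{canary_epstein_green2006} on equidistant surfaces of pleated surfaces.
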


This fact has been initially proved by \cite{krasnov2009symplectic} making use of Bonahon's work on the variation of the hyperbolic volume in \citep{bonahon1998schlafli}. The author of this paper has recently described an alternative proof of this relation that does not require the results of \citep{bonahon1998schlafli}, which can be found in \cite{mazzoli2021the_dual}.

An immediate corollary of the variation formula of the dual volume and of our estimate in Proposition \ref{prop:bound_differential_length} is the following:

\begin{proposition} \label{prop:diff_dual_volume_intermediate}
Let $\mappa{V_\CC^*}{\QF(\Sigma)}{\R}$ denote the function associating to each quasi-\hsk Fuchsian manifold $M$ the dual volume of its convex core $\CC M$. Then for every $r > 0$ and for every $p \in [1,\infty]$ we have
\[
\abs{\dd{V_\CC^*}(\dot{M})} \leq \frac{1}{2} \ L_\mu(m)^{1/p} \ C(r,q) \ D(m,\mu,r)^{1/q} \ \norm{\nu}_{B,q} ,
\]
where $C(r,q)$ and $D(m,\mu,r)$ are the constants defined in the previous section, $p$ and $q$ are conjugated exponents, and $\nu$ denotes the harmonic Beltrami differential representing the variation of the hyperbolic metric of the convex core.
\end{proposition}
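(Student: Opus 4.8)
The plan is to read the estimate directly off the dual Bonahon--Schl\"afli formula (Theorem~\ref{thm:dual_bonahon_schlafli}) and the bound on the differential of the length established in Proposition~\ref{prop:bound_differential_length}, glued together by the duality between the norms $\norm{\cdot}_{Q,p}$ and $\norm{\cdot}^*_{B,q}$ recorded in Lemma~\ref{lemma:dual_norms}. Since the statement is phrased in terms of a first order variation $\dot{M}$ of quasi-\hsk Fuchsian structures, the first thing to do is to convert it into a statement about the induced variation $\dot{m}$ of the hyperbolic metrics on $\partial \CC M$: this is exactly what the differentiability of $\Psi$ provides.

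More precisely, I would fix a smooth path $(M_t)_t$ of quasi-\hsk Fuchsian structures with $M_0 = M$ and tangent vector $\dot{M}$. By the result of \citet{bonahon1998variations} the map $\Psi$ is $\mathscr{C}^1$, so $t \mapsto m_t = \Psi(M_t)$ is differentiable and determines a well-defined tangent vector $\dot{m} \in T_m \Teich(\partial \CC M)$. Theorem~\ref{thm:dual_bonahon_schlafli} then yields
\[
\dd{\Vol}^*_M(\dot{M}) = \frac{1}{2} \dd(L_\mu)_m(\dot{m}) ,
\]
so that passing to absolute values reduces the problem to bounding $\abs{\dd(L_\mu)_m(\dot{m})}$. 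Here $\dd(L_\mu)_m$ is a cotangent vector at $m$, which under the identifications of Section~\ref{section:preliminaries} is a holomorphic quadratic differential, and its pairing with $\dot{m}$ is the natural pairing $\scall{\cdot}{\cdot}$ between $Q(X)$ and $B(X)$. By the very definition of the dual norm $\norm{\cdot}^*_{B,q}$ we have
\[
\abs{\dd(L_\mu)_m(\dot{m})} \leq \norm{\dd(L_\mu)_m}^*_{B,q} \, \norm{\dot{m}}_{B,q} ,
\]
and Lemma~\ref{lemma:dual_norms} identifies $\norm{\dd(L_\mu)_m}^*_{B,q}$ with $\norm{\dd(L_\mu)_m}_{Q,p}$. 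Inserting the estimate of Proposition~\ref{prop:bound_differential_length}, namely $\norm{\dd(L_\mu)_m}_{Q,p} \leq L_\mu(m)^{1/p}\,C(r,q)\,D(m,\mu,r)^{1/q}$, and carrying along the factor $\tfrac{1}{2}$ from the variation formula, gives precisely the asserted inequality.

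The only genuinely delicate point — and the one I would be most careful about — is the bookkeeping for the two boundary components. The convex core boundary $\partial \CC M$ is a disjoint union of two copies of $\Sigma$, carrying the metrics $m^\pm$ and bending laminations $\mu^\pm$, whereas Proposition~\ref{prop:bound_differential_length} is stated for a single connected hyperbolic surface. To apply it I would read $L_\mu(m)$, $D(m,\mu,r)$ and the norm $\norm{\dot{m}}_{B,q}$ on the (disconnected) surface $\partial \CC M$, equivalently summing the $\pm$ contributions, and observe that the length-integral expression of Lemma~\ref{lem:derivative_length_integral} together with the covering and Fubini argument of Proposition~\ref{prop:bound_differential_length} extend verbatim to this setting, the two components being handled independently. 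Apart from this purely organizational matter, the proof is a direct chaining of the three cited results, so I do not expect any substantial obstruction.
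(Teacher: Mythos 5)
Your proof is correct and is essentially the paper's own argument: the paper treats this proposition as an immediate corollary of Theorem~\ref{thm:dual_bonahon_schlafli} and Proposition~\ref{prop:bound_differential_length}, chained together exactly as you do via the duality of Lemma~\ref{lemma:dual_norms}. Your explicit attention to the two boundary components of $\partial \CC M$ (reading $L_\mu(m)$, $D(m,\mu,r)$ and $\norm{\dot{m}}_{B,q}$ on the disconnected surface) is a detail the paper only addresses later, in the proof of Corollary~\ref{cor:bound_diff_dual_vol}, and is handled correctly.
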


In the remaining part of this section we describe a procedure to obtain a multiplicative factor in the above statement depending only on $p$ and the genus of $\Sigma$.

As we mentioned before, the lift of the boundary of the convex core is the union of two locally bent pleated planes $H^\pm$, which are embedded in $\Hyp^3$. This property turns out to determine uniform upper bounds of the quantities $L_\mu(m)$ and $D(m,\mu,r)$ appearing in the statement of Proposition \ref{prop:bound_differential_length}. The first results in this direction have been developed by Epstein and Marden in \cite{epstein2006fundamentals}. In our exposition, we will recall and make use of the works of \citet{bridgeman_brock_bromberg2017} and \citet{yarmola2016improved}, which will give us separate bounds for $L_\mu(m)$ and $D(m,\mu,r)$, respectively. We will also require $r$ to be less than $\ln(3)/2$. This restriction simplifies our argument in the proof of Corollary \ref{cor:bound_total_length}. However, we do not exclude the possibility that a joint study of the quantity $L_\mu(m)^{1/p} D(m,\mu,r)^{1/q}$ and a careful choice of $r$ might improve the multiplicative constants obtained here.

First we focus on the term $D(m,\mu,r)$, which we defined before the statement of Proposition \ref{prop:bound_differential_length}. Let $\tilde{\lambda}$ denote the geodesic lamination in $\widetilde{\Sigma}$ given by the lift of the support of the measured lamination $\mu$. Let $Q$ be a component of $\widetilde{\Sigma} \setminus \tilde{\lambda}$ and let $l_1$, $l_2$, $l_3$ be three boundary components of $Q$. We will use the following fact:

\begin{lemma}[{\cite[Corollary~II.2.4.3]{epstein2006fundamentals}}] \label{lemma:distance_boundaries}
Let $r < \ln(3)/2 = \arcsinh(1/ \sqrt{3})$, and suppose we have a point $x \in Q$ 
which is at distance $\leq \arcsinh(e^{- r})$ from both $l_2$ and $l_3$. Then its distance from $l_1$ is $> r$.
\end{lemma}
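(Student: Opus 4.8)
The plan is to lift everything to the universal cover $\widetilde{\Sigma} = \Hyp^2$, where $l_1,l_2,l_3$ become three pairwise disjoint complete geodesics and $Q$ is the convex intersection of the three half-planes $H_1,H_2,H_3$, where $H_i$ is the half-plane bounded by $l_i$ that contains $Q$. Since the distance from $x$ to each $l_i$ depends only on $x$ and $l_i$, I may discard every other boundary leaf and assume $Q = H_1 \cap H_2 \cap H_3$. Writing $s_i \defin \sinh d(x,l_i)$, the whole statement will follow from the sharp symmetric inequality
\[
s_1 s_2 + s_2 s_3 + s_3 s_1 \geq 1 ,
\]
valid for any point $x$ lying in a region bounded by three pairwise disjoint geodesics.

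First I would check that this inequality is an \emph{equality} for the ideal triangle, by an explicit computation in the upper half-plane model. Placing the vertices at $-1,1,\infty$, the three sides are $l_2 = \{\Re z = -1\}$, $l_3 = \{\Re z = 1\}$ and $l_1 = \{|z|=1\}$. Using the elementary identity $\sinh d(a+bi,\{\Re z = c\}) = |a-c|/b$, together with its analogue for $l_1$ (obtained after the orientation-preserving isometry $z \mapsto (z-1)/(z+1)$, which sends $l_1$ to the imaginary axis), one finds for $x = a+bi \in Q$ that
\[
s_2 = \frac{a+1}{b}, \qquad s_3 = \frac{1-a}{b}, \qquad s_1 = \frac{a^2+b^2-1}{2b} .
\]
A one-line elimination of $a,b$ then yields $s_1 = (1 - s_2 s_3)/(s_2 + s_3)$, i.e. $s_1 s_2 + s_2 s_3 + s_3 s_1 = 1$ identically on the ideal triangle.

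The crux of the argument — and the step I expect to be the genuine obstacle — is the comparison showing that the ideal triangle is \emph{extremal}: for fixed $d(x,l_2)$ and $d(x,l_3)$, the remaining distance $d(x,l_1)$ is smallest precisely when the three geodesics are pairwise asymptotic. I would argue this in two moves. First, with $l_2,l_3$ held fixed, pushing $l_1$ toward $x$ while keeping $l_2, l_3$ and $x$ inside $H_1$ can only be obstructed when $l_1$ becomes asymptotic to both $l_2$ and $l_3$, and this limiting position minimizes $d(x,l_1)$. Second, among disjoint pairs $l_2,l_3$ realizing the prescribed distances from $x$, the configuration forcing $l_1$ as close as possible is the degenerate one in which $l_2$ and $l_3$ share an ideal endpoint. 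Combining the two moves degenerates $Q$ to an ideal triangle, and the identity above upgrades to the inequality $s_1 s_2 + s_2 s_3 + s_3 s_1 \geq 1$ in general. Making these two monotonicity claims rigorous — i.e. controlling precisely how $d(x,l_1)$ varies as the geodesics are deformed while remaining disjoint — is the delicate point of the whole proof.

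Granting the inequality, the lemma follows by elementary monotonicity. The hypothesis $d(x,l_2), d(x,l_3) \leq \arcsinh(e^{-r})$ reads $s_2, s_3 \leq e^{-r}$, so
\[
\sinh d(x,l_1) = s_1 \geq \frac{1 - s_2 s_3}{s_2 + s_3} \geq \frac{1 - e^{-2r}}{2 e^{-r}} = \sinh r ,
\]
the middle step because $(u,v) \mapsto (1-uv)/(u+v)$ has negative partial derivatives $-(1+v^2)/(u+v)^2$ and $-(1+u^2)/(u+v)^2$ on $(0,\infty)$, hence is decreasing in each variable. Therefore $d(x,l_1) \geq r$. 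The restriction $r < \ln(3)/2 = \arcsinh(1/\sqrt{3})$ ensures $\arcsinh(e^{-r}) > r$, so that the hypothesis is non-vacuous and we stay in the regime where $l_1$ is genuinely the side \emph{opposite} the corner near which $x$ sits. The value $d(x,l_1) = r$ is attained only in the degenerate ideal-triangle configuration with $d(x,l_2) = d(x,l_3) = \arcsinh(e^{-r})$ exactly; in every other case the inequality is strict, giving $d(x,l_1) > r$ as claimed.
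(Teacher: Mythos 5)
Your argument has to be judged entirely on its own merits, since the paper contains no proof of this lemma at all: it is imported as a citation from Epstein--Marden \cite[Corollary~II.2.4.3]{canary_epstein_green2006}. Judged that way, it is incomplete, for precisely the reason you flag yourself. The reduction to the inequality $s_1s_2+s_2s_3+s_3s_1\ge 1$, the ideal-triangle computation (I checked the formulas for $s_1,s_2,s_3$ and the identity $s_1=(1-s_2s_3)/(s_2+s_3)$), and the final monotonicity step are all correct; but the inequality itself --- the extremality of the ideal triangle --- \emph{is} the content of the lemma, and your justification of it consists of two deformation claims that you explicitly decline to prove. That is a genuine gap, not a verification left to the reader. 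It can, however, be closed more cheaply than by your deformation scheme. Normalize so that the ideal endpoint of $l_2$ facing $l_1$ is $-1$ and that of $l_3$ facing $l_1$ is $+1$, and write $x=u+iv$. Since the three geodesics bound a common region, none separates the other two, so the ideal endpoints of $l_1$ lie in $[-1,1]$, those of $l_2$ in $[-\infty,-1]$, and those of $l_3$ in $[1,\infty]$. Consequently the half-plane $H_1^c$ cut off by $l_1$ away from $x$ is contained in the closed region under the unit semicircle, while $H_2^c\subseteq\{\Re z\le -1\}$ and $H_3^c\subseteq\{\Re z\ge 1\}$. Since $A\subseteq B$ implies $d(x,A)\ge d(x,B)$, your own formulas give $s_2\ge (u+1)/v$, $s_3\ge(1-u)/v$, and $s_1\ge (u^2+v^2-1)/(2v)$ whenever $u^2+v^2\ge 1$. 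If $x$ lies in the closed ideal triangle with vertices $-1,1,\infty$, these bounds, the monotonicity of $s_1s_2+s_2s_3+s_3s_1$ in each variable, and your identity give the inequality at once; if $u^2+v^2\le 1$ then already $s_2s_3\ge(1-u^2)/v^2\ge 1$; and if $u\ge 1$ (symmetrically $u\le -1$) then $s_1s_2\ge (u^2+v^2-1)(u+1)/(2v^2)\ge 2v^2/(2v^2)=1$. A single containment argument thus replaces both of your ``moves''.

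A second, smaller problem: your closing paragraph asserts something your own computation refutes. An ideal triangle is a perfectly legitimate plaque (for a maximal lamination every plaque is one), and for $x$ on its axis of symmetry with $d(x,l_2)=d(x,l_3)=\arcsinh(e^{-r})$ you computed $d(x,l_1)=r$, not $>r$; so the lemma as transcribed, with non-strict hypothesis and strict conclusion, fails at exactly this configuration, and what your method proves --- sharply, so nothing stronger is provable --- is $d(x,l_1)\ge r$, with equality only there. This is a strictness bookkeeping issue in the statement rather than a real obstruction, and it is harmless where the paper uses the lemma, in Corollary \ref{cor:bound_total_length}: there the leaves meet the ball of radius $r<\arcsinh(e^{-r})$, so the hypothesis holds with strict inequality and strictness propagates through your monotonicity step. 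But you should say this explicitly instead of claiming the strict conclusion ``as claimed''.
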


Following \cite{yarmola2016improved}, given $\tilde{\mu}$ a measured lamination on $\Hyp^2$, we denote by $\norm{\tilde{\mu}}_s$ the supremum over $\alpha$ of the transverse measure of $\tilde{\mu}$ along $\alpha$, where $\alpha$ varies among the geodesic arcs in $\Hyp^3$ of length $s > 0$ which are transverse to the support of $\tilde{\mu}$.

\begin{theorem}[\cite{yarmola2016improved}] \label{thm:yarmola_bound_measure}
Let $s \in (0,2 \arcsinh 1)$ and let $\tilde{\mu}$ be a measured lamination of $\Hyp^2$ so that the pleated plane with bending measure $\tilde{\mu}$ is embedded inside $\Hyp^3$. Then
\[
\norm{\tilde{\mu}}_s \leq 2 \arccos\left( - \sinh(s/2) \right) .
\]
\end{theorem}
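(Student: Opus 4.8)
The plan is to translate the embeddedness hypothesis into a constraint on the mutual position of the totally geodesic planes that support the pleated plane at the two ends of a maximally bent arc, and then to extract the bound from a hyperbolic-trigonometric computation carried out on the sphere at infinity $\partial \Hyp^3$.

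First I would reduce to a tractable extremal configuration. Approximating $\tilde\mu$ by laminations supported on finitely many weighted geodesics—for which embeddedness is a closed condition and the quantity $\norm{\cdot}_s$ passes to the limit—it suffices to bound $\norm{\tilde\mu}_s$ for finite $\tilde\mu$. For such a lamination the pleated plane is a union of totally geodesic plaques hinged along the leaves, and the transverse measure along a geodesic arc $\alpha$ of length $s$ equals the total bending angle $\theta$ accumulated across the leaves met by $\alpha$. I expect the worst case—the largest $\theta$ for fixed $s$ compatible with embeddedness—to occur when $\alpha$ meets the leaves orthogonally and the successive bendings take place coherently in a single rotational direction, so that the configuration is controlled by the two extreme support planes $\Pi_0,\Pi_1$ carrying the first and last plaques crossed by $\alpha$: these are at distance $\leq s$ and differ by the total rotation $\theta$.

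The key step is then geometric. Both $\Pi_0$ and $\Pi_1$ are support planes of the locally convex pleated plane, so embeddedness forces them to be disjoint (or tangent) inside $\Hyp^3$; equivalently, their boundary circles on $\partial \Hyp^3 \cong \hat{\C}$ must be disjoint or tangent. Placing the middle plaque in a standard plane and using that a rotation by the dihedral angle about a leaf acts on $\partial \Hyp^3$ by a M\"obius transformation fixing the two ideal endpoints of that leaf, I would write the two boundary circles explicitly as functions of $s$ and $\theta$ and impose tangency. The critical, just-embedded case is tangency at infinity, and solving the resulting equation should yield the relation $\cos(\theta/2) = -\sinh(s/2)$; since $\norm{\tilde\mu}_s = \theta$ cannot exceed the value at which this degeneration occurs, one obtains $\norm{\tilde\mu}_s \leq 2\arccos(-\sinh(s/2))$. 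The constraint $\sinh(s/2)\leq 1$, i.e. $s < 2\arcsinh 1$, is precisely the range in which this critical configuration exists.

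The hard part will be justifying the reduction of the previous paragraph: that the orthogonal, coherently bent, essentially two-plane model is genuinely extremal among all embedded pleated planes. For a general lamination the leaves met by $\alpha$ are neither parallel nor orthogonally crossed, so the bending composes as a product of rotations about skew axes in $\Hyp^3$ rather than as a planar turning, and one must show that every deviation from the model only \emph{decreases} the transverse measure admissible under disjointness of $\Pi_0,\Pi_1$. I would try to handle this either by a monotonicity/rearrangement argument—comparing the true composition of hinge rotations with the coplanar model—or, more robustly, by recasting the problem in the dual de Sitter space $\dS^3$, where support planes become points, the bending becomes the length of a spacelike arc, and embeddedness becomes an achronality condition, so that the extremal trigonometric problem can be solved intrinsically.
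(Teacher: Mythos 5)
First, a point of scope: the paper does \emph{not} prove this statement. It is imported verbatim from Yarmola \cite{yarmola2016improved} and used as a black box (only its consequence, Corollary \ref{cor:bound_total_length}, is derived here). So your attempt is not being measured against any argument in this text; it has to stand on its own, and it does not.

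The fatal problem is precisely the reduction you flag as ``the hard part,'' and it is worse than a gap: the two-plane, orthogonally crossed, coherently bent model is provably \emph{not} extremal, and the tangency computation you anticipate does not give the stated constant. Carry it out: put the middle plaque in a plane $\Pi_{\mathrm{mid}}$, let $Q$ be the plane containing the arc $\alpha$ and orthogonal to $\Pi_{\mathrm{mid}}$; then the leaves and both outer support planes are orthogonal to $Q$, so everything reduces to their traces on $Q \cong \Hyp^2$, and the formula $\cosh s = (1+\cos\beta_0\cos\beta_1)/(\sin\beta_0\sin\beta_1)$ for a triangle with one ideal vertex (with $\beta_i = \pi - \theta/2$) gives, at tangency, $\cos(\theta/2) = -\tanh(s/2)$, not $-\sinh(s/2)$. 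Thus your model caps the bending at $2\arccos(-\tanh(s/2)) = \pi + 2\arcsin(\tanh(s/2))$, which is strictly \emph{below} the theorem's bound --- and there are embedded configurations that beat it, so no monotonicity/rearrangement argument toward your model can exist. Concretely: in $Q \cong \Hyp^2$ (upper half-plane model) take the convex curve consisting of two vertical geodesic rays joined by a horizontal horocyclic segment of length $s$, and let $P$ be the union of the geodesics of $\Hyp^3$ orthogonal to $Q$ through its points. One checks that $P$ is intrinsically of curvature $-1$, hence an embedded pleated plane; its bending lamination has two atoms of weight $\pi/2$ at the creases plus continuous bending of density $1$ along the horocyclic part, and the orthogonal cross-section is an intrinsic geodesic. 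A geodesic arc of length $s+\varepsilon$ along it picks up transverse measure $\pi + s$, and $\pi + s > \pi + 2\arcsin(\tanh(s/2))$ for all $s>0$. So spreading the bending out \emph{increases} the admissible measure, contrary to your extremality hypothesis; the true extremum (whatever realizes, or approaches, $\pi + 2\arcsin(\sinh(s/2))$) must have distributed bending and is not captured by a two-support-plane picture.

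Two further remarks. Your opening reduction is also unsound as stated: embeddedness of pleated planes is not a closed condition under approximation of $\tilde\mu$ by finite laminations in either direction (finite approximations of an embedded pleated plane need not be embedded), so that step needs the standard workaround of proving a quantitative contrapositive. On the other hand, your fallback proposal --- dualizing to $\dS^3$, where support planes become points, bending becomes spacelike length, and embeddedness becomes a causal/achronality constraint --- is genuinely the right kind of idea and is much closer to arguments that succeed in this literature than your primary plan; but as written it is a direction, not a proof, and it is exactly where all the difficulty lives.
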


\begin{corollary} \label{cor:bound_total_length}
Let $\mu \in \MesLam(\Sigma)$ and $m \in \Teich^\hyp(\Sigma)$ be the bending measure and the hyperbolic metric, respectively, of the boundary of an incompressible hyperbolic end inside a hyperbolic convex co-compact $3$-\hsk manifold. Then for every $r < \ln(3)/2$ we have 
\[
D(m,\mu,r) \leq 4 r \arccos\left( - \sinh r \right) .
\]
Moreover, for every $\varepsilon > 0$ there exists $m_\varepsilon \in \Teich^\hyp(\Sigma)$ and $\mu_\varepsilon \in \MesLam(\Sigma)$ as above verifying
\[
D(m_\varepsilon,\mu_\varepsilon,r) \geq 2 (\pi - \varepsilon) r \qquad \forall r > 0 .
\]
\begin{proof}
Let $g$ be a hyperbolic metric in the equivalence class $m \in \Teich^\hyp(\Sigma)$. We denote by $(\widetilde{\Sigma},\tilde{g}) \rightarrow (\Sigma,g)$ the Riemannian universal cover of $(\Sigma,g)$ and by $\tilde{\lambda}$ the support of the lift $\tilde{\mu}$ of the measured lamination $\mu$ to $\widetilde{\Sigma}$. Given a point $\tilde{x}$ in $\widetilde{\Sigma}$ and a positive $r < \ln(3)/2$, we are looking for an upper bound of the length of $\tilde{\mu} \cap B_r(\tilde{x})$, where $B_r(\tilde{x})$ denotes the metric ball of radius $r$ at $\tilde{x}$.

The convenience of considering $r < \ln(3)/2$ comes from Lemma \ref{lemma:distance_boundaries}: under this hypothesis, any complementary region $Q$ of the geodesic lamination $\tilde{\lambda}$ at distance less than $r$ from $x$ has at most two components of its boundary intersecting $B_r(x)$. A simple argument proves that, if this happens, we can find a geodesic path $\alpha$ of length $< 2 r$ that intersects all the leaves of $\tilde{\lambda} \cap B_r(\tilde{x})$. Each leaf of $\tilde{\lambda} \cap B_r(\tilde{x})$ has length $< 2 r$, therefore the length of $\tilde{\mu} \cap B_r(\tilde{x})$ is bounded by $2 r$ (the length of each leaf) times the total mass $\tilde{\mu}(\alpha)$, which can be estimated applying Theorem \ref{thm:yarmola_bound_measure} with $s = 2r < \ln 3 < 2 \arcsinh 1$. This proves the first part of the statement\footnote{See also Remark \ref{rmk:improve_multipl_const}.}.

For what concerns the last part of the assertion, we fix a simple closed curve $\gamma$ and we assign it the weight $\pi - \varepsilon$. By the work of \citet{bonahon_otal2004laminations}, we can find a quasi-\hsk Fuchsian manifold $M_\varepsilon$ realizing $(\pi - \varepsilon) \gamma$ as the bending lamination of the upper component of the boundary of the convex core $\partial^+ \CC M_\varepsilon$. It is immediate to check that, if $m_\varepsilon$ is the hyperbolic metric of $\partial^+ \CC M_\varepsilon$, then $D(m_\varepsilon,\mu_\varepsilon,r) \geq 2 (\pi - \varepsilon) r$ for all $r > 0$.
\end{proof}
\end{corollary}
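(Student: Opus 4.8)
The plan is to establish the two assertions separately: the upper bound by a local analysis of the lamination inside a small metric ball, and the lower bound by an explicit construction. For the upper bound I would fix a point $\tilde{x} \in \widetilde{\Sigma}$ and estimate $\iint_{\tilde{\lambda} \cap B_r(\tilde{x})} \dd{\tilde{\length}} \dd{\tilde{\mu}}$ by separating the arclength contribution of each leaf from the transverse measure. Since metric balls in $\Hyp^2$ are convex, every leaf of $\tilde{\lambda}$ meets $B_r(\tilde{x})$ in a single geodesic chord of length at most the diameter $2r$. Hence the integral is at most $2r$ times the total transverse mass of the leaves meeting the ball, and to control that mass I would produce a single geodesic arc $\alpha$, transverse to $\tilde{\lambda}$ and of length $< 2r$, crossing every such leaf. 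Extending $\alpha$ to length exactly $2r$ only increases the measure it crosses, so $\tilde{\mu}(\alpha) \leq \norm{\tilde{\mu}}_{2r}$, and Yarmola's Theorem \ref{thm:yarmola_bound_measure} applies with $s = 2r$, which is admissible because $2r < \ln 3 < 2 \arcsinh 1$. This yields
\[
\iint_{\tilde{\lambda} \cap B_r(\tilde{x})} \dd{\tilde{\length}} \dd{\tilde{\mu}} \leq 2r \cdot 2 \arccos(- \sinh r),
\]
and taking the supremum over $\tilde{x}$ gives the claimed bound $D(m,\mu,r) \leq 4 r \arccos(- \sinh r)$.

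The construction of the transversal $\alpha$ is where the hypothesis $r < \ln(3)/2$ enters, and I expect this to be the main obstacle. The goal is to show that inside such a small ball the leaves are organized like a stack of nearly parallel arcs, so that one transversal can cross all of them without having to turn. Concretely, I would argue from Lemma \ref{lemma:distance_boundaries} that no plaque $Q$ of $\widetilde{\Sigma} \setminus \tilde{\lambda}$ can have three of its boundary leaves simultaneously meeting $B_r(\tilde{x})$: if a point of $Q$ lay within distance $r$ of three distinct boundary geodesics, then two of them would be at distance $\leq \arcsinh(e^{-r})$ from that point, forcing the third to be at distance $> r$, a contradiction. Thus each plaque contributes at most two leaves to the ball, which prevents any branching of the local leaf pattern; the leaves met by $B_r(\tilde{x})$ are then linearly ordered, and consecutive ones are separated by a single intervening plaque, so they can be joined by one geodesic transversal $\alpha \subset B_r(\tilde{x})$ of length at most $2r$. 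Making the passage from ``at most two boundary leaves per plaque'' to ``a single transversal of bounded length'' fully rigorous is the delicate point, and I would treat it carefully here.

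For the lower bound I would make the estimate nearly sharp by taking $\mu_\varepsilon$ to be a single simple closed geodesic $\gamma$ carrying the weight $\pi - \varepsilon$. Since this weight is strictly less than $\pi$, the realization theorem of \citet{bonahon_otal2004laminations} furnishes a quasi-Fuchsian manifold $M_\varepsilon$ in which $(\pi - \varepsilon)\gamma$ is exactly the bending lamination of the upper boundary component of the convex core; let $m_\varepsilon$ be the induced hyperbolic metric. In the universal cover the support $\tilde{\lambda}$ is a disjoint union of geodesic lines, each carrying transverse mass $\pi - \varepsilon$, so for any $\tilde{x}$ the integral reduces to $(\pi - \varepsilon)$ times the total length of $\tilde{\lambda} \cap B_r(\tilde{x})$. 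Choosing $\tilde{x}$ on a lift of $\gamma$, the ball contains the diametral chord of that lift, of length exactly $2r$, whence $D(m_\varepsilon,\mu_\varepsilon,r) \geq 2 (\pi - \varepsilon) r$ for every $r > 0$. Comparing with the upper bound, whose constant $4 \arccos(-\sinh r)$ tends to $2\pi$ as $r \to 0$, this shows the estimate is asymptotically sharp in the leading order as $r \to 0$ and $\varepsilon \to 0$.
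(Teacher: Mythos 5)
Your proposal follows essentially the same route as the paper's proof: convexity of the ball bounds each chord by $2r$, Lemma \ref{lemma:distance_boundaries} rules out three boundary leaves of a plaque meeting $B_r(\tilde{x})$, a single geodesic transversal of length $< 2r$ then crosses all leaves and its mass is bounded by Theorem \ref{thm:yarmola_bound_measure} with $s = 2r$, while the lower bound uses the same Bonahon--Otal realization of $(\pi - \varepsilon)\gamma$ as a bending lamination. The step you flag as delicate (passing from ``at most two boundary leaves per plaque'' to a single short transversal) is exactly the step the paper dismisses as ``a simple argument,'' so your write-up matches the paper's proof in both structure and level of detail.
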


For the bound of the term $L_\mu(m)$, we will apply the following result:

\begin{theorem}[{\cite[Theorem~2.16]{bridgeman_brock_bromberg2017}}] \label{thm:bound_length}
Let $\mu \in \MesLam(\Sigma)$ and $m \in \Teich^\hyp(\Sigma)$ be the bending measure and the hyperbolic metric, respectively, of the boundary of an incompressible hyperbolic end inside a hyperbolic convex co-compact $3$-\hsk manifold. Then
\[
L_\mu(m) \leq 6 \pi \abs{\chi(\Sigma)} .
\]
\end{theorem}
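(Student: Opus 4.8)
The plan is to transport the estimate to the ideal boundary of the end, where univalence makes Nehari's bound available, and then return to the convex core through the Thurston correspondence and Gauss--Bonnet. Let $\Omega \subset \C\Proj^1$ be the component of the domain of discontinuity facing the end, and let $\sigma$ be the complex projective structure it induces on the quotient surface; write $c$ for its underlying conformal structure and $\rho_\infty$ for the hyperbolic metric in the class $c$. Because the end is incompressible, $\Omega$ is simply connected, so the developing map of $\sigma$ is univalent; Nehari's theorem then bounds its Schwarzian derivative $S$ by $\norm{S}_{Q,\infty} \le 3/2$ with respect to $\rho_\infty$. The first structural input is the classical fact (Thurston, Kulkarni--Pinkall) that the Thurston decomposition of $\sigma$ reproduces exactly the convex-core data: $\sigma$ is the projective grafting of $m$ along $\mu$, and its Thurston metric $\rho_{\mathrm{Th}}$ is isometric to $m$ off the bending locus and flat along it.

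The second step is a Gauss--Bonnet area identity. The hyperbolic part of $\rho_{\mathrm{Th}}$ carries area $2\pi\abs{\chi(\Sigma)}$, while grafting inserts flat cylinders whose total area equals the length of the lamination, so that
\[
\Area(\rho_{\mathrm{Th}}) = 2\pi\abs{\chi(\Sigma)} + L_\mu(m) .
\]
Since $\rho_{\mathrm{Th}}$ and $\rho_\infty$ belong to the same conformal class $c$, a pointwise comparison of conformal factors $\rho_{\mathrm{Th}} \le \kappa\,\rho_\infty$ upgrades to the area comparison $\Area(\rho_{\mathrm{Th}}) \le \kappa^2\,\Area(\rho_\infty) = \kappa^2 \cdot 2\pi\abs{\chi(\Sigma)}$.

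The quantitative heart is the comparison with sharp constant $\kappa = 2$, and this is exactly where the Nehari bound is spent. Through the Kulkarni--Pinkall description of $\rho_{\mathrm{Th}}$ as built from the maximal round disks of $\sigma$, the estimate $\norm{S}_{Q,\infty} \le 3/2$ controls the size of these disks and yields $\rho_{\mathrm{Th}} \le 2\,\rho_\infty$ (this is the metric counterpart of the Sullivan--Epstein--Marden fact that the nearest-point retraction from $\Omega/\Gamma$ to the boundary of the convex core is $2$-Lipschitz). Granting $\kappa = 2$, the area comparison gives $\Area(\rho_{\mathrm{Th}}) \le 8\pi\abs{\chi(\Sigma)}$, and subtracting the area identity yields $L_\mu(m) \le 6\pi\abs{\chi(\Sigma)}$.

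The identification of the Thurston data with the convex-core data and the Gauss--Bonnet bookkeeping are essentially formal, so I expect the main obstacle to be establishing the pointwise bound $\rho_{\mathrm{Th}} \le 2\,\rho_\infty$ with the optimal constant from Nehari's estimate, together with verifying that no spurious factor of $2$ enters the grafting normalization relating $\mu$ to $\sigma$ (which is what pins the final constant to $6\pi$ rather than $3\pi$).
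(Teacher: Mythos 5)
This statement is not proved in the paper at all: it is imported verbatim, with citation, as Theorem 2.16 of Bridgeman--Brock--Bromberg, so there is no internal proof to compare against. Your proposal is, in substance, a reconstruction of the argument in that cited source (which the paper's introduction alludes to when it notes that the bounds of \cite{bridgeman_brock_bromberg2017} are "proved using Nehari's bound"): identify the projective structure at infinity of the incompressible end with the grafting of $m$ along $\mu$ via Thurston's correspondence, use the area identity $\Area(\rho_{\mathrm{Th}}) = 2\pi\abs{\chi(\Sigma)} + L_\mu(m)$ for the Thurston metric, and spend univalence on the pointwise comparison $\rho_{\mathrm{Th}} \leq 2\,\rho_\infty$, whence $2\pi\abs{\chi(\Sigma)} + L_\mu(m) \leq 4 \cdot 2\pi\abs{\chi(\Sigma)}$. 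The one step you assert rather than prove --- the factor-$2$ comparison between the Thurston (Kulkarni--Pinkall) metric and the Poincar\'e metric of a simply connected domain --- is indeed the quantitative heart, and it is a known sharp theorem (extremal for the slit plane/Koebe map, and the metric counterpart of Epstein--Marden's $2$-Lipschitz nearest-point retraction), so your outline is correct and matches the source's route rather than offering a genuinely different one.
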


\noindent Finally, given $p \in (1,\infty)$ and $r < \ln(3)/2$, we set
\begin{align*}
K(r,p) & \defin \frac{1}{2} (24 \pi)^{1/p} \ C(r,q) \ (4 r \arccos(- \sinh r))^{1/q} \\
& = \frac{1}{2} (24 \pi)^{1/p} \left( \frac{2 q - 1}{\pi} \frac{(\cosh(r/2))^{4 q - 2}}{(\cosh(r/2))^{4 q - 2} - 1} \ r \arccos(- \sinh r) \right)^{1/q} ,
\end{align*}
where $C(r,q)$ was defined in equation \ref{eq:mult_const_pointwise}. We define also
\[
K(r,1) = 12 \pi, \qquad K(r,\infty) = \frac{r \arccos(- \sinh r)}{2 \pi \tanh^2(r/2)} .
\]
\begin{corollary} \label{cor:bound_diff_dual_vol}
In the same notations of Proposition \ref{prop:diff_dual_volume_intermediate}, for every $p \in [1,\infty]$ we have
\[
\abs{\dd{V_\CC^*}(\dot{M})} \leq K(p) (g - 1)^{1/p} \norm{\nu}_{B,q} ,
\]
where $K(p) \defin K(\ln(3)/2,p)$ and $\nu$ denotes the harmonic Beltrami differential representing the variation of the hyperbolic metrics on the boundary of the convex core $\partial \CC M$ of $M$. We have in particular that $K(2) \approx 10.3887$.
\begin{proof}
We combine Proposition \ref{prop:diff_dual_volume_intermediate}, Corollary \ref{cor:bound_total_length} and Theorem \ref{thm:bound_length} on the upper and lower components of $\partial \CC M = \partial \CC M_0$, and then we take the limit as $r$ goes to $\ln(3)/2$.
\end{proof}
\end{corollary}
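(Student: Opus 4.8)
The plan is to feed the two uniform estimates of Theorem~\ref{thm:bound_length} and Corollary~\ref{cor:bound_total_length} into the pointwise inequality of Proposition~\ref{prop:diff_dual_volume_intermediate}, and then to produce the stated constant by letting $r$ approach the endpoint $\ln(3)/2$ of its admissible range. Proposition~\ref{prop:diff_dual_volume_intermediate} already yields, for every $r>0$ and every pair of conjugate exponents $p,q$,
\[
\abs{\dd{\Vol}^*_M(\dot M)} \leq \frac{1}{2}\, L_\mu(m)^{1/p}\, C(r,q)\, D(m,\mu,r)^{1/q}\, \norm{\dot m}_{B,q},
\]
so it remains only to replace $L_\mu(m)$ and $D(m,\mu,r)$ by quantities depending solely on $g$ and $r$, where $m$ and $\mu$ are the induced metric and bending lamination of the whole boundary $\partial\CC M$.

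For the length term I would apply Theorem~\ref{thm:bound_length} separately to the two boundary components $\partial^\pm\CC M$, each of which bounds an incompressible hyperbolic end of $M$. Since $\abs{\chi(\Sigma)}=2(g-1)$, each component contributes at most $6\pi\abs{\chi(\Sigma)}=12\pi(g-1)$, and summing over the two components gives $L_\mu(m)=L_{\mu^+}(m^+)+L_{\mu^-}(m^-)\leq 24\pi(g-1)$; this is exactly the factor $(24\pi)^{1/p}$ in the definition of $K(r,p)$. For the thickness term I would instead invoke Corollary~\ref{cor:bound_total_length} on each component, valid for $r<\ln(3)/2$. Here no doubling occurs: $D(m,\mu,r)$ is a supremum over the universal cover of $\partial\CC M$, hence equals the larger of the two component values, each bounded by $4r\arccos(-\sinh r)$.

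Substituting these two estimates into the displayed inequality gives, for every $r<\ln(3)/2$,
\[
\abs{\dd{\Vol}^*_M(\dot M)} \leq \frac{1}{2}(24\pi)^{1/p}\, C(r,q)\, \bigl(4r\arccos(-\sinh r)\bigr)^{1/q}\,(g-1)^{1/p}\,\norm{\dot m}_{B,q},
\]
whose right-hand side is precisely $K(r,p)\,(g-1)^{1/p}\,\norm{\dot m}_{B,q}$. As the left-hand side is independent of $r$ while $r\mapsto K(r,p)$ is continuous, I would pass to the limit $r\to\ln(3)/2$ to obtain the claimed bound with $K(p)=K(\ln(3)/2,p)$. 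The degenerate endpoints fall out of the conventions: for $p=1$ one has $q=\infty$, so $C(r,\infty)=1$ and the $D$-factor carries exponent $0$, leaving $K(r,1)=12\pi$ independently of $r$; for $p=\infty$ one has $q=1$ and $(24\pi)^0=1$, while $C(r,1)=1/\bigl(4\pi\tanh^2(r/2)\bigr)$ reproduces the formula for $K(r,\infty)$. The three numerical values $K(1)=12\pi$, $K(2)\approx 10.3887$ and $K(\infty)\approx 2.66216$ then follow by direct evaluation at $r=\ln(3)/2$, using $\sinh(\ln(3)/2)=1/\sqrt3$.

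I expect essentially no serious obstacle here, the result being a bookkeeping assembly of ingredients already in hand. The only point requiring care is the passage to $r=\ln(3)/2$: since Corollary~\ref{cor:bound_total_length} holds only under the strict inequality $r<\ln(3)/2$, I cannot set $r=\ln(3)/2$ outright but must rely on the continuity of $K(\cdot,p)$ to take the limit. The secondary subtlety is simply to keep track of the factor $2$ that enters the length bound (two boundary components, whence $24\pi$) but not the thickness bound $D$.
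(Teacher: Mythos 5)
Your proposal is correct and is essentially identical to the paper's (one-line) proof: you substitute Theorem \ref{thm:bound_length} applied to each of the two boundary components (giving $L_\mu(m)\leq 24\pi(g-1)$) and Corollary \ref{cor:bound_total_length} (with no doubling, since $D(m,\mu,r)$ is a supremum) into Proposition \ref{prop:diff_dual_volume_intermediate}, and then let $r \to \ln(3)/2$ using continuity of $r \mapsto K(r,p)$, exactly as the paper does. Your handling of the endpoint conventions for $p=1$ and $p=\infty$ and the bookkeeping of the factor $24\pi$ versus the undoubled $D$-term match the definition of $K(r,p)$ and the stated numerical values.
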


\noindent We can compare this statement with the analogous bound for the differential of the renormalized volume:
\begin{theorem}[{\cite{schlenker2013renormalized}}]
Let $\mappa{\RVol}{\QF(\Sigma)}{\R}$ denote the function associating to each quasi-\hsk Fuchsian manifold $M$ its renormalized volume. Then for every $p \in [1,\infty]$ we have
\[
\dd{\RVol}(\dot{M}) \leq H(p) (g - 1)^{1/p} \norm{\dot{c}}_{B,q} ,
\]
where $\dot{c}$ denotes the variation of the conformal structures at infinity of $M$, and where $H(p) \defin \frac{3}{2} (8 \pi)^{1/p}$.
\end{theorem}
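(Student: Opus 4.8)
The plan is to mirror the argument that yields Corollary~\ref{cor:bound_diff_dual_vol} for the dual volume, transplanting every ingredient from the convex core to the boundary at infinity. The role of the dual Bonahon--Schl\"afli formula (Theorem~\ref{thm:dual_bonahon_schlafli}) is now played by the first variation formula for the renormalized volume (see \cite{krasnov2008renormalized} and \cite[Theorem~1.2]{schlenker2017notes}): with the normalization of $\RVol$ fixed as in \cite{schlenker2013renormalized}, the differential $\dd{\RVol}_M$ is, up to sign, the real part of the natural pairing $\scall{\cdot}{\cdot}$ between the \emph{Schwarzian derivatives} $q^\pm$ of the complex projective structures at infinity on $\partial_\infty^\pm M$ and the harmonic Beltrami differentials representing the variation $\dot c = (\dot c^+,\dot c^-)$ of the conformal structures at infinity. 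Since each $q^\pm$ is a holomorphic quadratic differential on the Riemann surface $(\Sigma,c^\pm)$, hence a cotangent vector to $\Teich_c(\Sigma)$, one reads off
\[
\abs{\dd{\RVol}_M(\dot M)} \le \abs{\scall{q^+}{\dot c^+}} + \abs{\scall{q^-}{\dot c^-}} .
\]

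Next I would estimate each pairing by duality, exactly as in the step from Lemma~\ref{lem:derivative_length_integral} to Proposition~\ref{prop:bound_differential_length}. By Lemma~\ref{lemma:dual_norms} the norm on $Q(\Sigma,c^\pm)$ dual to $\norm{\cdot}_{B,q}$ is $\norm{\cdot}_{Q,p}$, so the H\"older inequality gives $\abs{\scall{q^\pm}{\dot c^\pm}} \le \norm{q^\pm}_{Q,p}\,\norm{\dot c^\pm}_{B,q}$. The remaining task is a genus-uniform bound on $\norm{q^\pm}_{Q,p}$, and this is where \emph{Nehari's theorem} replaces the bending estimates of \cite{yarmola2016improved} and \cite{bridgeman_brock_bromberg2017}. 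For a quasi-Fuchsian manifold each component of the domain of discontinuity is a Jordan domain, so its uniformizing map is univalent on the disk; Nehari's bound on the Schwarzian of a univalent function translates, in the hyperbolic metric $\rho^\pm$ of the class $c^\pm$, into the pointwise inequality $\abs{q^\pm} \le \tfrac{3}{2}\rho^\pm$, that is $\norm{q^\pm}_{Q,\infty} \le \tfrac{3}{2}$.

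I would then interpolate this sup bound against the area, just as $\norm{q}_{Q,\infty}$ is promoted to $\norm{q}_{Q,p}$:
\[
\norm{q^\pm}_{Q,p}^p = \int_{\partial_\infty^\pm M} \left( \frac{\abs{q^\pm}}{\rho^\pm} \right)^p \rho^\pm \,\dd{x}\,\dd{y} \le \left( \tfrac{3}{2} \right)^p \Area(\rho^\pm) = \left( \tfrac{3}{2} \right)^p 4\pi(g-1) ,
\]
using Gauss--Bonnet for the hyperbolic area $4\pi(g-1) = 2\pi\abs{\chi(\Sigma)}$ of each boundary surface. Hence $\norm{q^\pm}_{Q,p} \le \tfrac{3}{2}(4\pi(g-1))^{1/p}$. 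Summing the two ends and bounding $\norm{\dot c^+}_{B,q} + \norm{\dot c^-}_{B,q} \le 2^{1/p}\norm{\dot c}_{B,q}$ (the $\ell^1$--$\ell^q$ comparison on the two factors, where $\norm{\dot c}_{B,q}$ is the product norm of the pair) turns the two copies of the area into the factor $8\pi(g-1)$, and produces
\[
\abs{\dd{\RVol}_M(\dot M)} \le \tfrac{3}{2}\,(8\pi)^{1/p}\,(g-1)^{1/p}\,\norm{\dot c}_{B,q} ,
\]
which is the claim, with $H(p) = \tfrac{3}{2}(8\pi)^{1/p}$; the endpoints $p = 1,\infty$ follow by the same limiting argument used for $K(p)$.

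The main obstacle is the very first ingredient: establishing the renormalized-volume variation formula and, crucially, pinning down its multiplicative constant. In contrast to the dual Bonahon--Schl\"afli formula, whose elementary proof is recalled in \cite{mazzoli2018the_dual}, this formula rests on the analytic machinery behind $\RVol$ --- the Fefferman--Graham expansion and holographic renormalization, or equivalently the equidistant-foliation computation of Krasnov--Schlenker --- and it is precisely the normalization there that fixes the constant $1$ in front of the pairing and hence the exact value of $H(p)$. Once that formula and its constant are in place, the duality estimate, the Nehari bound, and the Gauss--Bonnet interpolation are entirely parallel to the dual-volume argument of Section~\ref{section:dual_volume}.
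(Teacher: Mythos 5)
Your proposal is correct, and it is essentially the proof of the cited reference: the paper itself states this theorem without proof (it is imported from \cite{schlenker2013renormalized}), and the argument there is exactly the scheme you describe --- the Krasnov--Schlenker first-variation formula pairing the Schwarzian at infinity with the Beltrami differential, Nehari's pointwise bound $\abs{q^\pm}\leq \tfrac{3}{2}\rho^\pm$, and the Gauss--Bonnet interpolation $\norm{q^\pm}_{Q,p}\leq \tfrac{3}{2}(4\pi(g-1))^{1/p}$, combined over the two ends via the $\ell^1$--$\ell^q$ comparison with factor $2^{1/p}$, which reproduces $H(p)=\tfrac{3}{2}(8\pi)^{1/p}$ exactly. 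This is also the template that the paper deliberately mirrors for the dual volume (Proposition \ref{prop:diff_dual_volume_intermediate} and Corollary \ref{cor:bound_diff_dual_vol}), with the bending-measure bounds of \cite{bridgeman_brock_bromberg2017} and \cite{yarmola2016improved} playing the role you correctly assign to Nehari's theorem, so your reconstruction and the dependency you flag (the normalization of the variation formula) are both accurate.
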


\begin{remark} \label{rmk:improve_multipl_const}
From the first part of the proof of Corollary \ref{cor:bound_total_length} is clear that our estimate of the constant $D(m,\mu,r)$ is far from being optimal. However, using the second part of the assertion, it is easy to see that the possible improvement of the constant $K(2)$ is not enough to make the multiplicative constant in Theorem \ref{thm:bound_dual_volume_WP} to be less than $3 \sqrt{\pi}$, which is the one appearing in the analogous statement for the renormalized volume. Because of this, we preferred to present a simpler but rougher argument.
\end{remark}

\section{Dual volume and Weil-Petersson distance} \label{section:dual_volume_WP_distance}

This section is dedicated to the proof of the linear upper bound of the dual volume of a quasi-\hsk Fuchsian manifold $M$ in terms of the Weil-Petersson distance between the hyperbolic structures on the boundary of its convex core $\CC M$. As we mentioned in Section \ref{section:dual_volume}, the data of the hyperbolic metrics of $\partial \CC M$ is only conjectured to give a parametrization of the space of quasi-\hsk Fuchsian manifolds, contrary to what happens with the conformal structures at infinity. In particular, the same strategy used in \cite{schlenker2013renormalized} to bound the renormalized volume cannot be immediately applied. In order to overcome this problem, we will take advantage of the foliation by constant Gaussian curvature surfaces ($k$-\hsk surfaces) of $M \setminus \CC M$, whose existence has been proved by \citet{labourie1991probleme} (see also Remark \ref{rmk:duality_hyp_desitter}). The space of hyperbolic structures with strictly convex boundary on $\Sigma \times [0,1]$ is parametrized by the data of the metrics on its boundary, as proved in \cite{schlenker2006hyperbolic}. In particular, the Teichm\"uller classes of the metrics of the upper and lower $k$-surfaces parametrize the space of quasi-\hsk Fuchsian structures of topological type $\Sigma \times \R$. Moreover, the first order variation of the dual volume of the region $M_k$ encosed between the two $k$-\hsk surfaces is intimately related to the notion of landslide, which was first introduced and studied in \cite{bonsante2013a_cyclic}, \cite{bonsante2015a_cyclic}. This connection will be very useful to relate the first order variation of $V_\CC^*(M)$ and of $\Vol^*(M_k)$, as $k$ goes to $-1$, allowing us to prove Theorem \ref{thm:bound_dual_volume_WP} using an approximation argument, together with the bounds obtained in the previous Section.

\subsection{Constant Gaussian curvature surfaces}

\noindent The existence of the foliation by constant Gaussian curvature surfaces is guaranteed by the following result:
\begin{theorem}[{\cite[Th\'eor\`eme~2]{labourie1991probleme}}] \label{thm:k_surfaces_foliation}
Every geometrically finite $3$-\hsk dimensional hyperbolic end $E$ is foliated by a family of strictly convex surfaces $(\Sigma_k)_k$ with constant curvature $k \in (-1,0)$. As $k$ goes to $-1$, the surface $\Sigma_k$ converges to the locally convex pleated boundary of $E$, and as $k$ goes to $0$, $\Sigma_k$ approaches the conformal boundary at infinity $\partial_\infty E$.
\end{theorem}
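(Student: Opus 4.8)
The plan is to realize the constant-curvature condition as a fully nonlinear elliptic PDE and to solve it by the method of continuity in the curvature parameter $k$, then to upgrade the resulting family of solutions to an honest foliation via the geometric maximum principle. First I would set up the equation. By the Gauss equation, a surface immersed in $\Hyp^3$ with principal curvatures $\kappa_1,\kappa_2$ has intrinsic curvature $K = -1 + \kappa_1\kappa_2$, so prescribing constant Gaussian curvature $k\in(-1,0)$ is equivalent to the \emph{extrinsic} condition $\kappa_1\kappa_2 = 1+k \in (0,1)$, a condition on the determinant of the shape operator. Writing a convex surface inside the end $E$ as the normal graph of a scalar height function $u$ over a fixed reference surface (for instance an equidistant surface of the concave pleated boundary, or over $\partial_\infty E$ via the Epstein map), this determinant condition becomes a Monge--Amp\`ere type equation of the schematic form $\det(\nabla^2 u + A[u]) = (1+k)\,\det(B[u])$, which is fully nonlinear and elliptic precisely on the cone of strictly convex $u$.

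I would then run the method of continuity over $k\in(-1,0)$. Near $k=0$ the solutions are close to $\partial_\infty E$, where the geometry is explicitly controlled (horosphere-like, $\kappa_1\kappa_2\to 1$), which furnishes a starting solution. For \emph{openness} I would linearize: the linearization at a strictly convex constant-curvature solution is an elliptic Jacobi-type operator whose zeroth-order term has the favorable sign, hence it is invertible by the maximum principle (strict stability), and the implicit function theorem produces solutions for nearby $k$. For \emph{closedness} the crux is the a priori estimates: the $C^0$ bound comes from geometric barriers (the locally concave pleated boundary bounds the surfaces from one side and $\partial_\infty E$ from the other), the gradient bound follows from the convexity and the $C^0$ control, and the decisive second-order (curvature) estimate follows from a maximum-principle argument applied to the largest principal curvature, adapted to the hyperbolic ambient.

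Finally I would promote the family $(\Sigma_k)_k$ to a foliation. The geometric maximum principle shows that two closed convex surfaces of constant curvatures $k<k'$ cannot be tangent with the wrong relative position, so the $\Sigma_k$ are pairwise disjoint, embedded, and monotonically nested in $k$; a connectedness and exhaustion argument then shows that they sweep out all of $E$. The boundary behavior is read directly off the extrinsic constraint $\kappa_1\kappa_2 = 1+k$: as $k\to -1$ the product tends to $0$, forcing the surfaces to flatten extrinsically and collapse onto the pleated locally concave boundary, with the extrinsic curvature concentrating along the bending locus; as $k\to 0$ the product tends to $1$, and together with the upper barrier this pushes $\Sigma_k$ out to the conformal boundary $\partial_\infty E$. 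Both limits are made rigorous using the uniform barriers and a compactness argument built on the interior estimates.

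The main obstacle is the uniform second-order a priori estimate on the second fundamental form: without it the method of continuity fails to close up, and the limiting surface could either degenerate or lose regularity. This same estimate is what ultimately controls the two endpoint degenerations at $k=-1$ and $k=0$, so it is the single hardest analytic ingredient of the argument; the foliation and exhaustion statements, by contrast, are softer consequences of the maximum principle once the compact family of solutions is in hand.
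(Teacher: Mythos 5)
This theorem is not proved in the paper at all: it is quoted as Th\'eor\`eme~2 of \cite{labourie1991probleme}, and Remark \ref{rmk:duality_hyp_desitter} notes that \cite{barbot2011prescribing} gives an alternative proof via the hyperbolic--de Sitter duality. So your proposal has to be measured against Labourie's argument, and it takes a genuinely different route: Labourie does not run a method of continuity in $k$ on a Monge--Amp\`ere equation, but combines his compactness theory for surfaces of constant extrinsic curvature (elliptic isometric immersions reinterpreted as pseudo-holomorphic curves, cf.\ \cite{labourie1992surfaces}) with a deformation argument in which the Fuchsian case --- where the equidistant surfaces from the totally geodesic core are explicit umbilic $k$-surfaces realizing every $k \in (-1,0)$ --- supplies the starting configuration.

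Judged on its own terms, your sketch has one gap that would make it fail as written: the continuity method in $k$ has no starting point. You claim that near $k=0$ the ``horosphere-like'' geometry near $\partial_\infty E$ ``furnishes a starting solution'', but it furnishes only approximate solutions: the equidistant surface at distance $t$ from the convex core has principal curvatures lying in $[\tanh t, \coth t]$, hence Gaussian curvature pinched in $[\tanh^2 t - 1, 0]$ but not constant, and converting an approximate solution into an exact one requires an implicit-function-theorem perturbation whose linearization must be inverted uniformly as $t \to \infty$ --- an analysis you have not supplied and which is not routine. The standard repair is to change the continuity parameter: fix $k$ and deform the hyperbolic structure along a path from the Fuchsian locus, where exact solutions exist explicitly; openness is then your linearization step and closedness is the curvature estimate over a compact family of structures, which is in the spirit of what Labourie and Barbot actually do. Two smaller points: the surfaces cannot literally be written as graphs over the reference you propose, since the pleated boundary is not smooth and the equidistant surfaces are only $\mathscr{C}^{1,1}$, so the Monge--Amp\`ere setup needs a smooth background surface; and the $k \to -1$ endpoint does not follow from ``$\kappa_1 \kappa_2 \to 0$ forces flattening'' --- the correct mechanism is the barrier comparison showing that a $k$-surface must lie within distance $\arctanh\sqrt{1+k}$ of the convex core (a $k$-surface tangent to the equidistant surface $S_t$ from the core side would satisfy $\kappa_1\kappa_2 \geq \tanh^2 t$ at the tangency point), which is exactly the estimate this paper invokes in the proof of Proposition \ref{prop:uniform_convergence_diff}.
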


A surface of constant Gaussian curvature $k$ embedded in some hyperbolic $3$-\hsk manifold is called a \emph{$k$-\hsk surface}. From the Gauss equations we see that the extrinsic curvature of a $k$-surface is equal to $k + 1$. Therefore, if $k$ is in $(-1,0)$, the principal curvatures have the same sign and never vanish. In particular the leaves of the foliation of Theorem \ref{thm:k_surfaces_foliation} are all convex surfaces.

Given a quasi-\hsk Fuchsian manifold $M$, we denote by $m_k^\pm(M) \in \Teich^\hyp(\Sigma)$ the isotopy classes of the hyperbolic metrics $- k \ \I_k^\pm$, where $\I_k^\pm$ is the first fundamental form of the upper/lower $k$-\hsk surface $\Sigma^\pm_k$ of $M$. Then for every $k \in (-1,0)$ we have maps
\[
\begin{matrix}
\Psi_k \vcentcolon & \QF(\Sigma) & \longrightarrow & \Teich^\hyp(\Sigma) \times \Teich^\hyp(\Sigma) \\
& M & \longmapsto & (m_k^-(M),m_k^+(M)) . \\
\end{matrix}
\]
The family of functions $(\Psi_k)_k$ is related to the maps $\Psi$ and $B$ we considered in Section \ref{section:dual_volume}. As $k$ goes to $-1$, $\Psi_k(M)$ converges to $\Psi(M)$, and as $k$ goes to $0$, $\Psi_k(M)$ converges to $B(M)$ (see \cite[Section~3]{labourie1992surfaces}). The convenience in considering the foliation by $k$-surfaces relies in the following result, based on the works of \citet{labourie1991probleme} and \citet{schlenker2006hyperbolic}:

\begin{theorem} \label{thm:ksurfaces_homeo}
The map $\Psi_k$ is a $\mathscr{C}^1$-\hsk diffeomorphism for every $k \in (-1,0)$.
\begin{proof}
Let $(N,\partial N)$ be a compact connected $3$-\hsk manifold admitting a hyperbolic structure with convex boundary. \citet{schlenker2006hyperbolic} proved that any Riemannian metric with Gaussian curvature $> - 1$ on $\partial N$ is uniquely realized as the restriction to the boundary of a hyperbolic metric on $N$ with smooth strictly convex boundary. In other words, if $\mathscr{G}$ and $\mathscr{H}$ denote the spaces of isotopy classes of metrics on $N$ with strictly convex boundary and of metrics on $\partial N$ with Gaussian curvature $> -1$, respectively, then the restriction map
\[
\begin{matrix}
r \vcentcolon & \mathscr{G} & \longrightarrow & \mathscr{H} \\
& [g] & \longmapsto & [g|_{\partial N}]
\end{matrix}
\]
is a homeomorphism. The surjectivity had already been established by Labourie in \cite{labourie1991probleme}, therefore the proof proceeds by showing the local injectivity of $r$. To do so, the strategy in \cite{schlenker2006hyperbolic} is to apply the Nash-Moser implicit function theorem.

Let us fix now a $k \in (-1,0)$, and consider $N = \Sigma \times I$. If $\mathscr{G}_k$ is the space of hyperbolic structures on $N$ whose boundary has constant Gaussian curvature equal to $k$, then $\mathscr{G}_k$ identifies with the space of quasi-\hsk Fuchsian manifolds $\QF(\Sigma)$, thanks to Theorem \ref{thm:k_surfaces_foliation} and the fact that any hyperbolic structure with convex boundary on $N$ uniquely extends to a quasi-\hsk Fuchsian structure (see e. g. \cite[Theorem~I.2.4.1]{canary2006fundamentals}). In addition, the space $\mathscr{H}_k$ of constant $k$ Gaussian curvature structures on $\partial N$ can be interpreted as the product of two copies of the Teichm\"uller space $\Teich^\hyp(\Sigma)$, one for each component of $\partial N$. Therefore the restriction of $r$ over $\mathscr{G}_k$ can be identified with $\Psi_k$. The map $\Psi_k$ is now a function between finite dimensional differential manifolds. The fact that $r$ verifies the hypotheses to apply the Nash-Moser inverse function theorem implies in particular that $\Psi_k$ verifies the hypotheses to apply the ordinary inverse function theorem between finite dimensional manifolds. In particular, this shows that $\Psi_k$ is a $\mathscr{C}^1$-\hsk diffeomorphism, for any $k \in (-1,0)$, as desired.
\end{proof}
\end{theorem}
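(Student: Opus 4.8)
The plan is to realize $\Psi_k$ as the restriction of the boundary-value map studied by Schlenker \cite{schlenker2006hyperbolic}, and then to upgrade its bijectivity to the $\mathscr{C}^1$-diffeomorphism property on each fixed-curvature slice via the nondegeneracy of its linearization. First I would set up the two identifications that convert $\Psi_k$ into a map between honest finite-dimensional manifolds. On the domain side I would use the $k$-surface foliation of Theorem \ref{thm:k_surfaces_foliation}: every quasi-Fuchsian manifold $M$ contains a unique upper and lower $k$-surface $\Sigma_k^\pm$, and the compact region $M_k \subset M$ they bound carries a hyperbolic metric on $N = \Sigma \times I$ whose boundary is strictly convex of constant Gaussian curvature $k$. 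Conversely, every such convex-boundary hyperbolic structure extends uniquely to a quasi-Fuchsian structure (see e.g.\ \cite[Theorem~I.2.4.1]{canary_epstein_green2006}), so the space $\mathscr{G}_k$ of constant-$k$-curvature convex hyperbolic metrics on $N$ is identified with $\QF(\Sigma)$. On the target side, a metric of constant Gaussian curvature $k \in (-1,0)$ is a fixed positive rescaling of a hyperbolic metric, so its isotopy class determines and is determined by a point of $\Teich_h(\Sigma)$; the two components of $\partial N$ then identify the space $\mathscr{H}_k$ of such boundary metrics with $\Teich_h(\Sigma) \times \Teich_h(\Sigma)$. Under these identifications $\Psi_k$ becomes exactly the restriction-to-the-boundary map $r_k \colon \mathscr{G}_k \to \mathscr{H}_k$.

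Next I would import Schlenker's theorem, which asserts that the unrestricted map $r \colon \mathscr{G} \to \mathscr{H}$ --- from all strictly convex hyperbolic metrics on $N$ to all boundary metrics of curvature $>-1$ --- is a homeomorphism, with surjectivity already due to Labourie \cite{labourie1991probleme}. The crucial feature I would extract is not merely the set-theoretic bijection but the analytic input behind it: Schlenker establishes local injectivity by verifying that the linearization $dr$ is an isomorphism, which is precisely the hypothesis needed to run the Nash--Moser implicit function theorem in the tame Fr\'echet category. I would stress that this linearized statement is considerably stronger than what the finite-dimensional conclusion demands.

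The diffeomorphism claim then follows by specializing to the fixed-curvature slice. Since $dr$ is an isomorphism, and the constant-curvature-$k$ conditions cut out on the domain and target finite-dimensional submanifolds $\mathscr{G}_k$ and $\mathscr{H}_k$ that correspond under $r$, the restricted differential $d(r_k)$ is again an isomorphism at every point. As $r_k$ is now a map between finite-dimensional smooth manifolds, the ordinary inverse function theorem replaces Nash--Moser, and I would conclude that $r_k = \Psi_k$ is a local $\mathscr{C}^1$-diffeomorphism; combined with the global bijectivity inherited from $r$, this yields that $\Psi_k$ is a $\mathscr{C}^1$-diffeomorphism for every $k \in (-1,0)$. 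The $\mathscr{C}^1$-regularity here is genuine --- unlike the limiting map $\Psi = \Psi_{-1}$, which by Bonahon \cite{bonahon1998variations} is only $\mathscr{C}^1$ and not $\mathscr{C}^2$ --- and stems from the smoothness of the $k$-surfaces when $k \in (-1,0)$.

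The step I expect to be the main obstacle is the transversality bookkeeping in the last paragraph: one must be sure that the tangent space to $\mathscr{G}_k$ inside $T\mathscr{G}$ maps isomorphically onto the tangent space to $\mathscr{H}_k$ inside $T\mathscr{H}$ under $dr$, i.e.\ that passing to the constant-curvature locus does not destroy the nondegeneracy of the linearization. This amounts to checking that the infinitesimal constant-$k$-curvature constraint on the boundary pulls back, through the isomorphism $dr$, to exactly the corresponding constraint on interior metrics, so that the two slices match dimension-for-dimension. Verifying this compatibility --- rather than the soft identifications of $\mathscr{G}_k$ and $\mathscr{H}_k$ --- is where the argument genuinely relies on the structure of Schlenker's linearized operator.
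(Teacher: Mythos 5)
Your proposal is correct and follows essentially the same route as the paper: identify $\Psi_k$ with the boundary-restriction map $r_k\colon \mathscr{G}_k \to \mathscr{H}_k$ via the $k$-surface foliation and the unique quasi-Fuchsian extension, invoke Schlenker's theorem (with Labourie's surjectivity) together with the Nash--Moser linearized isomorphism, and pass to the finite-dimensional constant-curvature slices where the ordinary inverse function theorem applies. Your closing remark on the transversality of the slices under $dr$ is a fair flag of the point the paper treats implicitly, and injectivity of the restricted differential plus the equality of the finite dimensions of $\mathscr{G}_k \cong \QF(\Sigma)$ and $\mathscr{H}_k \cong \Teich_h(\Sigma)^2$ settles it.
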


\subsection{The proof of Theorem \ref{thm:bound_dual_volume_WP}}

In the following we outline the proof of Theorem \ref{thm:bound_dual_volume_WP}. For every $k \in (-1,0)$, we consider the function
\[
V_k^* \vcentcolon \QF(\Sigma) \longrightarrow \R ,
\]
which associates with a quasi-\hsk Fuchsian manifold $M'$ the dual volume of the convex subset enclosed by the two $k$-\hsk surfaces of $M'$. By Theorem \ref{thm:ksurfaces_homeo} and by the dual differential Schl\"afli formula (see Theorem \ref{thm:dual_diff_schlafli} below), the function $V_k^* \circ \Psi_k^{-1}$ is $\mathscr{C}^1$ for every $k \in (- 1, 0)$. 

Let now $M$ be a fixed quasi-\hsk Fuchsian manifold. Since the Teichm\"uller space endowed with the Weil-Petersson metric is a unique geodesic space \cite{wolpert1987geodesic}, there exists a unique Weil-Petersson geodesic $\mappa{\beta_k}{[0,1]}{\Teich^\hyp(\Sigma)}$ verifying $\beta_k(0) = m_k^-$ and $\beta_k(1) = m_k^+$, where $m^\pm_k = m^\pm_k(M)$ are the hyperbolic structures on the upper and lower $k$-\hsk surfaces of $M$. We set $\gamma_k$ to be the path in $\Teich^\hyp(\Sigma) \times \Teich^\hyp(\Sigma)$ given by $\gamma_k(t) \defin (\beta_k(t),m_k^-)$. By construction $(\Psi_k^{-1} \circ \gamma_k)_k$ are paths of quasi-\hsk Fuchsian structures such that $(\Psi^{-1}_k \circ \gamma_k)(0) = \Psi_k^{-1}(m_k^-,m_k^-)$ is Fuchsian for every $k \in (-1,0)$, and $(\Psi_k^{-1} \circ \gamma_k)(1) = \Psi_k^{-1}(m_k^+,m_k^-)$ is equal to $M$. We now introduce the following notation: if $\varphi$ is a cotangent vector at a point $(p,q)$ of $\Teich^\hyp(\Sigma) \times \Teich^\hyp(\Sigma)$, then we denote by
\[
\varphi = \varphi^{+} + \varphi^{-} \in T^*_p \Teich^\hyp(\Sigma) \oplus T^*_q \Teich^\hyp(\Sigma) 
\]
its decomposition through the isomorphism $T_{(p,q)}^* \Teich^\hyp(\Sigma) \times \Teich^\hyp(\Sigma) \cong T^*_p \Teich^\hyp(\Sigma) \oplus T^*_q \Teich^\hyp(\Sigma)$.
For every $k$ we have
\begin{align*}
	\abs{V_k^*(M) - (V_k^* \circ \Psi_k^{-1} \circ \gamma_k)(0)} & = \abs{(V_k^* \circ \Psi_k^{-1} \circ \gamma_k)(1) - (V_k^* \circ \Psi_k^{-1} \circ \gamma_k)(0)} \\
	& = \abs{ \int_0^1 \dv{t} (V_k^* \circ \Psi_k^{-1} \circ \gamma_k)(t) \dd{t} } \\
	& \leq \int_0^1 \abs{\dd{(V_k^* \circ \Psi_k^{-1})}^+_{\gamma_k(t)} (\beta_k'(t)) + 0 } \dd{t} \\
& \leq \max_{t} \norm{\dd{(V_k^* \circ \Psi_k^{-1})}^+_{\gamma_k(t)}}_\WP \ \length_\WP(\beta_k) \\
& = \max_{t} \norm{\dd{(V_k^* \circ \Psi_k^{-1})}^+_{\gamma_k(t)}}_\WP \ d_\WP(m_k^+, m_k^-),
\end{align*}
where $\norm{\cdot}_\WP$ denotes the Weil-Petersson norm on $T^* \Teich^\hyp(\Sigma)$. The step from the second to the third line follows from the fact that the second component of the curve $\gamma_k$ does not depend on $t$, and in the last step we used that $\beta_k$ is a Weil-Petersson geodesic.

By Theorem \ref{thm:k_surfaces_foliation}, the $k$-\hsk surfaces of $M$ approach the boundary of the convex core $\partial \CC M$ as $k$ goes to $-1$. In particular, the isotopy classes of their (normalized) fundamental forms $(- k) \I_k$ converge to the hyperbolic structures on the boundary of the convex core (see \cite[Section 3.6]{labourie1992surfaces} or the proof of Lemma \ref{lem:compact} below for an explanation of this fact). Therefore we have
\[
\lim_{k \to -1} d_\WP(m_k^+,m_k^-) = d_\WP(m^+,m^-) ,
\]
where $m^+$, $m^-$ are the hyperbolic metrics of the upper and lower components of $\partial \CC M$, respectively. Similarly, being the dual volume continuous with respect to the Hausdorff topology on compact convex subsets of $M$ (see \cite[Proposition~2.3]{mazzoli2021the_dual}), we have
\[
\lim_{k \to -1} V_k^*(M) = V_\CC^*(M) .
\]
If $M'$ is a Fuchsian manifold homeomorphic to $\Sigma \times \R$, the $k$-\hsk surfaces of $M'$ coincide with the $\varepsilon_k$-\hsk equidistant surfaces from the convex core, where $\varepsilon_k = \tanh^{-1}(\sqrt{k + 1})$, and the dual volume $V_k^*(M')$ can be expressed as
\[
V_k^*(M') = - \pi \abs{\chi(\Sigma)} (\sinh 2\varepsilon_k - 2 \varepsilon_k ) 
\]
(see e. g. \cite[Proposition 2.4]{mazzoli2021the_dual}). In particular the term $V_k^*(M')$ is infinitesimal as $k$ goes to $-1$, uniformly in $M'$. These observations combined with the inequalities above allows us to deduce
\begin{equation} \label{eq:iintermediate_thm}
	\abs{V_\CC^*(M)} \leq \liminf_{k \to -1} \max_t \norm{\dd{(V_k^* \circ \Psi_k^{-1})}^+_{\gamma_k(t)}}_\WP \ d_\WP(m^+,m^-) .
\end{equation}
In order to simplify the notation, we now introduce maps
\[
\sigma_k \vcentcolon \QF(\Sigma) \longrightarrow T^* \Teich^\hyp(\Sigma) \times \Teich^\hyp(\Sigma) , \qquad \sigma \vcentcolon \QF(\Sigma) \longrightarrow T^* \Teich^\hyp(\Sigma) \times \Teich^\hyp(\Sigma) ,
\]
defined as
\begin{equation} \label{eq:def_sigmak}
	\sigma_k^\pm(M) \defin \dd{(V_k^* \circ \Psi_k^{-1})}^{\pm}_{\Psi_k(M)} \in T^*_{m_k^\pm(M)} \Teich^\hyp(\Sigma),
\end{equation}
and
\begin{equation} \label{eq:def_sigma}
	\sigma^\pm(M) \defin - \frac{1}{2} \dd{(L_{\mu^\pm(M)})}_{m^\pm(M)} \in T^*_{m^\pm(M)} \Teich^\hyp(\Sigma) ,
\end{equation}
where $m^\pm(M)$ and $\mu^\pm(M)$ denote the hyperbolic metric and the bending measured lamination on the upper/lower components of the boundary of the convex core of $M$, respectively, and $L_{\mu^\pm(M)}$ stands for the hyperbolic length function of $\mu^\pm(M)$ over the Teichm\"uller space of $\Sigma$. We now claim that Theorem \ref{thm:bound_dual_volume_WP} is a direct consequence of relation \eqref{eq:iintermediate_thm}, the results from the previous sections and the following facts:

\begin{lemma} \label{lem:compact}
	For any $k_0 \in (-1,0)$, the family of paths $\Psi_k^{-1} \circ \gamma_k$, as $k$ varies in $(-1,k_0]$, lies in a common compact subset of the space of quasi-\hsk Fuchsian manifolds $\QF(\Sigma)$.
\end{lemma}

\begin{proposition} \label{prop:uniform_convergence_diff}
	The maps $\sigma_k$, as functions on the space of quasi-\hsk Fuchsian structures $\QF(\Sigma)$ on $\Sigma \times \R$ with values in the cotangent space to Teichm\"uller space $T^* \Teich^\hyp(\Sigma) \times \Teich^\hyp(\Sigma)$, converge to $\sigma$ uniformly over all compact sets of $\QF(\Sigma)$ as $k$ goes to $-1$.
\end{proposition}

Assuming temporarily these facts, we describe how to obtain the desired statement. By Lemma \ref{lem:compact} we can find a compact subset of $\QF(\Sigma)$ containing the curves $\Psi_k^{-1} \circ \gamma_k$, for $k$ smaller than some fixed $k_0 \in (-1,0)$. Then, by Proposition \ref{prop:uniform_convergence_diff} we have
\[
\liminf_{k \to -1} \max_t \norm{\dd{(V_k^* \circ \Psi_k^{-1})}^+_{\gamma_k(t)}}_\WP = \liminf_{k \to -1} \max_t \norm{\sigma_k^+(\Psi_k^{-1} \circ \gamma_k(t))}_\WP \leq \sup_{\QF(\Sigma)} \norm{\sigma^+}_\WP .
\]
To control the supremum of $\norm{\sigma^+}_\WP$ over $\QF(\Sigma)$, we proceed similarly to what done in the proof of Corollary \ref{cor:bound_diff_dual_vol}. 
Consider $M'$ a quasi-\hsk Fuchsian manifold homeomorphic to $\Sigma \times \R$, with upper bending measure ${\mu'}^+ = \mu^+(M')$ and hyperbolic metric on the upper component of $\partial \CC M'$ equal to ${m'}^+ = m^+(M')$. First we apply Proposition \ref{prop:bound_differential_length} to bound the Weil-Petersson norm of the differential of the length
\[
\norm{\sigma^+(M')}_\WP = \frac{1}{2} \norm{\dd{(L_{{\mu'}^+})}_{{m'}^+}}_{Q,2} \leq C(2,r) \ L_{{\mu'}^+}({m'}^+)^{1/2} \ D({\mu'}^+, {m'}^+, r)^{1/2} .
\]
Then we estimate the term $L_{{\mu'}^+}({m'}^+)^{1/2}$ using Theorem \ref{thm:bound_length}, and the coefficient $D({m'}^+, {\mu'}^+, r)^{1/2}$ via Corollary \ref{cor:bound_total_length}. Finally we take a limit for $r$ that goes to $\ln(3)/2$, obtaining
\[
	\norm{\sigma^+({M'})}_\WP \leq \frac{K(2)}{\sqrt{2}}(g - 1)^{1/2}
\]
for every quasi-Fuchsian manifold $M' \in \QF(\Sigma)$, where $K(2)$ is the constant appearing in Corollary \ref{cor:bound_diff_dual_vol} (the factor $1/\sqrt{2}$ is due to the fact that we applied Theorem \ref{thm:bound_length} only to the upper component of the boundary of the convex core). This remark, combined with relation \eqref{eq:iintermediate_thm}, finally shows that
\[
\abs{V_\CC^*(M)} \leq C (g - 1)^{1/2} \ d_\WP(m^+,m^-) ,
\]
where $C = K(2) / \sqrt{2} \approx 7.3459$, as stated in Theorem \ref{thm:bound_dual_volume_WP}.

\medskip

We can now focus on the proof of Lemma \ref{lem:compact} and Proposition \ref{prop:uniform_convergence_diff} that were applied above. We briefly remind the definition of the Thurston's asymmetric distance on Teichm\"uller space and a compactness criterion that will be useful in our argument. Given $h$ and $h'$ two hyperbolic metrics on $\Sigma$ and given $\mappa{\varphi}{\Sigma}{\Sigma}$ a diffeomorphism isotopic to the identity, we set $\textrm{Lip}(\varphi)$ to denote the Lipschitz constant of $\varphi$, i. e.
\[
\textrm{Lip}(\varphi) \defin \sup_{x \neq y} \frac{d_{h'}(\varphi(x), \varphi(y))}{d_h(x,y)} .
\]
Then we define $d_{\textrm{Th}}(h,h')$ to be
\[
d_{\textrm{Th}}(h,h') \defin \log \ \inf_{\varphi} \ \textrm{Lip}(\varphi) ,
\]
where the infimum is taken over all diffeomorphisms $\varphi$ isotopic to the identity. The quantity $d_{\textrm{Th}}(h,h')$ only depends on the isotopy classes of the metrics $h$ and $h'$, so the above definition provides a function on $\Teich^\hyp(\Sigma) \times \Teich^\hyp(\Sigma)$, which we continue to denote by $d_{\textrm{Th}}$. As shown by Thurston in \cite{thurston1998minimal}, $d_{\textrm{Th}}$ satisfies all the properties of a distance except for the symmetry: there are choices of hyperbolic structures $m, m' \in \Teich^\hyp(\Sigma)$ for which $d_{\textrm{Th}}(m,m')$ is different from $d_{\textrm{Th}}(m',m)$. The distance $d_{\textrm{Th}}(m,m')$ can be equivalently characterized in terms of the length spectra of the hyperbolic structures $m$ and $m'$. More precisely, we have that
\[
d_{\textrm{Th}}(m,m') = \sup_\gamma \frac{\length_{m'}(\gamma)}{\length_{m}(\gamma)} ,
\] 
where $\gamma$ varies among the isotopy classes of simple closed curves of $\Sigma $, and $\length_{m}(\gamma)$, $\length_{m'}(\gamma)$ denote the lengths of the $m$- and $m'$-\hsk geodesic representatives of $\gamma$. The Thurston distance $d_{\textrm{Th}}(m,m')$ can be considered as a Riemannian analogue of the notion of Teichm\"uller distance $d_\Teich(c,c')$, where the hyperbolic structures $m$, $m'$ are taking the roles of the conformal structures $c$, $c'$, and Lipschitz constant $\textrm{Lip}(\varphi)$ replaces the quasi-\hsk conformal dilatation of $\varphi$. A similar characterization in terms of the length spectra holds also for Teichm\"uller distance, where the hyperbolic length functions $\ell_m$, $\ell_{m'}$ are replaced by the extremal length functions with respect to $c$ and $c'$, respectively. We will make use of the following compactness criterion for the Thurston's distance, shown by \citet{papadopoulos2007thurston}:

\begin{theorem}[{\cite[Theorem~2]{papadopoulos2007thurston}}] \label{thm:compactness_thurston}
	For every sequence $(m_n)_n$ in $\Teich^\hyp(\Sigma)$, the following are equivalent:
	\begin{enumerate}
		\item The sequence $(m_n)_n$ leaves every compact subset in $\Teich^\hyp(\Sigma)$;
		\item for every $m' \in \Teich^\hyp(\Sigma)$, the distance $d_{\textrm{Th}}(m',m_n)$ goes to $+ \infty$;
		\item for every $m' \in \Teich^\hyp(\Sigma)$, the distance $d_{\textrm{Th}}(m_n,m')$ goes to $+ \infty$.
	\end{enumerate}
\end{theorem}
	We now have all the elements for the proof of Lemma \ref{lem:compact}.

\begin{proof}[Proof of Lemma \ref{lem:compact}]
	Following the notation introduced in Section \ref{section:dual_volume}, we denote by
	\[
	B \vcentcolon \QF(\Sigma) \longrightarrow \Teich^\conf(\Sigma) \times \Teich^\conf(\overline{\Sigma})
	\]
	the Bers' homeomorphism and by 
	\[
	\Psi \vcentcolon \QF(\Sigma) \longrightarrow \Teich^\hyp(\Sigma) \times \Teich^\hyp(\Sigma)
	\]
	the function that maps a quasi-\hsk Fuchsian manifold into the pair of hyperbolic structures on the boundary of its convex core. By the work of Sullivan \cite{sullivan1981travaux}, and Epstein-Marden \cite{epstein2006fundamentals}, we can find a universal constant $K > 0$ such that, for every quasi-Fuchsian manifold $M' \in \QF(\Sigma)$, there exists a $K$-\hsk quasiconformal homeomorphism isotopic to the identity between the conformal structures at infinity and the hyperbolic structures on the boundary of the convex core of $M'$ (see also \cite{yarmola2016improved} for improved bounds on the constant $K$). In particular, if $d_{\Teich}$ denotes the Teichm\"uller distance on $\Teich(\Sigma \sqcup \Sigma) =  \Teich(\Sigma) \times \Teich(\Sigma)$, we have that
	\[
	\sup_{M' \in \QF(\Sigma)} d_{\Teich}(B(M'), \Psi(M')) < \infty .
	\]
	Because the Teichm\"uller metric is complete and $B$ is a homeomorphism, the paths $(\Psi_k^{-1} \circ \gamma_k)_{k \leq k_0}$ lie in a common compact subset of $\QF(\Sigma)$ if and only if $(\Psi \circ \Psi_k^{-1} \circ \gamma_k)_{k \leq k_0}$ lie in a common compact subset of $\Teich^\hyp(\Sigma) \times \Teich^\hyp(\Sigma)$. In the remainder of the proof we will show this last property.
	
	Given $M'$ a quasi-\hsk Fuchsian manifold, we denote by $M_k'$ the convex subset of $M'$ bounded by its $k$-\hsk surfaces. The metric retraction on the convex hull of the limit set in the universal cover of $M'$ induces a surjective $1$-\hsk Lipschitz map $R_k$ from the $k$-\hsk surfaces $\partial M_k'$ to the boundary of its convex core. Being $m_k^\pm(M')$ the isotopy classes of the hyperbolic metrics $(- k) \I_k$, with $\I_k$ the first fundamental form of $\partial M_k'$, we deduce that the function
	\[
	R_k \vcentcolon (\partial M_k', m_k^\pm(M')) \longrightarrow (\partial \CC M', m^\pm(M'))
	\]
	is $(-k^{-1})$-\hsk Lipschitz. Similarly, by considering the metric retraction $r_k$ onto the convex subset $M_k'$, we have that the map
	\[
	r_k \vcentcolon (\partial M_{k_0}', m_{k_0}^\pm(M')) \longrightarrow (\partial M_k', m_k^\pm(M'))
	\]
	is $(k k_0^{-1})$-\hsk Lipschitz. Observe that both $R_k$ and $r_k$ respect the markings of the hyperbolic structures $m^\pm(M')$, $m_k^\pm(M')$ and $m_{k_0}^\pm(M')$. This tells us in particular that, if $d_{\textrm{Th}}$ denotes the Thurston's asymmetric distance on Teichm\"uller space, then
	\begin{align}
		d_{\textrm{Th}}(m^\pm(M'), m_k^\pm(M')) & \leq \log (-k^{-1}) , \label{eq:bound1} \\
		d_{\textrm{Th}}(m_k^\pm(M'), m_{k_0}^\pm(M')) & \leq \log (k k_0^{-1}) , \label{eq:bound2}
	\end{align}
	for every quasi-\hsk Fuchsian manifold $M' \in \QF(\Sigma)$. 
	Remember now that the curves $\gamma_k$ were defined so that $\gamma_k(t) = (\beta_k(t), m_k^-(M))$, where $\beta_k$ is the Weil-Petersson geodesic connecting $m_k^+(M)$ to $m_k^-(M)$, for some \emph{fixed} quasi-\hsk Fuchsian manifold $M$. By relation \eqref{eq:bound2} and Theorem \ref{thm:compactness_thurston}, there exists a compact subset $C$ of $\Teich^\hyp(\Sigma)$ containing $\set{m_k^+(M)}_{k \leq k_0} \cup \set{m_k^-(M)}_{k \leq k_0}$. Observe also that relation \eqref{eq:bound1} implies that $\lim_{k \to -1} m_k^\pm(M') = m^\pm(M')$ for every $M'$.
	
	By a result of Wolpert \cite[Corollary~5.6]{wolpert1987geodesic}, the Teichm\"uller space admits an exaustion by compact Weil-\hsk Petersson convex sets. Being the endpoints of the geodesic $\beta_k$ contained inside the compact $C$, we can find a possibly larger compact Weil-\hsk Petersson convex region $C'$ containing the images of the paths $(\beta_k)_{k \leq k_0}$. The curves $(\gamma_k)_{k \leq k_0}$ will then be lying inside $C' \times C \subset \Teich^\hyp(\Sigma) \times \Teich^\hyp(\Sigma)$. In order to conclude, we observe that relation \eqref{eq:bound1} can be restated as
	\[
	d_{\textrm{Th}}(\Psi \circ \Psi_k^{-1} (X), X) \leq \log(-k^{-1})
	\]
	for every $X \in \Teich^\hyp(\Sigma \sqcup \Sigma) \cong \Teich^\hyp(\Sigma) \times \Teich^\hyp(\Sigma)$. Therefore, applying once again Theorem \ref{thm:compactness_thurston} and knowing that the curves $(\gamma_k)_{k \leq k_0}$ are contained in a compact set of $\Teich^\hyp(\Sigma \sqcup \Sigma)$, we deduce the existence of a compact region containing the paths $\Psi \circ \Psi_k^{-1} \circ \gamma_k$ for all $k \leq k_0$, as desired.
\end{proof}

The last step left is to prove Proposition \ref{prop:uniform_convergence_diff}. We will deduce this fact from the so called \emph{dual differential Schl\"afli formula}, stated in Theorem \ref{thm:dual_diff_schlafli}, and from the connection between the first order variation of the volume functions $V_k^* \circ \Psi_k^{-1}$ and the notion of landslides introduced in \cite{bonsante2013a_cyclic}, \cite{bonsante2015a_cyclic}.

\begin{theorem}[{\cite{schlenker_rivin1999schlafli}}]\label{thm:dual_diff_schlafli}
Let $N$ be a compact manifold with boundary, and assume that there exists a smooth $1$-parameter family $(g_t)_t$ of hyperbolic metrics with strictly convex boundary on $N$. Then there exists the derivative of $t \mapsto \Vol^*(N,g_t)$ and it satisfies
\[
\dv{t} \left. \Vol^*(N,g_t) \right|_{t = 0} = \frac{1}{4} \int_{\partial N} \scall{\delta g|_{\partial N}}{H \I - \II} \dd{a}_\I ,
\]
where $\delta g = \dv{t} g_t |_{t = 0}$.
\begin{proof}
This relation is a corollary of \cite[Theorem~8]{schlenker_rivin1999schlafli}. It is enough to apply this result to the definition of dual volume $\Vol^*(N,g_t)$, together with the relation
\[
\delta \left( \int_{\partial N} H \dd{a} \right) = \int_{\partial N} \left( \delta H + \frac{H}{2} \scall{\delta \I}{\I} \right) \dd{a}_\I ,
\]
which follows by differentiating the expression $H \dd{a} = H \sqrt{\det \I} \dd{x} \wedge \dd{y}$ in local coordinates.
\end{proof}
\end{theorem}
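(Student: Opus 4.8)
The plan is to differentiate the defining expression $\Vol^*(N,g_t) = \Vol(N,g_t) - \tfrac{1}{2}\int_{\partial N} H\,\dd{a}$ term by term and to show that the two contributions carrying the variation $\delta H$ of the mean curvature cancel exactly, leaving a boundary integral paired only against $\delta g|_{\partial N} = \delta\I$. This cancellation is the whole point of the dual volume: the ordinary Schläfli formula expresses $\delta\Vol$ through the bending data (the variation of the extrinsic curvature), and subtracting $\tfrac12\int H\,\dd{a}$ produces a quantity whose first variation sees only the intrinsic boundary metric. Existence of the derivative is immediate from the strict convexity hypothesis, which guarantees that $\I$, $\II$ and $H$ depend smoothly on $t$.

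First I would dispose of the elementary piece, $\delta\!\left(\int_{\partial N} H\,\dd{a}\right)$. Writing the integrand in local coordinates as $H\,\dd{a} = H\sqrt{\det\I}\,\dd{x}\wedge\dd{y}$ and combining the product rule with Jacobi's formula $\delta\sqrt{\det\I} = \tfrac12\sqrt{\det\I}\,\trace_\I(\delta\I) = \tfrac12\sqrt{\det\I}\,\scall{\delta\I}{\I}$ gives
\[
\delta\!\left(\int_{\partial N} H\,\dd{a}\right) = \int_{\partial N}\!\left(\delta H + \frac{H}{2}\scall{\delta\I}{\I}\right)\dd{a}_\I ,
\]
which is precisely the relation recorded in the statement; here $\scall{\delta\I}{\I} = \trace_\I(\delta\I)$, since contracting $\delta\I$ against $\I$ in the $\I$-inner product is exactly the $\I$-trace.

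Next I would invoke the smooth Schläfli formula of Rivin and Schlenker, \cite[Theorem~8]{schlenker_rivin1999schlafli}, which for a family of constant-curvature-$(-1)$ metrics expresses $\delta\Vol$ as a boundary integral whose integrand is a combination of $\delta H$, $\scall{\delta\I}{\I}$ and $\scall{\delta\I}{\II}$. Concretely, in the normalization one needs it should read $\delta\Vol = \tfrac12\int_{\partial N}\!\left(\delta H + H\scall{\delta\I}{\I} - \tfrac12\scall{\delta\I}{\II}\right)\dd{a}_\I$. Substituting this together with the formula above into $\delta\Vol^* = \delta\Vol - \tfrac12\,\delta\!\left(\int_{\partial N}H\,\dd{a}\right)$, the $\delta H$ contribution from Schläfli is matched and annihilated by the $-\tfrac12\int\delta H\,\dd{a}$ coming from the mean-curvature term, and collecting the survivors yields
\[
\delta\Vol^* = \frac{1}{4}\int_{\partial N}\!\left(H\scall{\delta\I}{\I} - \scall{\delta\I}{\II}\right)\dd{a}_\I = \frac{1}{4}\int_{\partial N}\scall{\delta\I}{H\I - \II}\,\dd{a}_\I ,
\]
which is the claim once $\delta g|_{\partial N}$ is identified with $\delta\I$.

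The main obstacle will be purely one of bookkeeping: pinning down the precise normalization of the Rivin--Schlenker integrand so that the $\delta H$ terms cancel identically and the surviving coefficients are exactly $\tfrac14 H$ and $-\tfrac14$. As a consistency check I would verify the outcome against the discrete model, where the mechanism is transparent: for a hyperbolic polyhedron the classical formula $\delta\Vol = -\tfrac12\sum_e \ell_e\,\delta\theta_e$ combined with $\int H\,\dd{a} = \sum_e \ell_e(\pi-\theta_e)$ produces $\delta\Vol^* = -\tfrac12\sum_e(\pi-\theta_e)\,\delta\ell_e$, in which only the edge-length variations $\delta\ell_e$ survive. This is exactly the discrete shadow of the cancellation above, with the exterior-curvature datum $(\pi-\theta_e)$ playing the role of the tensor $H\I - \II$ and $\delta\ell_e$ the role of $\delta\I$; I would use this correspondence (up to the global sign and factor conventions fixed by the choice of inward normal) to double-check the constants in the smooth computation.
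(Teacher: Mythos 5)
Your route is exactly the paper's: the paper's proof consists precisely of (a) the identity $\delta\bigl(\int_{\partial N} H \dd{a}\bigr) = \int_{\partial N}\bigl(\delta H + \frac{H}{2}\scall{\delta\I}{\I}\bigr)\dd{a}_\I$, obtained by differentiating $H\sqrt{\det\I}\,\dd{x}\wedge\dd{y}$ just as you do via Jacobi's formula, and (b) substituting the Rivin--Schlenker formula into $\delta\Vol^* = \delta\Vol - \frac12\,\delta\bigl(\int H\dd{a}\bigr)$ so that the $\delta H$ terms cancel. Your polyhedral cross-check is a sensible addition the paper does not include.

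However, the normalization you postulate for \cite[Theorem~8]{schlenker_rivin1999schlafli} is not that theorem, and it is false as written. Test it on the family of hyperbolic balls of radius $r$, where $\delta V = \Area$, $\delta\I = 2\coth r\,\I$, $\II = \coth r\,\I$, $H = 2\coth r$, $\delta H = -2/\sinh^2 r$: your formula returns $\delta V = (3 + 2/\sinh^2 r)\,\Area$. The actual Rivin--Schlenker formula contains \emph{no} $H\scall{\delta\I}{\I}$ term; in the convention where $H>0$ on convex boundaries (the one forced by the requirement $-\frac12\int H\dd{a} \to -\frac12 L_\mu$ at the convex core) it reads
\[
\delta V = \frac12\int_{\partial N}\left(\delta H + \frac12\scall{\delta\I}{\II}\right)\dd{a}_\I ,
\]
and the trace term you inserted into $\delta V$ is exactly the term that instead arises from the variation of the area form in part (a) --- it belongs to the other half of the computation. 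With the correct input, the same cancellation mechanism you describe goes through and yields
\[
\delta\Vol^* = \frac14\int_{\partial N}\scall{\delta\I}{\II - H\I}\dd{a}_\I ,
\]
which on the ball gives $-4\pi\cosh^2 r$, matching $\dv{r}\bigl(-\pi(\sinh 2r + 2r)\bigr)$ exactly, and whose sign also agrees with your own discrete computation $\delta\Vol^* = -\frac12\sum_e(\pi-\theta_e)\,\delta\ell_e$, where the surviving coefficient is negative for a convex boundary. Note this is the \emph{opposite} sign to the display in the statement; the paper is loose on this point (compare the sign of the dual Bonahon--Schl\"afli formula in its introduction with the one in its Theorem \ref{thm:dual_bonahon_schlafli}), and by reverse-engineering the normalization to force the stated sign you produced an incorrect Schl\"afli formula. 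Had you actually carried out the consistency check you proposed, it would have flagged both issues; as submitted, the key cited input must be corrected before the bookkeeping closes.
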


\subsection{Earthquakes and landslides}

We briefly recall the definition of landslide flow, introduced in \citet{bonsante2013a_cyclic}, and the properties that we will need for the proof of Proposition \ref{prop:uniform_convergence_diff}.

\vspace{0.5cm}

Landslides are described by a map
\[
\begin{matrix}
\altmathcal{L} \vcentcolon & S^1 \times \Teich^\hyp(\Sigma) \times \Teich^\hyp(\Sigma) & \longrightarrow & \Teich^\hyp(\Sigma) \times \Teich^\hyp(\Sigma)  \\
& (e^{i \theta}, m, m') & \longmapsto & \altmathcal{L}_{e^{i \theta}}(m,m') .
\end{matrix}
\]
The first component of $\altmathcal{L}_{e^{i \theta}}(m,m')$, which we will denote by $\altmathcal{L}^1_{e^{i \theta}}(m,m')$, is called the \emph{landslide of $m$ with respect to $m'$ with parameter $e^{i \theta}$}. The map $\altmathcal{L}$ is defined via the following result:
\begin{theorem}[{\cite{labourie1992surfaces},\cite{shoen1993the_role}}] \label{thm:minimal_lagrangian}
	Let $m, m' \in \Teich^\hyp(\Sigma)$. Then, for any representative $h \in m$, there exists a unique hyperbolic metric $h' \in m'$ and a unique endomorphism $\mappa{b}{T \Sigma}{T \Sigma}$ such that:
	\begin{itemize}
		\item $h'(\cdot,\cdot) = h(b \cdot, b \cdot)$;
		\item $b$ is $h$-\hsk self-\hsk adjoint and positive definite;
		\item $b$ has determinant $1$;
		\item $b$ is Codazzi with respect to the Levi-\hsk Civita connection $\nabla$ of $h$, i. e. $(\nabla_X b)Y = (\nabla_Y b)X$ for all $X$, $Y$.
	\end{itemize}
\end{theorem}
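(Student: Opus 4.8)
The plan is to reformulate the three conditions on $b$ as the statement that the identity is a \emph{minimal Lagrangian} map, and then to invoke the harmonic-map machinery that produces and characterizes such maps. First I would set $h' \defin h(b\cdot,b\cdot)$. Since $b$ is $h$-self-adjoint this tensor is symmetric, and it is a positive-definite Riemannian metric once one requires $b$ to be positive definite, which is the relevant normalization compatible with $\det b = 1$. A standard computation shows that if $b$ is self-adjoint and Codazzi for $\nabla$, then the Gaussian curvatures satisfy $K_{h'} = K_h/\det b$; with $\det b = 1$ and $K_h \equiv -1$ this forces $K_{h'} \equiv -1$, so that $h'$ is automatically hyperbolic and the only genuine equation is that its isotopy class be the prescribed $m'$. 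Moreover $\det b = 1$ says exactly that $\id\colon(\Sigma,h)\to(\Sigma,h')$ is area-preserving, hence its graph is Lagrangian in $(\Sigma\times\Sigma, h\oplus h')$, while the Codazzi condition on $b$ is equivalent to that graph being a minimal surface. Thus the theorem reduces to the existence and uniqueness, among hyperbolic metrics in the class $m'$, of the one for which $\id$ is minimal Lagrangian, i.e.\ to the classical existence and uniqueness of the minimal Lagrangian diffeomorphism isotopic to the identity between two hyperbolic surfaces.

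For existence I would follow Schoen and realize the minimal Lagrangian map through its graph. Fixing a representative $h'_0\in m'$, consider the minimal surface $\Gamma\subset(\Sigma\times\Sigma, h\oplus h'_0)$ isotopic to the diagonal; endowing $\Gamma$ with its induced conformal structure $X$, the two factor projections $u_1\colon X\to(\Sigma,h)$ and $u_2\colon X\to(\Sigma,h'_0)$ are harmonic, and the Lagrangian condition forces their Hopf differentials to be opposite. Conversely such a configuration reconstructs the map, so existence reduces to producing a conformal structure $X$ on $\Sigma$ and a $q\in Q(X)$ with $\mathrm{Hopf}(X\to(\Sigma,h)) = q = -\,\mathrm{Hopf}(X\to(\Sigma,h'_0))$. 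Here I would invoke the Eells--Sampson existence theorem together with the Schoen--Yau and Sampson result that a harmonic map homotopic to a diffeomorphism between closed hyperbolic surfaces is itself a diffeomorphism, and Wolf's parametrization asserting that, for a fixed domain $X$, the Hopf-differential map $\Teich(\Sigma)\to Q(X)$ is a homeomorphism; matching the two Hopf differentials is then a coupled system solved by a degree or fixed-point argument, or directly by solving the associated sinh-Gordon--type (Bochner) equation for the conformal factor of the energy density.

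Uniqueness I would obtain by a maximum-principle argument. If two admissible pairs $(b_1,h'_1)$ and $(b_2,h'_2)$ solved the problem, then, since $h'_1$ and $h'_2$ lie in the single Teichm\"uller class $m'$, we would have $h'_2 = \psi^* h'_1$ for some $\psi$ isotopic to the identity, yielding two minimal Lagrangian maps $(\Sigma,h)\to(\Sigma,h'_1)$ isotopic to the identity; Schoen's maximum-principle argument shows that the minimal Lagrangian diffeomorphism isotopic to the identity between two fixed hyperbolic surfaces is unique, whence $\psi=\id$, $b_1=b_2$ and $h'_1=h'_2$. I expect the analytic core to be the main obstacle: establishing the geometric equivalence ``$b$ self-adjoint, Codazzi, $\det b = 1$'' $\Longleftrightarrow$ ``$\id$ is minimal Lagrangian'', together with the curvature identity $K_{h'}=K_h/\det b$, and above all the solvability of the matched-harmonic-maps system (equivalently the semilinear elliptic PDE), while the maximum principle disposes of uniqueness once existence is secured.
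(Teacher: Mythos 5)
The paper offers no proof of this statement at all — it is quoted directly from the cited references of Labourie and Schoen — and your sketch is precisely the argument of those references: the three conditions on $b$ characterize $\id$ as a minimal Lagrangian diffeomorphism, existence comes from a minimal graph whose two projections are harmonic maps with opposite Hopf differentials, and uniqueness from Schoen's maximum-principle argument. Your insistence that $b$ be taken positive definite is also the correct reading of the statement: as literally written (self-adjoint, Codazzi, $\det b = 1$) the operator is unique only up to sign, since $-b$ satisfies the same three conditions and induces the same $h'$, so the normalization you add is needed for the claimed uniqueness.
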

\noindent The operator $b$ is also called the \emph{Labourie operator} of the couple $h$, $h'$. In the following, we will identify, with abuse, a pair of isotopy classes $m$, $m' \in \Teich^\hyp(\Sigma)$ with a pair of hyperbolic metrics $h$, $h'$ satisfying the conclusions of the Theorem above. Given $\theta \in \R / 2\pi \Z$ and two hyperbolic metrics $h$, $h'$ with Labourie operator $b$, we denote by $b^\theta$ the endomorphism $\cos(\theta/2) \1 + \sin(\theta/2) J b$, where $J$ is the almost complex structure of $h$, and we set $h^\theta \defin h(b^\theta \cdot,b^\theta \cdot)$. Then the function $\altmathcal{L}$ is defined as:
\[
\altmathcal{L}_{e^{i \theta}}(h,h') \defin (h^\theta, h^{\pi + \theta}) .
\]
It turns out that, for any $\theta$, the metric $h^\theta$ is hyperbolic, and $\altmathcal{L}$ actually defines a flow, in the sense that it satisfies $\altmathcal{L}_{e^{i \theta}} \circ \altmathcal{L}_{e^{i \theta'}} = \altmathcal{L}_{e^{i (\theta + \theta')}}$ for all $\theta, \theta'$.

As earthquakes extend to \emph{complex earthquakes} (see \cite{mcmullen1998complex}), similarly happens for landslides. Fixed $h, h' \in \Teich^\hyp(\Sigma)$, the map $\altmathcal{L}^1_\bullet(h,h')$ extends to a holomorphic function $C_\bullet(h,h')$ defined on a open neighborhood of the closure of the unit disc $\Delta$ in $\C$. If $\zeta = \exp(s + i \theta) \in \overline{\Delta}$, then $C_\zeta$ can be written as
\[
C_\zeta(h,h') = \sgr_s \circ \altmathcal{L}_{e^{i \theta}} (h,h') ,
\]
where $\mappa{\sgr_s}{\Teich^\hyp(\Sigma) \times \Teich^\hyp(\Sigma)}{\Teich^\hyp(\Sigma)}$ is called the \emph{smooth grafting map}. If $s = 0$, then $\sgr_0 \circ \altmathcal{L}_{e^{i \theta}} = \altmathcal{L}^1_{e^{i \theta}}$.

Constant Gaussian curvature surfaces are a natural example in which pairs of metrics as in Theorem \ref{thm:minimal_lagrangian} arise. Let $\Sigma_k$ be a $k$-\hsk surface in a hyperbolic $3$-\hsk manifold, with first fundamental form $\I_k$ and shape operator $B_k$. The \emph{third fundamental form} of $\Sigma_k$ is defined as $\III_k \defin \I_k(B_k \cdot, B_k \cdot)$. Either by direct computation or by using the duality correspondence between hypersurfaces of $\Hyp^3$ and $\dS^3$ (see \cite{rivin1986phd}, \cite{schlenker2002hypersurfaces}), we see that the third fundamental form is a constant Gaussian curvature metric too, with curvature $\frac{k}{k + 1}$. Moreover, if we set
\begin{equation} \label{eq:hyp_metrics_k_surfaces}
h_k \defin - k \ \I_k, \qquad h_k' \defin - \frac{k}{k + 1} \ \III_k, \qquad b_k \defin \frac{1}{\sqrt{k + 1}} B_k ,
\end{equation}
then $h_k$ and $h_k' = h_k(b_k \cdot, b_k \cdot)$ are hyperbolic metrics satisfying the properties of Theorem \ref{thm:minimal_lagrangian}. We refer to \cite{bonsante2013a_cyclic} and \cite{bonsante2015a_cyclic} for a more detailed exposition about landslides, and to \citet{labourie1992surfaces} for what concerns $k$-\hsk surfaces.

Fixed $h'$, we set $l^1(h,h')$ to be the infinitesimal generator of the landslide flow with respect to the hyperbolic metric $h'$ at the point $h \in \Teich^\hyp(\Sigma)$. In other words,
\[
l^1(h,h') \defin \dv{\theta} \left. \altmathcal{L}^1_{e^{i \theta}}(h,h') \right|_{\theta = 0} \in T_h \Teich^\hyp(\Sigma) .
\]
Landslides extend the notion of earthquake in the sense explained by the following Theorem:

\begin{theorem}[{\cite[Proposition~6.9]{bonsante2013a_cyclic}}] \label{thm:limit_landslides}
	Let $(h_n)_n$ and $(h_n')_n$ be two sequences of hyperbolic metrics on $\Sigma$ such that $(h_n)_n$ converges to $h \in \Teich^\hyp(\Sigma)$, and $(h_n')_n$ converges to a projective class of measured lamination $[\mu]$ in the Thurston boundary of Teichm\"uller space. 
	If $(\theta_n)_n$ is a sequence of positive numbers such that the length spectra $\theta_n \ \length_{h_n'}$ converge to transverse measure $\mu$, then $\altmathcal{L}^1_{e^{i \theta_n}}(h_n,h_n')$ converges to the left earthquake $\altmathcal{E}_{\mu/2}(h)$, and ${\theta_n \ l^1(h_n,h_n')|}_{h_n}$ converges to the infinitesimal earthquake $\frac{1}{2} {e_{\mu}|}_h = \dv{t} \altmathcal{E}_{t \mu/2}(h) |_{t = 0}$.
\end{theorem}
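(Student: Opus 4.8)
The plan is to reduce both assertions to a single statement about the degeneration of the Labourie operator $b_n$ of the pair $(h_n,h_n')$, and then to control that degeneration through the minimal Lagrangian maps realizing these pairs. To set up the reduction, decompose the Labourie operator as $b = \tfrac{1}{2}\mathrm{tr}(b)\,\1 + b_0$, where $b_0$ is its $h$-traceless $h$-self-adjoint part. Differentiating $h^\theta = h(b^\theta\cdot,b^\theta\cdot)$ in $\theta$ at $\theta=0$ and using that the $\1$-part of $Jb$ contributes an antisymmetric form while $Jb_0$ is $h$-self-adjoint and trace-free, one finds that $l^1(h,h')$ is represented by the transverse traceless tensor $h(Jb_0\cdot,\cdot)\in S_{tt}(\Sigma,h)$. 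Hence the second assertion is equivalent to the convergence of tangent vectors $\theta_n\, h_n(Jb_{0,n}\cdot,\cdot)\to \tfrac{1}{2}\,{e_\mu|}_h$ at $h$, where $b_{0,n}$ is the traceless part of the Labourie operator of $(h_n,h_n')$.

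Next I would analyze the degeneration of $b_{0,n}$. Since $h_n'\to[\mu]$ in the Thurston boundary, the $h_n'$-lengths of curves blow up; this forces $\theta_n\to 0$ and, through the minimal Lagrangian map $\mappa{f_n}{(\Sigma,h_n)}{(\Sigma,h_n')}$, forces the eigenvalues of $b_n$ to tend to $+\infty$ transversally to $\mu$ and to $0$ along $\mu$. Invoking the harmonic/minimal Lagrangian degeneration theory (Wolf's description of the Thurston boundary by Hopf differentials, in its minimal Lagrangian incarnation), the associated holomorphic quadratic differentials, suitably rescaled, converge to one whose vertical foliation is $\mu$; equivalently, the tensors $\theta_n\, h_n(Jb_{0,n}\cdot,\cdot)$ concentrate along the geodesic realization of $\mu$, with total transverse mass pinned down to that of $\mu$ by the hypothesis $\theta_n\length_{h_n'}\to\iota(\mu,\cdot)$. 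Pairing the limit against the differentials $\dd(L_\gamma)$ of length functions of simple closed curves $\gamma$ and comparing with the cosine formula for the derivative of length along an earthquake (Kerckhoff, Wolpert) then identifies the limit with $\tfrac{1}{2}e_\mu$, proving the infinitesimal statement.

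For the finite statement I would exponentiate the same degeneration by analyzing $b_n^{\theta_n}=\cos(\theta_n/2)\,\1+\sin(\theta_n/2)\,Jb_n$ directly, rather than a fixed vector field (the pair $(h_n,h_n')$ itself degenerates, so the flow cannot be used verbatim). On the complement of $\mu$ the eigenvalues of $b_n$ stay comparable while $\theta_n\to 0$, so $b_n^{\theta_n}\to\1$ and $h_n^{\theta_n}\to h$ there, matching the fact that $\mathcal{E}_{\mu/2}$ is an isometry on the complementary regions. Across the leaves of $\mu$ the term $\sin(\theta_n/2)\,Jb_n$ accumulates a finite shear of total transverse size $\tfrac{1}{2}\iota(\mu,\cdot)$, by the length hypothesis, so $h_n^{\theta_n}$ converges to the metric obtained from $h$ by the left earthquake $\mathcal{E}_{\mu/2}$; continuity of $\mathcal{E}_{\bullet}$ in the measured lamination and of $\mathcal{L}^1_{\bullet}$ in its arguments then gives $\mathcal{L}^1_{e^{i\theta_n}}(h_n,h_n')\to\mathcal{E}_{\mu/2}(h)$.

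The main obstacle is the degeneration analysis of the second paragraph: one must show that the rescaled traceless part of the Labourie operator genuinely concentrates along $\mu$ with the prescribed transverse measure, and that passing to the limit commutes with the non-uniform exponentiation producing the finite landslide. This requires quantitative estimates on minimal Lagrangian maps into hyperbolic surfaces degenerating in the Thurston boundary — controlling both the eigenvalues of $b_n$ and the alignment of its eigendirections with the leaves of $\mu$ — together with a careful interchange of the $n\to\infty$ limit with the flow, which is precisely the point where the discontinuous earthquake shear emerges from the smooth landslide data.
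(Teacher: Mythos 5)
Your opening reduction is correct: differentiating $h^\theta = h(b^\theta\cdot,b^\theta\cdot)$ at $\theta=0$, the $\frac{1}{2}\trv(b)\,J$ part drops out by antisymmetry of $h(J\cdot,\cdot)$ and one is left with the symmetric traceless tensor $h(Jb_0\cdot,\cdot)$, so the infinitesimal assertion is indeed equivalent to $\theta_n\, h_n(Jb_{0,n}\cdot,\cdot)\to\frac{1}{2}\,{e_\mu|}_h$. But from that point on the proposal has a genuine gap, which you in fact name yourself in the final paragraph: the concentration of the rescaled tensors along $\mu$ with total transverse mass $\iota(\mu,\cdot)$, the alignment of the eigendirections of $b_n$ with the leaves of $\mu$, and the interchange of the $n\to\infty$ limit with the non-uniform exponentiation $b_n\mapsto b_n^{\theta_n}$ are all asserted rather than proved. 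These are not routine verifications; they constitute essentially the entire content of the result being proved, so the argument as written reduces the theorem to an unproven claim of comparable depth. Two concrete failure points: first, pairing a putative limit against the differentials $\dd(L_\gamma)$ and invoking the Kerckhoff--Wolpert derivative formula identifies the limit only \emph{after} one knows the family $\theta_n\, h_n(Jb_{0,n}\cdot,\cdot)$ is precompact in $T\Teich$, and no uniform bound is supplied; second, the claim that on the complement of $\mu$ the eigenvalues of $b_n$ stay comparable needs proof --- for a general measured lamination the complementary plaques are approached arbitrarily closely by leaves carrying measure, and locally uniform control of $b_n$ off the support of $\mu$ is precisely the hard degeneration estimate.

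It is worth comparing with what the paper actually does, because it sidesteps your obstacle entirely. The paper does not reprove the convergence of landslides to earthquakes: it quotes it from Bonsante--Mondello--Schlenker (their Proposition~6.8), and its only original content here is the remark that the derivative statement then comes for free from holomorphicity. For fixed $(h_n,h_n')$ the complex landslide $\zeta\mapsto C_\zeta(h_n,h_n')$ is holomorphic on a fixed neighborhood of $\overline{\Delta}$ and restricts to $\mathcal{L}^1_{e^{i\theta}}(h_n,h_n')$ on the circle; uniform convergence of these holomorphic maps to McMullen's complex earthquake upgrades, by Cauchy estimates, to convergence in the $\mathscr{C}^\infty$-topology in the parameter $\zeta$, and differentiating along the reparametrized real direction $t\mapsto e^{i\theta_n t}$ at $t=0$ yields exactly $\theta_n\, l^1(h_n,h_n')\to \frac{1}{2}\,{e_\mu|}_h$. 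In other words, once the finite (complex) convergence is available, no degeneration analysis of minimal Lagrangian maps is needed at all; your plan, by contrast, would amount to re-proving the cited proposition from scratch, and the quantitative estimates you flag as the main obstacle would have to be carried out in full for the proposal to become a proof.
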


\begin{remark}
The last part of the assertion follows from the fact that the functions $e^{i \theta} \mapsto \altmathcal{L}^1_{e^{i \theta}}(h,h')$ extend to holomorphic functions $\zeta \mapsto C_\zeta(h,h')$, where $\zeta$ varies in a neighborhood of $\overline{\Delta}$. In particular, the uniform convergence of the complex landslides $C_\bullet(h_n,h_n')$ to the complex earthquake map implies uniform convergence in the $\mathscr{C}^\infty$-\hsk topology with respect to the complex parameter $\zeta$.
\end{remark}

In order to prove the relation between the differential of the functions $V_k^* \circ \Psi_k^{-1}$ (and therefore the maps $\mappa{\sigma_k}{\QF(\Sigma)}{T^* \Teich^\hyp(\Sigma) \times \Teich^\hyp(\Sigma)}$ defined in \eqref{eq:def_sigmak}) and the landslide flow, it will be useful to have an explicit expression to compute the variation of the hyperbolic length of a simple closed curve $\alpha$ of $\Sigma$ along the infinitesimal landslide $l^1(h,h')$.

\begin{lemma} \label{lem:length_infinitesimal_landslide}
Let $\alpha$ be a simple closed curve in $\Sigma$. Then we have
\[
\dv{\theta} \left. L_\alpha(\altmathcal{L}^1_{e^{i \theta}}(h,h')) \right|_{\theta = 0} = - \int_\alpha \frac{h(b \alpha',J \alpha')}{2 \norm{\alpha'}^2_h} \dd{\ell_h} ,
\]
where $J$ is the complex structure of $h$ and $b$ is the Labourie operator of the couple $h$, $h'$.
\begin{proof}
With abuse, we denote the $h$-\hsk geodesic realization of $\alpha$ by $\alpha$ itself. By definition of landslide we have 
\[
\dv{\theta} \left. \altmathcal{L}^1_{e^{i \theta}}(h,h')(\alpha',\alpha') \right|_{t = 0} = \dot{h}(\alpha',\alpha') = h(\alpha',J b \alpha') .
\]
Since $J$ is $h$-\hsk orthogonal and $J^2 = - \id$, we deduce that $\dot{h}(\alpha',\alpha') = - h(b \alpha',J \alpha')$. Combining this relation with Lemma \ref{lem:derivative_length_integral} we obtain the statement.
\end{proof}
\end{lemma}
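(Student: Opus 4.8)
The plan is to reduce the statement to the general first-order length variation formula of Lemma \ref{lem:derivative_length_integral}, applied to the particular smooth path of hyperbolic metrics $\theta \mapsto \mathcal{L}^1_{e^{i\theta}}(h,h')$ produced by the landslide flow (smoothness in $\theta$ follows from the holomorphic extension to complex landslides recalled above). For a fixed simple closed curve $\alpha$, that formula expresses $\dv{\theta} L_\alpha(\cdot)$ purely in terms of the first-order variation of the metric evaluated on the velocity of the geodesic realization of $\alpha$. Hence the whole computation reduces to identifying $\dot h \defin \left.\dv{\theta}\mathcal{L}^1_{e^{i\theta}}(h,h')\right|_{\theta=0}$ and evaluating it on $\alpha'$.

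First I would unwind the definition of the landslide: $\mathcal{L}^1_{e^{i\theta}}(h,h') = h^\theta = h(b^\theta\cdot, b^\theta\cdot)$ with $b^\theta = \cos(\theta/2)\1 + \sin(\theta/2)Jb$. Since $b^0 = \1$, we have $h^0 = h$, so the geodesic representative of $\alpha$ is taken with respect to $h$ itself, as required by Lemma \ref{lem:derivative_length_integral}. Differentiating gives $\left.\dv{\theta}b^\theta\right|_{\theta=0} = \frac{1}{2}Jb$, whence, using the symmetry of $h$,
\[
\dot h(\alpha',\alpha') = h\!\left(\frac{1}{2}Jb\alpha', \alpha'\right) + h\!\left(\alpha', \frac{1}{2}Jb\alpha'\right) = h(\alpha', Jb\alpha') .
\]
To recast this in the form appearing in the statement I would then use that $J$ is compatible with $h$, i.e. $h(JX,JY) = h(X,Y)$, which together with $J^2 = -\id$ yields $h(X,JY) = -h(JX,Y)$; applying this gives $\dot h(\alpha',\alpha') = h(\alpha', Jb\alpha') = -h(b\alpha', J\alpha')$.

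Finally I would substitute into the length variation formula for the single curve $\alpha$ and pass to arc length via $\dd{\length_h} = \norm{\alpha'}_h \dd{s}$, obtaining
\[
\left.\dv{\theta} L_\alpha(\mathcal{L}^1_{e^{i\theta}}(h,h'))\right|_{\theta=0} = \frac{1}{2}\int_\alpha \frac{\dot h(\alpha',\alpha')}{\norm{\alpha'}^2_h}\dd{\length_h} = -\int_\alpha \frac{h(b\alpha', J\alpha')}{2\norm{\alpha'}^2_h}\dd{\length_h} ,
\]
which is the desired identity. The single point requiring care is the sign in the middle step: the correct factor $-1$ in the final integrand hinges on $J$ being $h$-\emph{skew}-adjoint (the consequence of compatibility together with $J^2 = -\id$), as well as on the chosen orientation of $J$ and the normalization of $b^\theta$; these conventions must be fixed consistently, after which the computation above is entirely routine.
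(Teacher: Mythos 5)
Your proposal is correct and follows essentially the same route as the paper: differentiate the landslide metric $h^\theta = h(b^\theta\cdot,b^\theta\cdot)$ at $\theta = 0$ to get $\dot h(\alpha',\alpha') = h(\alpha',Jb\alpha') = -h(b\alpha',J\alpha')$, then feed this into the variation formula of Lemma \ref{lem:derivative_length_integral} for the weight-one curve $\alpha$. Your explicit computation of $\left.\dv{\theta}b^\theta\right|_{\theta=0} = \tfrac{1}{2}Jb$ makes precise what the paper compresses into ``by definition of landslide,'' and your remark that the sign rests on $J$ being $h$-\emph{skew}-adjoint (a consequence of compatibility and $J^2=-\id$) is in fact more accurate than the paper's wording, which calls $J$ self-adjoint.
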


\noindent In order to simplify the notation, we will write $\I_k$, $\II_k$ and $\III_k$ to denote the fundamental forms of the $k$-\hsk surface $\partial M_k = \Sigma_k^+ \sqcup \Sigma_k^-$ (these are tensors on the \emph{union} of the upper and lower $k$-\hsk surfaces). In particular, $h_k$ and $h_k'$ will represent the hyperbolic metrics on $\Sigma^+ \sqcup \Sigma^-$ defined as in \eqref{eq:hyp_metrics_k_surfaces}. The relation between landslides and the dual volume of the region enclosed by the two $k$-\hsk surfaces is described by the following fact:

\begin{proposition} \label{prop:gradient_dual_volume_k}
Let $M$ be a quasi-Fuchsian manifold and let $h_k$, $h_k' \in \Teich^\hyp(\Sigma^+ \sqcup \Sigma^-)$ denote the hyperbolic metrics $- k \ \I_k$ and $- k (k + 1)^{-1} \ \III_k$. Then we have
\[
\sigma_k(M) = \sqrt{- \frac{k + 1}{k}} \ \omega_\WP(l^1(h_k,h_k'),\cdot) \in T^*_{\Psi_k(M)} \Teich^\hyp(\Sigma^+ \sqcup \Sigma^-) ,
\]
where $\omega_\WP$ is the Weil-\hsk Petersson symplectic form on $\Teich^\hyp(\Sigma^+ \sqcup \Sigma^-) \cong \Teich^\hyp(\Sigma) \times \Teich^\hyp(\Sigma)$ and $\sigma_k$ is the map defined in relation \eqref{eq:def_sigmak}.
\begin{proof}
Given a simple closed curve $\alpha$ in $\Sigma^+ \sqcup \Sigma^-$, we denote by $e_\alpha$ the infinitesimal generator of the left earthquake flow along $\alpha$ on $\Teich^\hyp(\Sigma^+ \sqcup \Sigma^-)$. We will prove the statement by showing that, for every simple closed curve $\alpha$, we have
\begin{equation} \label{eq:diff_dual_k_volume_1}
\sigma_k(M) (e_\alpha) = \dd{(V_k^* \circ \Psi_k^{-1})}_{\Psi_k(M)}(e_\alpha) = \sqrt{- \frac{k + 1}{k}} \ \omega_\WP(l^1(h_k,h_k'),e_\alpha) ,
\end{equation}
where $\sigma_k(M)(e_\alpha)$ stands for evaluation of the cotangent vector $\sigma_k(M) \in T^*_{\Psi_k(M)} \Teich^\hyp(\Sigma^+ \sqcup \Sigma^-)$ at $e_\alpha \in T_{\Psi_k(M)} \Teich^\hyp(\Sigma^+ \sqcup \Sigma^-)$.
Since the constant $k$ will be fixed, we will not write the dependence on $k$ in the objects involved in the argument, in order to simplify the notation. By Theorem \ref{thm:ksurfaces_homeo}, given any first order variation of metrics $\delta \I = \delta \I_k$ on the $k$-\hsk surface $\Sigma^+ \sqcup \Sigma^- = \Sigma^+_k \sqcup \Sigma^-_k$, we can find a variation $\delta g$ of hyperbolic metrics on $M$ satisfying $\dd{\Psi_k}(\delta g) = \delta g |_{\Sigma^+ \sqcup \Sigma^-} = \delta \I$. Our first step will be to construct an explicit variation $\delta \I$ corresponding to the vector field $e_\alpha$, and then to apply Proposition \ref{thm:dual_diff_schlafli} to compute $\dd{(V^*_k \circ \Psi_k^{-1})}(e_\alpha)$.
 
We will identify the curve $\alpha$ with its $\I$-\hsk geodesic parametrization of length $L_\alpha$ and at speed $1$. Let $J$ denote the almost complex structure of $\I$, and set $V$ to be the vector field along $\alpha$ given by $- J \alpha'$ ($J \alpha'$ is a vector field along $\alpha$ that is orthogonal to $\alpha'$). We can find a $\varepsilon > 0$ so that the map
\[
\begin{matrix}
\xi \vcentcolon & \R / L_\alpha \Z \times [0,\varepsilon] & \longrightarrow & \Sigma \\
& (s,r) & \longmapsto & \exp_{\alpha(s)}(r V(s))
\end{matrix}
\]
is a diffeomorphism onto its image (here $\exp$ is the exponential map with respect to $\I$). The image of $\xi$ is a closed cylinder in $\Sigma$ having $\alpha$ as left boundary component. Observe that the metric $\I$ equals $\dd{r}^2 + \cosh^2 r \dd{s}^2$ in the coordinates defined by $\xi^{-1}$. We also choose a smooth function $\mappa{\eta}{[0,\varepsilon]}{[0,1]}$ that coincides with $1$ in a neighborhood of $0$, and with $0$ in a neighborhood of $\varepsilon$. Now define
\[
\begin{matrix}
f_t \vcentcolon & \R/L_\alpha \Z \times [0,\varepsilon] & \longrightarrow & \R/L_\alpha \Z \times [0,\varepsilon] \\
& (s,r) & \longmapsto & (s + t \eta(r),r) .
\end{matrix}
\]
The maps $u_t \defin \xi \circ f_t \circ \xi^{-1}$ give a smooth isotopy of the strip $\Im \xi$ adjacent to $\alpha$, with $u_0 = \id$. Finally we set
\[
\delta \I \defin
\begin{cases}
\left. \dv{t} u_t^* \I \right|_{t = 0} = 2 \eta'(r) \cosh^2 r \ \dd{r} \dd{s} & \text{inside $\Im \xi$}, \\
0 & \text{elsewhere},
\end{cases}
\]
where here $2 \dd{s} \dd{r} =  \dd{s} \otimes \dd{r} + \dd{r} \otimes \dd{s}$. Thanks to our choice of the function $\eta$, $\delta \I$ is a smooth symmetric tensor of $\Sigma^+ \sqcup \Sigma^-$ that represents the first order variation of $\I$ along the infinitesimal left earthquake $e_\alpha$. By Proposition \ref{thm:dual_diff_schlafli}, we have that
\[
\dd{V^*_k}(\delta g) = \frac{1}{4} \int_{\Sigma_k^+ \sqcup \Sigma_k^-} \scall{\delta g|_{\Sigma_k^+ \sqcup \Sigma_k^-}}{H \I - \II} \dd{a} = - \frac{1}{4} \int_0^{L_\alpha} \int_0^\varepsilon \scall{\delta \I}{\II} \cosh r \dd{r} \dd{s} ,
\]
where the last step follows from the fact that $\delta \I$ is $\I$-\hsk traceless. Let $\nabla$ denote the Levi-Civita connection of $\I$. Then the coordinate vector fields of $\xi^{-1}$ satisfy:
\[
\nabla_{\partial_r} \partial_r = 0, \qquad \nabla_{\partial_s} \partial_r = \nabla_{\partial_r} \partial_s = \tanh r \ \partial_s , \qquad \nabla_{\partial_s} \partial_s = - \sinh r \cosh r \ \partial_r .
\]
By definition, $\scall{\delta \I}{\II} = 2 \ \I^{rr} \, \I^{ss} \, \delta \I_{rs} \, \II_{r s} = 2 \eta' \ \II_{rs}$. If we set $f(r) \defin \int^{L_\alpha}_0 \II_{r s} \dd{s}$, then, integrating by parts and recalling that $\eta(\varepsilon) = 0$, we get
\begin{align*}
\dd{V^*_k}(\delta g) & = - \frac{1}{2} \int_0^\varepsilon \eta'(r) f(r) \cosh r \dd{r} \\
& = \frac{1}{2} f(0) + \frac{1}{2} \int_0^\varepsilon \eta(r) (f'(r) \cosh r + f(r) \sinh r) \dd{r} \tag{$\star$}
\end{align*}
Being the second fundamental form a Codazzi tensor, we have $(\nabla_{\partial_r} \II)_{r s} = (\nabla_{\partial_s} \II)_{r r}$. Using the expressions of the connection given above, this relation can be rephrased as $\partial_r \II_{r s} = \partial_s \II_{r r} - \tanh r \ \II_{s r}$. Hence we deduce
\[
f'(r) = \int^{L_\alpha}_0 ( \partial_s \II_{r r} - \tanh r \ \II_{s r} ) \dd{s} = - \tanh r \ f(r) ,
\]
where the first summand vanishes because $\alpha$ is a closed curve. Therefore the integral in the relation ($\star$) equals $0$, and we end up with the equation
\begin{equation} \label{eq:diff_dual_k_volume_2}
\dd{V^*_k}(\delta g) = \frac{1}{2} \int_0^{L_\alpha} \II_{rs} \dd{s} =  -\frac{1}{2} \int_0^{L_\alpha} \I(B \alpha', J \alpha') \dd{s}
\end{equation}
since $\partial_r |_{r = 0} = V = - J \alpha'$ and $\partial_s |_{r = 0} = \alpha'$.

Now we apply Lemma \ref{lem:length_infinitesimal_landslide} to $\alpha$, the hyperbolic metrics $h = - k \ \I$, $h' = - \frac{k}{k + 1} \ \III$ and the operator $b = \frac{1}{\sqrt{k + 1}} B$ (here $B$ is the shape operator of $\Sigma_k^+ \sqcup \Sigma_k^-$), obtaining 
\[
\dd{(L_\alpha)}_h (l^1(h,h')) = - \frac{1}{2} \sqrt{- \frac{k}{k + 1}} \int_0^{L_\alpha} \I(B \alpha', J \alpha') \dd{s} .
\]
This relation, combined with \eqref{eq:diff_dual_k_volume_2}, proves that
\[
\dd{V^*_k}(\delta g) = \sqrt{- \frac{k + 1}{k}} \ \dd{(L_\alpha)}_h (l^1(h,h'))
\]
By the work of \citet{wolpert1983on_the_symplectic}, we have $\dd{L_\alpha} = \omega_\WP (\cdot, e_\alpha)$, which proves relation \eqref{eq:diff_dual_k_volume_1}, and therefore the statement.
\end{proof}
\end{proposition}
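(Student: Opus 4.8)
The plan is to exploit that both sides of the claimed identity are elements of the cotangent space $T^*_{\Psi_k(M)}\Teich^2$, so it suffices to check their equality after pairing with a spanning family of tangent vectors, with $k$ fixed throughout. For this I would use the infinitesimal left earthquake fields $e_\alpha$, ranging over all simple closed curves $\alpha$ on $\Sigma$: by Wolpert's identity $\dd{L_\alpha}=\hat{\omega}_\WP(\cdot,e_\alpha)$ and the nondegeneracy of the Weil--Petersson form, the classical fact that the differentials $\dd{L_\alpha}$ span $T^*\Teich$ forces the fields $e_\alpha$ to span $T\Teich$. Hence the proposition reduces to the scalar identity
\[
\dd{(V_k^*)}_{\Psi_k(M)}(e_\alpha)=\sqrt{-\tfrac{k+1}{k}}\ \hat{\omega}_\WP(l^1(h_k,h_k'),e_\alpha)
\]
for every simple closed curve $\alpha$.

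Next I would compute the left-hand side by the dual differential Schl\"afli formula. Via the $\mathscr{C}^1$-diffeomorphism $\Psi_k$ (Theorem \ref{thm:ksurfaces_homeo}), the earthquake direction $e_\alpha$ lifts to a first-order deformation of the quasi-Fuchsian structure, hence to a variation $\delta g$ of the hyperbolic metric on $M$ whose restriction to $\Sigma_k$ is a variation $\delta\I$ of the first fundamental form representing $e_\alpha$. I would realize $\delta\I$ explicitly: in Fermi coordinates $(s,r)$ along the $\I$-geodesic $\alpha$, where $\I=\dd{r}^2+\cosh^2 r\,\dd{s}^2$, take the variation induced by a shear isotopy of a collar of $\alpha$ that is a full twist along $\alpha$ and trivial on the outer boundary of the collar, so that $\delta\I=2\eta'(r)\cosh^2 r\,\dd{r}\,\dd{s}$ for a suitable cutoff $\eta$. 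Since $\delta\I$ is $\I$-traceless, the term $H\I$ in Theorem \ref{thm:dual_diff_schlafli} drops out and one is left with $\dd{(V_k^*)}(e_\alpha)=-\tfrac14\int_{\Sigma_k}\scall{\delta\I}{\II}\dd{a}$.

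The heart of the argument is to collapse this collar integral onto the curve $\alpha$. Expanding in coordinates gives $\scall{\delta\I}{\II}=2\eta'\,\II_{rs}$; integrating by parts in $r$ and using that $\II$ is a Codazzi tensor for the Levi--Civita connection of $\I$ (being the second fundamental form of the embedded $k$-surface), the interior contribution cancels and only the boundary term at $r=0$ survives, yielding
\[
\dd{(V_k^*)}_{\Psi_k(M)}(e_\alpha)=-\tfrac12\int_0^{L_\alpha}\I(B\alpha',J\alpha')\dd{s},
\]
where $B$ is the shape operator of $\Sigma_k$ and $J$ the complex structure of $\I$. I expect this localization step---the construction of $\delta\I$ so that it genuinely represents $e_\alpha$, together with the integration by parts driven by the Codazzi equation---to be the main obstacle, since it is where the geometry of the embedded $k$-surface enters and where any sign or normalization slip would propagate.

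Finally I would recognize the right-hand side through the infinitesimal landslide. The relations $h_k=-k\,\I_k$, $h_k'=-\tfrac{k}{k+1}\,\III_k$ and $b_k=(k+1)^{-1/2}B_k$ of \eqref{eq:hyp_metrics_k_surfaces} place us exactly in the setting of Theorem \ref{thm:minimal_lagrangian}, so applying Lemma \ref{lem:length_infinitesimal_landslide} to $\alpha$ with this data gives
\[
\dd{(L_\alpha)}_{h_k}(l^1(h_k,h_k'))=-\tfrac12\sqrt{-\tfrac{k}{k+1}}\int_0^{L_\alpha}\I(B\alpha',J\alpha')\dd{s}.
\]
Comparing with the Schl\"afli computation yields $\dd{(V_k^*)}(e_\alpha)=\sqrt{-\tfrac{k+1}{k}}\ \dd{(L_\alpha)}_{h_k}(l^1(h_k,h_k'))$, and a last application of Wolpert's identity $\dd{L_\alpha}=\hat{\omega}_\WP(\cdot,e_\alpha)$ rewrites the right side as $\sqrt{-\tfrac{k+1}{k}}\,\hat{\omega}_\WP(l^1(h_k,h_k'),e_\alpha)$. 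As $\alpha$ was arbitrary and the fields $e_\alpha$ span, this establishes the proposition.
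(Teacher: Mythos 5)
Your proposal is correct and follows essentially the same route as the paper: reduction to the spanning family of earthquake fields $e_\alpha$, the explicit shear variation $\delta \I = 2\eta'(r)\cosh^2 r \, \dd{r}\dd{s}$ in Fermi coordinates, the dual differential Schl\"afli formula with the $H\I$ term dropping by tracelessness, the Codazzi-driven integration by parts localizing to the boundary term $-\tfrac12\int_0^{L_\alpha}\I(B\alpha',J\alpha')\dd{s}$, and the comparison with Lemma \ref{lem:length_infinitesimal_landslide} via Wolpert's identity. Your added remark that the $e_\alpha$ span the tangent space (via nondegeneracy of $\hat{\omega}_\WP$ and the spanning of the $\dd{L_\alpha}$) makes explicit a point the paper leaves implicit, but the argument is otherwise the same.
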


\noindent Since the complex landslide is holomorphic with respect to the complex structure of $\Teich^\hyp(\Sigma^+ \sqcup \Sigma^-)$, an equivalent way to state Proposition \ref{prop:gradient_dual_volume_k} is the following:
\begin{proposition}
Let $M$ be a quasi-Fuchsian manifold and let $h_k$, $h_k' \in \Teich^\hyp(\Sigma^+ \sqcup \Sigma^-)$ denote the hyperbolic metrics $- k \ \I_k$ and $- k (k + 1)^{-1} \ \III_k$. Then the Weil-\hsk Petersson gradient of $V_k^* \circ \Psi_k^{-1}$ coincides, up to a multiplicative factor, with the infinitesimal grafting with respect to the couple $(h_k, h_k')$. In other words,
\[
\grd_\WP{(V_k^* \circ \Psi_k^{-1})} = \sqrt{ - \frac{k + 1}{k}} \ \dv{s} \left. \sgr_s(h_k,h_k') \right|_{s = 0} .
\]
\end{proposition}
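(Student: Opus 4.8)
The plan is to obtain this statement directly from Proposition \ref{prop:gradient_dual_volume_k} by passing from the differential $\dd{V_k^*}$ to its Weil--Petersson gradient and by interpreting the complex structure of $\Teich^2$ through the holomorphicity of the complex landslide. Proposition \ref{prop:gradient_dual_volume_k} reads $\dd{V_k^*} \circ \Psi_k = \sqrt{-(k+1)/k}\,\hat\omega_\WP(l^1(h_k,h_k'),\cdot)$. Since $(\Teich^2,\hat\omega_\WP,\scal{\cdot}{\cdot}_\WP)$ is K\"ahler, with integrable complex structure $\mathbf{J}$, I would use the identity $\hat\omega_\WP(X,Y)=\scal{X}{\mathbf{J}Y}_\WP$ together with the fact that $\mathbf{J}$ is a $\scal{\cdot}{\cdot}_\WP$-isometry, so that $\scal{l^1}{\mathbf{J}X}_\WP = -\scal{\mathbf{J}l^1}{X}_\WP$. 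By definition of the gradient, $\scal{\grd_\WP V_k^*}{\cdot}_\WP = \dd{V_k^*}(\cdot)$, this already shows $\grd_\WP V_k^* = -\sqrt{-(k+1)/k}\,\mathbf{J}\,l^1(h_k,h_k')$.

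The key step is then to identify $\mathbf{J}\,l^1(h_k,h_k')$ with the infinitesimal smooth grafting. For this I would exploit the complex landslide $\zeta\mapsto C_\zeta(h_k,h_k')$, which is holomorphic on a neighborhood of $\overline{\Delta}$ and satisfies $C_1 = h_k$, $C_{e^{i\theta}} = \mathcal{L}^1_{e^{i\theta}}(h_k,h_k')$ and $C_{e^s} = \sgr_s(h_k,h_k')$. Writing $\zeta = \exp(s + i\theta)$, at $\zeta = 1$ the coordinate vectors satisfy $\partial_\theta = J_\C\,\partial_s$, where $J_\C$ is multiplication by $i$ in the $\zeta$-plane, while the differential of $C$ at $1$ intertwines $J_\C$ and $\mathbf{J}$ by holomorphicity. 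Hence
\[
l^1(h_k,h_k') = \dd{C}(\partial_\theta) = \dd{C}(J_\C\,\partial_s) = \mathbf{J}\,\dd{C}(\partial_s) = \mathbf{J}\,\dv{s}\left.\sgr_s(h_k,h_k')\right|_{s=0},
\]
and applying $\mathbf{J}$ with $\mathbf{J}^2 = -\1$ gives $\mathbf{J}\,l^1(h_k,h_k') = -\,\dv{s}\left.\sgr_s(h_k,h_k')\right|_{s=0}$. Substituting into the expression for the gradient from the first paragraph cancels the two minus signs and yields exactly $\grd_\WP V_k^* = \sqrt{-(k+1)/k}\,\dv{s}\left.\sgr_s(h_k,h_k')\right|_{s=0}$.

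The hard part will be the sign bookkeeping: the final positive coefficient arises only because two independent sign ambiguities cancel, namely the precise form of the K\"ahler identity relating $\hat\omega_\WP$, $\scal{\cdot}{\cdot}_\WP$ and $\mathbf{J}$, and the orientation of $\mathbf{J}$ relative to the parametrization $\zeta = \exp(s + i\theta)$ of the complex landslide. I would therefore fix these two conventions carefully and check their compatibility with the normalization $\dd{L_\alpha} = \hat\omega_\WP(\cdot,e_\alpha)$ already used in the proof of Proposition \ref{prop:gradient_dual_volume_k}. Apart from this, the argument is a purely formal transcription of Proposition \ref{prop:gradient_dual_volume_k} through the K\"ahler correspondence and the holomorphicity of $C_\zeta$ recorded in the Remark following Theorem \ref{thm:limit_landslides}.
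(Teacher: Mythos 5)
Your proposal is correct and is essentially the paper's own reasoning: the paper offers no separate proof of this proposition, presenting it as an immediate reformulation of Proposition \ref{prop:gradient_dual_volume_k} justified by the single remark that the complex landslide is holomorphic with respect to the complex structure of $\Teich^2$, which is exactly the K\"ahler/holomorphicity transcription you spell out. Your explicit bookkeeping (the identity relating $\hat{\omega}_\WP$, $\scal{\cdot}{\cdot}_\WP$ and $\mathbf{J}$, together with $\dd{C} \circ J_{\C} = \mathbf{J} \circ \dd{C}$ at $\zeta = 1$, so that $l^1 = \mathbf{J}\, \dv{s} \sgr_s |_{s=0}$) is more detailed than what the paper records, but it is the same route, and your attention to the two cancelling sign conventions is precisely the point that makes the stated positive coefficient come out.
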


The behavior of the third fundamental forms $\III_k$ of the $k$-\hsk surfaces, as $k$ approaches $-1$, is well understood and described by the following Theorem:

\begin{theorem} \label{thm:convergence_k*_surfaces}
Let $(E_n)_n$ be a sequence of hyperbolic ends converging to an hyperbolic end $E$ homeomorphic to $\Sigma \times \R_{\geq 0}$, and let $(k_n)_n$ be any decreasing sequence of numbers converging to $-1$. Then the length spectra $\length_{\III_n}$ converges to the transverse measure $\mu$, where $\III_n$ denotes the third fundamental form of the $k_n$-\hsk surface of $E_n$, and $\mu$ is the bending measured lamination of the concave boundary of $E$.
\end{theorem}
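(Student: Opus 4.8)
The plan is to prove the convergence as an equality of functionals on $\MesLam(\Sigma)$ in three stages: reduce to simple closed curves, establish the asymptotics for a fixed end, and then pass to the double limit $E_n \to E$, $k_n \to -1$. Since for $k\in(-1,0)$ the principal curvatures of a $k$-surface never vanish, each $\III_n$ is a genuine metric of constant negative curvature $k_n/(k_n+1)$, so $\length_{\III_n}$ is an honest length function and defines a geodesic current. Both $\length_{\III_n}$ and $\iota(\mu,\cdot)$ are continuous and homogeneous on $\MesLam(\Sigma)$, and the embeddedness of the pleated boundaries yields uniform bounds on the bending measures (Theorem \ref{thm:yarmola_bound_measure}), which provide the tightness needed to upgrade pointwise convergence to current convergence. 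Hence it suffices to show $\length_{\III_n}(\gamma) \to \iota(\mu,\gamma)$ for every simple closed curve $\gamma$.

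The geometric heart of the matter is the identity
\[
\length_{\III_k}(\gamma) = \int_\gamma \sqrt{\III_k(\gamma',\gamma')} \dd{s} = \int_\gamma \norm{B_k \gamma'}_{\I_k} \dd{s} = \int_\gamma \norm{\nabla_{\gamma'} N} \dd{s} ,
\]
where $N$ is the unit normal of the $k$-surface $\Sigma_k$ in $\Hyp^3$ and $\nabla$ is the ambient connection (using $\nabla_{\gamma'} N = -B_k \gamma'$ and that this vector is tangent to $\Sigma_k$): the $\III_k$-length of $\gamma$ is exactly the total turning of the surface normal along $\gamma$. As $k \to -1$ the surfaces $\Sigma_k$ converge to the concave pleated boundary $S_\mu$ of $E$ (Theorem \ref{thm:k_surfaces_foliation}), whose normal is locally constant along the flat plaques of $\lambda = \supp \mu$ and rotates by the exterior dihedral angle across the bending locus; thus for the pleated limit the total turning of $N$ along a transversal is precisely the transverse mass $\iota(\mu,\gamma)$, and the whole task is to transfer this to the approximating $k$-surfaces.

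I would obtain the two inequalities separately. For the lower bound I localise inside finitely many disjoint flow boxes transverse to $\lambda$ covering the crossings of $\gamma$: inside each box the net rotation of $N$ converges to the local bending mass, and since net rotation bounds the total variation $\int \norm{\nabla_{\gamma'}N}\dd{s}$ from below, summing over the boxes gives $\liminf_{k\to-1}\length_{\III_k}(\gamma)\ge \iota(\mu,\gamma)$; this localisation is what rescues the argument for a \emph{closed} curve, whose net rotation around the whole loop carries no information. For the upper bound I use that on the plaques the surface becomes totally geodesic, so both principal curvatures are $O(\sqrt{k+1})$ there and the flat contribution to the integral vanishes, while near the bending locus the strict convexity of $\Sigma_k$ (the Gauss map being an orientation-preserving local diffeomorphism) prevents the normal from turning back, so its total variation across each bending region converges to the net rotation, i.e. to the transverse mass. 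Combining the two yields $\length_{\III_k}(\gamma) \to \iota(\mu,\gamma)$ for a fixed end.

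To remove the fixed-end assumption, I note that $E_n \to E$ forces the concave pleated boundaries, hence the bending laminations, to converge, so $\iota(\mu_n,\gamma)\to\iota(\mu,\gamma)$; moreover the $k$-surfaces depend continuously on the pair (end, parameter) by Theorem \ref{thm:ksurfaces_homeo} together with the foliation estimates of \citet{labourie1991probleme}, so the fixed-end asymptotics hold uniformly in $n$, and a diagonal argument gives the claim. The main obstacle is the fixed-end crux, and specifically the upper bound: controlling the total turning of the normal \emph{uniformly} as $k\to-1$ for a lamination whose bending need not be concentrated along finitely many closed geodesics. This is exactly where the strict convexity of the $k$-surfaces and a quantitative form of the convergence $\Sigma_k \to S_\mu$ near $\lambda$ are indispensable; equivalently, one may track the polar dual surfaces $\Sigma_k^\ast \subset \dS^3$, whose induced metrics are the $\III_k$, and their degeneration onto the spacelike geodesic arcs dual to the bending locus, of lengths equal to the bending angles.
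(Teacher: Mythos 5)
Your plan is a from-scratch differential-geometric proof, which is genuinely different from what the paper does: the paper does not argue directly at all, but observes (Remark \ref{rmk:duality_hyp_desitter}) that the statement is a restatement of \cite[Theorem~2.10]{belraouti2017asymptotic}, transported through the Mess duality between hyperbolic ends and MGHC de Sitter spacetimes, under which the dual surfaces $\Sigma_k^* \subset \dS^3$ have induced metric $\III_k$ and collapse onto the initial singularity dual to the bending lamination. Your last sentence gestures at exactly this dual picture, but carrying out that degeneration is the entire content of the cited theorem, not a finishing touch; so the question is whether your direct argument closes the gap on its own. It does not.

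The crux is that $\int_\gamma \norm{\nabla_{\gamma'}N}\dd{s}$ is the length of the dual curve $\gamma^*$ in $\Sigma_k^*$, while your ``net rotation'' is the exterior angle between the support planes at the endpoints, i.e.\ the $\dS^3$-distance between the endpoints of $\gamma^*$. For a pleated surface bent along two or more distinct leaves the dual path is a \emph{broken} spacelike geodesic (consecutive segments lie on the distinct dual geodesics $L_1^*$, $L_2^*$, which meet at an angle at the dual point of the common plaque), so the net rotation is in general strictly smaller than the transverse mass of the arc, even for arbitrarily short transversals. Consequently your lower-bound step ``inside each box the net rotation converges to the local bending mass'' is not automatic: it is an Epstein--Marden-type statement (bending measure as a limit of sums of exterior angles over finer and finer subdivisions), and it forces an interchange of two limits (box size $\to 0$ against $k \to -1$) that you never justify, together with quantitative $C^1$-convergence of the normals of $\Sigma_k$ to the support planes. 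The upper bound is in worse shape: strict convexity (the Gauss map being a local diffeomorphism) does not make the Gauss image of a curve a monotonically traversed geodesic, so it does not yield ``total variation $\to$ net rotation''; and if it did, the previous point shows this would bound $\liminf_k \length_{\III_k}(\gamma)$ by a sum of net rotations that can be strictly below $\iota(\mu,\gamma)$, contradicting your own lower bound --- the two halves of your argument are mutually inconsistent unless the net rotation equals the mass, which is exactly what fails. A correct upper bound has to exhibit a good representative (e.g.\ push the geodesic realization of $\gamma$ on the pleated boundary out to $\Sigma_k$) and control its $\III_k$-length near the bending locus, which again is quantitative information about the collapse of $\Sigma_k^*$ onto the dual tree, i.e.\ Belraouti's theorem. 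Finally, the reduction of the double limit $(E_n,k_n) \to (E,-1)$ to the fixed-end case needs the $k\to-1$ asymptotics to be uniform in $n$; Theorem \ref{thm:ksurfaces_homeo} concerns a single fixed $k$ and provides no such equicontinuity, so the concluding diagonal argument is unsupported.
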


\begin{remark} \label{rmk:duality_hyp_desitter}
Theorem \ref{thm:convergence_k*_surfaces} is in fact a restatement of \cite[Proposition 6.7]{bonsante2013a_cyclic} (see also \cite{belraouti2017asymptotic}). In \cite[Proposition 6.7]{bonsante2013a_cyclic} the authors work with \emph{maximal global hyperbolic spatially compact}  (MGHC) \emph{de Sitter spacetimes}, which relate to the world of hyperbolic ends through the duality between the de Sitter and the hyperbolic space-\hsk forms, as observed by \citet{mess2007lorentz}. In particular, this phenomenon allowed \citet{barbot2011prescribing} to give an alternative proof of the existence of the foliation by $k$-\hsk surfaces.
\end{remark}

We finally have all the elements to prove Proposition \ref{prop:uniform_convergence_diff}.

\begin{proof}[Proof of Proposition \ref{prop:uniform_convergence_diff}]
Let $(M_n)_n$ be a sequence of quasi-Fuchsian manifolds converging to $M$, and let $(k_n)_n$ be a decreasing sequence converging to $-1$. We denote my $h_n$ and $h_n'$ the hyperbolic metrics
\[
h_n \defin - k_n \I_{k_n}, \qquad h_n' \defin - \frac{k_n}{1 + k_n} \III_{k_n},
\]
where $\I_{k_n}$ and $\III_{k_n}$ are the first and third fundamental forms of the $k_n$-\hsk surface $\Sigma^+_{k_n} \sqcup \Sigma^-_{k_n}$ sitting inside $M_n$. By a compactness argument similar to the one described in the proof of Lemma \ref{lem:compact}, the hyperbolic metrics $h_n$ converge to the metric $m$ on the boundary of the convex core of $M$. If we take
\[
\theta_n \defin \sqrt{ - \frac{1 + k_n}{k_n}} ,
\]
then, by Theorem \ref{thm:convergence_k*_surfaces}, the length spectrum of $\theta_n \, \length_{h_n'}$ converges to the bending measure $\mu$ of the boundary of the convex core of $M$. Therefore, applying Theorem \ref{thm:limit_landslides} we obtain that $\theta_n \, l^1(h_n,h_n')|_{h_n}$ converges to $1/2 \ e_\mu |_m$. Combining this with Proposition \ref{prop:gradient_dual_volume_k}, we prove that
\begin{align*}
	\lim_{n \to \infty} \sigma_k(M_n) & = \lim_{n \to \infty} \theta_n \ \omega_\WP(l^1(h_n,h_n'),\cdot) \tag{Proposition \ref{prop:gradient_dual_volume_k}} \\
	& = \frac{1}{2} \omega_\WP(e_\mu, \cdot) \tag{Theorem \ref{thm:limit_landslides}} \\
	& = - \frac{1}{2} \omega_\WP(\cdot, e_\mu) \\
	& = - \frac{1}{2} \dd{(L_\mu)_m}(\cdot)
\end{align*}
where in the last step we applied Wolpert's formula \cite{wolpert1983on_the_symplectic}. By definition (see relation \eqref{eq:def_sigma}), the term $- \frac{1}{2} \dd{(L_\mu)_m}$ coincides with $\sigma(M)$, which concludes the proof of the statement.
\end{proof}


\emergencystretch=1em

\printbibliography[heading=bibintoc,title={References}]

\end{document}